\documentclass[12pt]{amsart}

\usepackage{amscd,latexsym,amsthm,amsfonts,amssymb,amsmath,amsxtra}
\usepackage[mathscr]{eucal}
\usepackage{hyperref}
\pagestyle{plain}
\setcounter{secnumdepth}{2}

\usepackage{xcolor}

\pagestyle{headings}
\renewcommand\theequation{\thesection.\arabic{equation}}

\UseRawInputEncoding

\newcommand{\BA}{{\mathbb {A}}}

\newcommand{\BC}{{\mathbb {C}}}

\newcommand{\BQ}{{\mathbb {Q}}}
\newcommand{\BR}{{\mathbb {R}}}

\newcommand{\BZ}{{\mathbb {Z}}}

\newcommand{\CA}{{\mathcal {A}}}

\newcommand{\CF}{{\mathcal {F}}}

\newcommand{\CM}{{\mathcal {M}}}

\newcommand{\CO}{{\mathcal {O}}}

\newcommand{\Fp}{{\mathfrak {p}}}

\newcommand{\RG}{{\mathrm {G}}}

\newcommand{\RO}{{\mathrm {O}}}

\newcommand{\RU}{{\mathrm {U}}}

\newcommand{\bs}{\backslash}

\newcommand{\cusp}{{\mathrm{cusp}}}

\newcommand{\disc}{{\mathrm{disc}}}

\newcommand{\GL}{{\mathrm{GL}}}

\newcommand{\Ind}{{\mathrm{Ind}}}

\newcommand{\SL}{{\mathrm{SL}}}

\newcommand{\SO}{{\mathrm{SO}}}

\newcommand{\Sym}{{\mathrm{Sym}}}

\newcommand{\Sp}{{\mathrm{Sp}}}

\newcommand{\wt}{\widetilde}

\newcommand{\ul}{\underline}

\def\bks{{\backslash}}

\newtheorem{thm}{Theorem}[section]
\newtheorem{cor}[thm]{Corollary}
\newtheorem{lem}[thm]{Lemma}
\newtheorem{prop}[thm]{Proposition}
\newtheorem {conj}[thm]{Conjecture}

\newtheorem {ques/conj}[thm]{Question/Conjecture}

\newtheorem{defn}[thm]{Definition}
\newtheorem{rmk}[thm]{Remark}

\makeatletter

\newcommand{\Rmnum}[1]{\expandafter\@slowromancap\romannumeral #1@}
\makeatother

\begin{document}
\renewcommand{\theequation}{\arabic{equation}}
\numberwithin{equation}{section}

\title[Wave Front Sets of Global A-Packets: Upper Bound]
{On Wave Front Sets of Global Arthur Packets of Classical Groups: Upper Bound}

\author{Dihua Jiang}
\address{School of Mathematics, University of Minnesota, Minneapolis, MN 55455, USA}
\email{dhjiang@math.umn.edu}

\author{Baiying Liu}
\address{Department of Mathematics\\
Purdue University\\
West Lafayette, IN, 47907, USA}
\email{liu2053@purdue.edu}

%    General info
\subjclass[2000]{Primary 11F70, 22E50; Secondary 11F85, 22E55}

\date{December 1st, 2021}

%\dedicatory{}

\keywords{Endoscopic Classification, Arthur Packet, Fourier Coefficient, Unipotent Orbit,  Automorphic Form, Wave Front Set}

\thanks{The research of the  first-named author is partially supported by the NSF Grants DMS-1901802. 
The research of the  second-named author is partially supported by the NSF Grants DMS-1702218, DMS-1848058, and by start-up funds from the Department of Mathematics at Purdue University}

\begin{abstract}
We prove a conjecture of the first-named author (\cite{J14}) on the upper bound Fourier coefficients of automorphic forms in Arthur packets of all classical groups over any number field. 
This conjecture generalizes the global version of the local tempered $L$-packet conjecture of F. Shahidi (\cite{Sh90} and \cite{Sh10}).
\end{abstract}

\maketitle

%\tableofcontents

\section{Introduction}
In the classical theory of automorphic forms, Fourier coefficients encode abundant arithmetic information of automorphic forms. In the modern theory of automorphic forms, i.e. the theory of automorphic representations of reductive algebraic groups defined over a number field $k$ (or a global field), Fourier coefficients bridge the connection from harmonic analysis to number theory via automorphic forms.
When the reductive group is $\GL_n$, the general linear group, by a classical theorem of I. Piatetski-Shapiro (\cite{PS79}) and J. Shalika  (\cite{S74}), every cuspidal automorphic representation of $\GL_n(\BA)$, where $\BA$ is the ring of adeles of $k$, has a non-zero Whittaker-Fourier coefficient. This fundamental result has been indispensable in the theory, 
especially the theory of automorphic $L$-functions. 
The theorem of Piatetski-Shapiro and Shalika has been extended to the discrete spectrum of $\GL_n(\BA)$ in \cite{JL13} and to the isobaric sum automorphic spectrum of $\GL_n(\BA)$ in \cite{LX21}. 

In general, due to the nature of the discrete spectrum of square-integrable automorphic forms on reductive algebraic groups $G$, one has to consider more general version of Fourier coefficients, i.e. Fourier coefficients of automorphic forms associated to nilpotent orbits in the Lie algebra $\frak{g}$ of $G$. Such general Fourier coefficients of automorphic forms, including Bessel-Fourier coefficients and Fourier-Jacobi coefficients have been
widely used in theory of automorphic $L$-functions via integral representation method (see \cite{GPSR97}, \cite{GJRS11}, \cite{JZ14} and \cite{JZ20}, for instance),
in automorphic descent method of Ginzburg, Rallis and Soudry to produce special cases of explicit Langlands functorial transfers (\cite{GRS11}),
and in the Gan-Gross-Prasad conjecture on vanishing of the central value of certain automorphic $L$-functions of symplectic type
(\cite{GJR04}, \cite{JZ20}, and \cite{GGP12}). More recent applications of such general Fourier coefficients to explicit constructions of endoscopy
transfers for classical groups can be found in \cite{J14} (and also in \cite{G12} for split classical groups).

%%%%%%%%%%%%%%%%%%%%%%%%%%%%%%%%%%%%%%%
%%%%%%%%%%%%%%%%%%%%%%%%%%%%%%%%%%%%%%%

In this paper, we consider following classical groups defined over $k$, $G_n=\Sp_{2n}, \SO_{2n+1}, \RO_{2n}^{\alpha}$, quasi-split, 
and $\RU_n$, quasi-split or inner forms. 
We follow the formulation in \cite{GGS17} for the definition of generalized Whittaker-Fourier coefficients of automorphic forms associated to nilpotent orbits, see Section 2 for details. 
It is well-known that nilpotent orbits of the quasi-split classical group $G_n$ are parameterized by symplectic or orthogonal partitions and certain quadratic forms when $G_n=\Sp_{2n}, \SO_{2n+1}, \RO_{2n}^{\alpha}$, by relevant partitions when $G_n=\RU_n$ (see \cite{CM93}, \cite{N11} and \cite{W01}, for instance). 
For any irreducible automorphic representation $\pi$
of $G_n(\BA)$, let $\mathfrak{n}(\pi)$ be the set of nilpotent orbits providing nonzero generalized Whittaker-Fourier coefficients for $\pi$, which is called the {\it wave front set} of $\pi$, 
as in \cite{JLS16} for instance. Let $\mathfrak{n}^m(\pi)$ be the subset that consists of maximal elements in $\mathfrak{n}(\pi)$ under the dominance ordering of nilpotent orbits, and denote by $\frak{p}^m(\pi)$ the set of the partitions of type $G_n$ corresponding to nilpotent orbits in $\mathfrak{n}^m(\pi)$. 

It is an interesting problem to determine the structure of the set $\mathfrak{n}^m(\pi)$ and equivalently the set $\frak{p}^m(\pi)$ for any given irreducible automorphic
representation $\pi$ of $G_n(\BA)$, by means of other invariants of $\pi$. When $\pi$ occurs in the discrete spectrum of square integrable automorphic functions on
$G_n(\BA)$, the global Arthur parameter attached to $\pi$ (\cite{Ar13}, \cite{Mok15}, \cite{KMSW14}) is clearly a fundamental invariant for $\pi$.
An important conjecture made in \cite{J14}, which is the natural generalization of the global version of the local tempered $L$-packet conjecture of F. Shahidi (\cite{Sh90} and \cite{Sh10}), asserts an intrinsic relation between the structure of the global Arthur parameter of $\pi$ to the structure of the set $\frak{p}^m(\pi)$. It is well-known that the conjecture of Shahidi and its global version 
(see \cite[Section 3]{JL16a} for discussion and proof) has played a fundamental role in the understanding of the local and global Arthur packets for generic Arthur parameters, according the endoscopic classification of J. Arthur (\cite{Ar13} and also \cite{Mok15} and \cite{KMSW14}).  It is well expected that the conjecture made in \cite{J14} for general global Arthur parameters will be important 
to the understanding of the structure of general global Arthur packets. 

To state the conjecture of \cite{J14}, for simplicity, we briefly recall the endoscopic classification
of the discrete spectrum for $G_n(\BA)$ from \cite{Ar13} for $G_n=\Sp_{2n}, \SO_{2n+1}, \RO_{2n}^{\alpha}$.

The set of global Arthur parameters for the discrete spectrum of $G_n$ is denoted, as in \cite{Ar13},
by $\wt{\Psi}_2(G_n)$, the elements of which are of the form
\begin{equation}\label{psin}
\psi:=\psi_1\boxplus\psi_2\boxplus\cdots\boxplus\psi_r,
\end{equation}
where $\psi_i$ are pairwise different simple global Arthur parameters of orthogonal type (when $G_n=\Sp_{2n}, \RO_{2n}^{\alpha}$) or symplectic type (when $G_n=\SO_{2n+1}$), 
and have the form $\psi_i=(\tau_i,b_i)$. The notations are explained in order. 
Let $\CA_\cusp(\GL_{a_i})$ be the set of equivalence classes of irreducible cuspidal automorphic representations of $\GL_{a_i}(\BA)$. 
We have $\tau_i\in\CA_\cusp(\GL_{a_i})$ with 
\[\sum_{i=1}^r a_ib_i
=
\begin{cases}
2n+1,&\quad {\rm when}\  G_n=\Sp_{2n};\\
2n,&\quad {\rm when}\  G_n=\SO_{2n+1}\ {\rm or}\ \RO_{2n}^{\alpha},
\end{cases}
\]
and the central character of $\psi$ and the central characters of $\tau_i$'s satisfy the following constraints:
\[
\prod_i \omega_{\tau_i}^{b_i} = 
\begin{cases}
1,&\quad {\rm when}\ G_n=\Sp_{2n}\ {\rm or}\ \SO_{2n+1};\\
 \eta_{\alpha},&\quad {\rm when}\ G_n=\RO_{2n}^{\alpha},
\end{cases}
\]
following \cite[Section 1.4]{Ar13}. More precisely, for each $1 \leq i \leq r$, $\psi_i=(\tau_i,b_i)$
satisfies the following conditions:
if $\tau_i$ is of symplectic type (i.e., $L(s, \tau_i, \wedge^2)$ has a pole at $s=1$), then $b_i$ is even (when $G_n=\Sp_{2n}, \RO_{2n}^{\alpha}$), or odd (when $G_n=\SO_{2n+1}$); and 
if $\tau_i$ is of orthogonal type (i.e., $L(s, \tau_i, \Sym^2)$ has a pole at $s=1$), then $b_i$ is odd (when $G_n=\Sp_{2n}, \RO_{2n}^{\alpha}$), or even (when $G_n=\SO_{2n+1}$). Given a global Arthur parameter $\psi$ as above, recall from \cite{J14} that $\ul{p}(\psi)=[(b_1)^{a_1} \cdots (b_r)^{a_r}]$ is the partition attached to $(\psi, G^{\vee}(\BC))$.

\begin{thm}[Theorem 1.5.2, \cite{Ar13}]\label{thm-Arthur}
For each global Arthur parameter $\psi\in\wt{\Psi}_2(G_n)$ a global Arthur
packet $\wt{\Pi}_\psi$ is defined. The discrete spectrum of $G_n(\BA)$ has the following decomposition
$$
L^2_\disc(G_n(k)\bks G_n(\BA))
\cong\oplus_{\psi\in\wt{\Psi}_2(G_{n})}m_\psi\left(\oplus_{\pi\in\wt{\Pi}_\psi(\epsilon_\psi)}\pi\right),
$$
where $\wt{\Pi}_\psi(\epsilon_\psi)$ denotes the subset of $\wt{\Pi}_\psi$ consisting of members which occur in the discrete spectrum, and $m_\psi$ is the discrete multiplicity of $\Pi$, which 
is either $1$ or $2$.
\end{thm}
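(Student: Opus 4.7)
The plan is to follow Arthur's stabilization-and-comparison strategy. First, I would set up two trace formulas in parallel: the stable trace formula for $G_n$ and the $\theta$-twisted trace formula for $\GL_N$ (with $N=\sum_i a_ib_i$), where $\theta$ is the outer involution whose twisted conjugacy classes on $\GL_N$ correspond to stable conjugacy classes in $G_n$. The spectral side of each expansion catalogs automorphic representations occurring in the (twisted) discrete spectrum, while the geometric side is a sum of orbital integrals. The goal is to compare the two spectral sides after passing to stable distributions, which will single out the pieces of $L^2_\disc(G_n(k)\bks G_n(\BA))$ indexed by $\psi\in\wt\Psi_2(G_n)$.

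Second, the comparison hinges on transfer of orbital integrals between $G_n$ and $\GL_N$. This is the content of the fundamental lemma (Ng\^o's theorem) together with smooth transfer for arbitrary test functions (reduced to the fundamental lemma by Waldspurger). With transfer in hand, each $\tau_i\in\CA_\cusp(\GL_{a_i})$ of symplectic or orthogonal type produces, via the twisted character of the $\theta$-Speh representation attached to $(\tau_i,b_i)$, a stable distribution on $G_n$. Summing over pairwise distinct $\psi_i=(\tau_i,b_i)$ yields the global Arthur parameters $\psi=\psi_1\boxplus\cdots\boxplus\psi_r$ in \eqref{psin}, together with the requisite sign/parity constraints on $b_i$ coming from the symplectic/orthogonal nature of $\tau_i$ and the group $G_n$.

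Third, I would construct the local A-packets $\wt\Pi_{\psi_v}$ place by place. At unramified finite places the packet is specified by the Satake parameter via the Arthur $\SL_2$-datum; at archimedean places Adams--Barbasch--Vogan theory provides the packets; at the remaining places Arthur characterizes them through local endoscopic character identities relative to the elliptic endoscopic groups of $G_n$. The global packet $\wt\Pi_\psi$ is then the restricted tensor product of local packets, the subset $\wt\Pi_\psi(\epsilon_\psi)$ is cut out by Arthur's sign character $\epsilon_\psi$ on the global component group $\ScS_\psi$, and the multiplicity $m_\psi\in\{1,2\}$ reflects the $\RO_{2n}^{\alpha}$ subtlety that the classification is carried out only up to the outer automorphism.

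The main obstacle is that the local and global assertions cannot be proved independently: they must be established simultaneously by an intricate double induction on the dimension $N$ and on the length $r$ of $\psi$. Local character identities feed into the stabilization of the global trace formula, and the global spectral comparison in turn distinguishes individual members of the local packets. Seeding the induction requires Moeglin--Waldspurger's stable twisted trace formula for $\GL_N$, the Kottwitz--Shelstad stabilization, Mezo's normalization of twisted transfer factors, and the weighted fundamental lemma of Chaudouard--Laumon. This argument occupies the bulk of Arthur's monograph, with parallel developments by Mok for quasi-split unitary groups and by Kaletha--Minguez--Shin--White for pure inner forms, which together yield the theorem in the generality stated.
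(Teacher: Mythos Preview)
The paper does not prove this statement at all: Theorem~\ref{thm-Arthur} is quoted verbatim from Arthur's monograph \cite{Ar13} (with the extensions to unitary groups in \cite{Mok15} and \cite{KMSW14}) and is used purely as input for the paper's own results on wave front sets. There is therefore no ``paper's own proof'' to compare your proposal against.

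That said, your outline is a fair high-level summary of the architecture of Arthur's argument---the parallel stabilization of the ordinary and $\theta$-twisted trace formulas, the role of the fundamental lemma and smooth transfer, the local--global induction on $N$ and on the length of $\psi$, and the origin of $m_\psi\in\{1,2\}$ in the outer automorphism of even orthogonal groups. As a description of strategy it is accurate; as a proof it is of course only a roadmap, since each of the steps you name (stabilization, construction of local packets, the sign character $\epsilon_\psi$, the simultaneous induction) requires hundreds of pages in \cite{Ar13} to carry out. For the purposes of this paper, however, none of that is needed: you should simply cite the theorem and move on.
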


As in \cite{J14}, one may call $\wt{\Pi}_\psi(\epsilon_\psi)$ the automorphic $L^2$-packet attached to $\psi$.
For $\pi\in\wt{\Pi}_\psi(\epsilon_\psi)$, the structure of the global Arthur parameter $\psi$ deduces constraints on
the structure of $\frak{p}^m(\pi)$, which is given by the following conjecture of the first-named author. 

\begin{conj}[Conjecture 4.2, Parts (1) and (2), \cite{J14}]\label{cubmfc}
For any $\psi\in\wt{\Psi}_2(G_n)$, let $\wt{\Pi}_{\psi}(\epsilon_\psi)$ be the automorphic $L^2$-packet attached to $\psi$.
Assume that $\underline{p}(\psi)$ is the partition attached to $(\psi,G^\vee(\BC))$. For any  $\pi\in\wt{\Pi}_{\psi}(\epsilon_\psi)$, if a partition $\ul{p}\in\frak{p}^m(\pi)$, then 
$$\ul{p}\leq \eta_{{\frak{g}^\vee,\frak{g}}}(\ul{p}(\psi)).$$
Here $\eta_{{\frak{g}^\vee,\frak{g}}}$ denotes the Barbasch-Vogan-Spaltenstein duality map from the partitions for the dual group $G^\vee(\BC)$ to
the partitions for $G$ as introduced in \cite{Sp82} and \cite{BV85}, and see also \cite{Ac03}.   
\end{conj}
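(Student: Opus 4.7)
The plan is to combine a local-global principle for generalized Whittaker-Fourier coefficients with the explicit residue-of-Eisenstein-series realization of members of $\wt{\Pi}_\psi(\epsilon_\psi)$. First I would recall that by the framework of \cite{GGS17}, non-vanishing of the Fourier coefficient attached to a nilpotent orbit $\CO_{\ul p}$ forces the corresponding twisted Jacquet module of each local component $\pi_v$ to be non-zero; this means the global set $\frak{p}^m(\pi)$ is simultaneously constrained by local data at every place. The approach is therefore to construct a global realization of a representative of $\pi$ and then systematically pull the Fourier coefficient back through this realization, converting the upper bound problem into a question about Fourier coefficients of the inducing data on general linear groups.

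The core input I would exploit is that, for a non-tempered Arthur parameter $\psi=\boxplus_{i=1}^r(\tau_i,b_i)$, each $\pi\in\wt{\Pi}_\psi(\epsilon_\psi)$ admits a realization (possibly iterated) as the residue of an Eisenstein series induced from a parabolic subgroup whose Levi contains $\GL_{a_ib_i}$ factors, with the inducing data built from the Speh representations $\Delta(\tau_i,b_i)$ of $\GL_{a_ib_i}(\BA)$ together with a small tempered piece on a classical group at the base. The key structural input on the $\GL$ side is that the wave front partition of $\Delta(\tau_i,b_i)$ equals exactly $[a_i^{b_i}]$, and that Fourier coefficients of $\Delta(\tau_i,b_i)$ indexed by partitions strictly larger than $[a_i^{b_i}]$ all vanish. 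Using the root-exchange and unipotent-conjugation techniques developed by Ginzburg-Rallis-Soudry in \cite{GRS11} for transferring Fourier coefficients across parabolic inductions, I would then show that a hypothetical non-zero Fourier coefficient of $\pi$ associated with a partition $\ul p\not\le\eta_{\frak{g}^\vee,\frak{g}}(\ul p(\psi))$ forces, after a suitable exchange of unipotent integrations, a non-zero Fourier coefficient on one of the Speh factors $\Delta(\tau_i,b_i)$ indexed by a partition strictly larger than $[a_i^{b_i}]$, which is a contradiction. The argument would proceed by induction on $\sum_i a_ib_i$ and on the number of non-tempered blocks $\{i:b_i>1\}$, with the generic case (established in \cite{JL16a}) as the base.

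The main obstacle is the combinatorial heart of the argument: verifying that the Barbasch-Vogan-Spaltenstein duality $\eta_{\frak{g}^\vee,\frak{g}}$ precisely captures the sharpest partition produced by the root-exchange pullback, uniformly across $G_n=\Sp_{2n}$, $\SO_{2n+1}$, $\RO_{2n}^\alpha$, and $\RU_n$. This requires translating, for each classical group type, the composition of collapse and transpose that defines $\eta_{\frak{g}^\vee,\frak{g}}$ into a description in terms of the partition $[(b_1)^{a_1}\cdots(b_r)^{a_r}]$ viewed through the inducing Levi data, and controlling the interaction of collapse with the symplectic or orthogonal constraints. Secondary obstacles include handling the various inner forms $\RU_n$ (where Arthur packets are labelled differently), verifying that the pullback respects the $\wt{\Pi}_\psi(\epsilon_\psi)$ constraint rather than abstract membership in $\wt{\Pi}_\psi$, and, where the direct Eisenstein realization is insufficient (for example at the transition between a cuspidal base packet and the residual layers), supplementing the argument with a descent-style construction to produce the required inducing data.
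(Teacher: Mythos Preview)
Your approach is genuinely different from the paper's, and there is a real gap.

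The paper does not use Eisenstein-series realizations or global root-exchange at all. Instead it pushes the local-global principle you mention in your first sentence all the way: since $\frak{p}^m(\pi)$ is bounded above by $\frak{p}^m(\pi_v)$ at every place $v$, it suffices to find a \emph{single} finite place $v$ where $\frak{p}^m(\pi_v)$ can be controlled. The paper chooses $v$ so that $G_n(k_v)$ is split and every $\tau_{i,v}$ has trivial central character (Proposition~\ref{propsq}); then $\pi_v$ is an unramified unitary representation of a particularly simple shape (``Type~(I)'' in \eqref{typeA}). The wave front set of such a representation is bounded using (i) the Mui\'c--Tadi\'c description of the unramified unitary dual, (ii) Okada's recent computation \cite{Oka21} of $\frak{p}^m(\sigma_{sn})$ for the relevant strongly negative spherical representations (Theorem~\ref{keylemma0}), (iii) the M{\oe}glin--Waldspurger formula for the wave front of a parabolic induction in terms of induced nilpotent orbits (Lemma~\ref{keylemma}), and (iv) a purely combinatorial identity showing that this induced-orbit bound coincides exactly with $\eta_{\frak{g}^\vee,\frak{g}}(\ul p(\psi))$ (Lemmas~\ref{keylemma2}--\ref{keylemma4}). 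The unitary case is dispatched by passing to a split place where $\RU_n$ becomes $\GL_n\times\GL_n$ and invoking \cite{MW87} directly.

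The gap in your plan is the step ``I would then show that a hypothetical non-zero Fourier coefficient \ldots\ forces \ldots\ a non-zero Fourier coefficient on one of the Speh factors indexed by a partition strictly larger than $[a_i^{b_i}]$.'' This is precisely what the root-exchange technology does \emph{not} deliver in full strength: as the introduction explicitly notes, that method in \cite{JL16} only produced the weaker bound $\ul p \le_L \eta_{\frak{g}^\vee,\frak{g}}(\ul p(\psi))$ in the \emph{lexicographic} order, not the dominance order required by Conjecture~\ref{cubmfc}. The difficulty is that negating $\ul p \le \ul q$ in dominance order yields only that some partial sum of $\ul p$ exceeds that of $\ul q$, and this information does not localize cleanly to a single Speh block under the unipotent exchange procedure. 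A second, equally serious gap is your assumption that every $\pi\in\wt{\Pi}_\psi(\epsilon_\psi)$ is realized as a residue of the Eisenstein series you describe; this is not established in general and is itself a hard open problem for non-generic members of the packet. The paper sidesteps both issues entirely by working at a well-chosen unramified local place and invoking Okada's theorem as the decisive new input.
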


Conjecture 4.2 in \cite{J14} consists of the upper-bound conjecture (Conjecture \ref{cubmfc}) and the sharpness conjecture (\cite[Conjecture 4.2, Part (3)]{J14}, i.e., there exists $\pi\in\wt{\Pi}_{\psi}(\epsilon_\psi)$ such that $\eta_{{\frak{g}^\vee,\frak{g}}}(\ul{p}(\psi)) \in \frak{p}^m(\pi)$). 
It is clear that if the global Arthur parameter $\psi$ is generic, then \cite[Conjecture 4.2]{J14} asserts that the corresponding global Arthur packet $\wt{\Pi}_{\psi}(\epsilon_\psi)$ contains a 
automorphic member that is generic, i.e. has a non-zero Whittaker-Fourier coefficient. This is the global version of the local tempered $L$-packet conjecture of Shahidi (\cite{Sh90}) and was proved in 
\cite[Section 3]{JL16a} by using automorphic descent of D. Ginzgurg, S. Rallis, and D. Soudry 
(\cite{GRS11}). The goal of this paper is to prove Conjecture \ref{cubmfc} for general global Arthur parameters. The sharpness conjecture is of global in nature and will be fully considered in future projects. 

In \cite{JL16}, using the method of local descent, we partially prove Conjecture \ref{cubmfc} for $G_n=\Sp_{2n}$, namely, for any  $\pi\in\wt{\Pi}_{\psi}(\epsilon_\psi)$, if a partition $\ul{p}\in\frak{p}^m(\pi)$, then 
$$\ul{p}\leq_L \eta_{{\frak{g}^\vee,\frak{g}}}(\ul{p}(\psi)),$$
under the lexicographical order. 
We refer to \cite[Section 4]{J14} for more discussion on this conjecture and related topics. 

In order to prove Conjecture \ref{cubmfc}, we study the structure of the unramified local components $\pi_v$ of $\pi$ and of the set $\frak{p}^m(\pi_v)$ which is defined similarly as $\frak{p}^m(\pi)$. 
Our discussion reduces the general situation to a special case of strongly negative 
unramified unitary representations of $G_n$ (see Section 3 for details). In 
such a special situation, the structure of the wave front set 
(Theorem \ref{keylemma0}) can be deduced as a special case from \cite[Theorem 1.5]{Oka21}. 

To be more precise, first, for the Arthur parameter $\psi=\boxplus_{i=1}^r (\tau_i,b_i)$, by \cite[Proposition 6.1]{JL16} (see Proposition \ref{propsq}), there exist infinitely many finite places $v$ such that $G_n(k_v)$ is split, $\tau_{i,v}$'s all have trivial central characters, and hence $\pi_v$ is the unramified component of an induced representation of the following form
$$\sigma=\times_{i=1}^r v^{\alpha_i} \chi_i({\det}_{m_i}) \rtimes \sigma_{sn},$$
where $0 \leq \alpha_i < 1$, 
$\sigma_{sn}$ is a special family of strongly negative representations which have Arthur parameters
of the form $\oplus_{j=1}^s 1_{W_F'} \otimes S_{2n_j+1}$ (see Section 3.2 for details), with $W_F'$ being the Weil-Deligne group and $n_1< n_2 < \cdots < n_s$. It is known that the wave front set of $\sigma$ (hence of $\pi_v$) is bounded above by the induced orbits once we know the leading orbits for the wave front set of $\sigma_{sn}$. On the other hand, Okada (\cite[Theorem 1.5]{Oka21} computed the leading orbits in the wave front set of those unramified representations whose Arthur parameters are trivial when restricting to the Weil-Deligne group. 

\begin{thm}[Main Theorem]\label{main}
Conjecture \ref{cubmfc} holds for any $\psi\in\wt{\Psi}_2(G_n)$.
\end{thm}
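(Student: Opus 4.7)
The overall plan is to reduce Conjecture \ref{cubmfc} to a purely local statement at a well-chosen unramified finite place and then invoke Okada's computation of the wave front set of certain strongly negative unramified representations. First, I would exploit the standard compatibility between global Fourier coefficients and local generalized Whittaker functionals: if a partition $\ul{p}$ occurs in $\mathfrak{p}^m(\pi)$, then at every place $v$ the local component $\pi_v$ supports a nonzero generalized Whittaker functional along the nilpotent orbit parameterized by $\ul{p}$, so $\ul{p}$ is dominated by some element of $\mathfrak{p}^m(\pi_v)$. It therefore suffices to prove the corresponding local bound at a single judiciously chosen place, namely that every $\ul{p}' \in \mathfrak{p}^m(\pi_v)$ satisfies $\ul{p}' \leq \eta_{\mathfrak{g}^\vee,\mathfrak{g}}(\ul{p}(\psi))$.

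To choose such a place, I would apply \cite[Proposition 6.1]{JL16} (see Proposition \ref{propsq}) to produce infinitely many finite places $v$ at which $G_n(k_v)$ is split, every $\tau_{i,v}$ is unramified with trivial central character, and $\pi_v$ is unramified. At such $v$, the classification of the unramified unitary dual realizes $\pi_v$ as the unique unramified subquotient of a standard module
\[
\sigma \;=\; \times_{i=1}^{r} v^{\alpha_i}\chi_i(\det_{m_i}) \rtimes \sigma_{sn}, \qquad 0 \leq \alpha_i < 1,
\]
where $\sigma_{sn}$ is strongly negative and its Arthur parameter, restricted to the Weil-Deligne group $W_F'$, has the trivial form $\oplus_{j=1}^{s} 1_{W_F'} \otimes S_{2n_j+1}$. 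Monotonicity of the wave front set under parabolic induction then forces every maximal orbit contributing to $\mathfrak{p}^m(\pi_v)$ to be dominated by an orbit induced from the Levi $\GL_{m_1} \times \cdots \times \GL_{m_r} \times G_{n_0}$ whose $\GL$-factors carry the regular nilpotent and whose $G_{n_0}$-factor carries an element of $\mathfrak{n}^m(\sigma_{sn})$; I would then invoke \cite[Theorem 1.5]{Oka21} (see Theorem \ref{keylemma0}) to pin down these leading orbits explicitly from the partition $[1,2n_1+1,\ldots,2n_s+1]$ associated to $\sigma_{sn}$.

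The main obstacle will be the combinatorial identity required to identify the resulting induced partition with the Barbasch-Vogan-Spaltenstein dual $\eta_{\mathfrak{g}^\vee,\mathfrak{g}}(\ul{p}(\psi))$. This amounts to verifying that Spaltenstein-Lusztig duality is compatible, in the sense dictated by the Langlands data $(\chi_i, m_i, \alpha_i, \sigma_{sn})$, with the induction of partitions from the Levi to $G_n$; the verification must be carried out uniformly across the symplectic, odd-orthogonal, even-orthogonal, and unitary types, and the parity discrepancies between the blocks $[(b_i)^{a_i}]$ coming from $\tau_i$ of symplectic versus orthogonal type must be reconciled through the BVS-duality formalism.
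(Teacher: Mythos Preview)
Your proposal follows essentially the same route as the paper: reduce via Proposition~\ref{propsq} to an unramified place where $\pi_v$ is of Type~(I), bound $\mathfrak{p}^m(\pi_v)$ by the induced orbit using Lemma~\ref{keylemma} and Okada's Theorem~\ref{keylemma0}, and then verify the combinatorial identity matching the induced partition with $\eta_{\mathfrak{g}^\vee,\mathfrak{g}}(\ul{p}(\psi))$ (this is Theorem~\ref{main2}, proved via Lemmas~\ref{keylemma2}, \ref{keylemma3}, \ref{keylemma4}).

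Two points to correct. First, the orbit you feed into the induction on each $\GL_{m_i}$-factor is the \emph{zero} orbit $[1^{m_i}]$ (the wave front of a character $\chi(\det_{m_i})$), not the regular nilpotent; inducing from regular nilpotents would give the wrong, much larger bound. It is precisely the sum of the $[1^{m_i}]$'s that produces the transpose $[(\prod m_i)(\prod n_j)]^t$ appearing in the paper's formula. Second, for $G_n=\RU_n$ the paper does \emph{not} run the same combinatorial verification: instead it passes to a split place where $G_n(k_v)\cong\GL_n(k_v)\times\GL_n(k_v)$ and invokes the Tadi\'c classification together with M{\oe}glin--Waldspurger's $\GL_n$ wave front computation, noting that the BVS duality is simply the transpose in type~$A$. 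This shortcut avoids a separate unitary-type collapse analysis.
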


We remark that for non-quasi-split even orthogonal groups, once the Arthur classification being carried out (see \cite{CZ21a, CZ21b} for recent progress in this direction), Conjecture \ref{cubmfc} can be proved by similar arguments. 

In the last part of this paper, we study the wave front set of the unramified unitary dual for split classical groups $G_n=\Sp_{2n}, \SO_{2n+1}, \RO_{2n}$. Under a conjecture on the wave front set of negative representations (Conjecture \ref{conj-neg}), we are able to determine the set $\frak{p}^m(\pi)$ for general unramified unitary representations (Theorem \ref{main3}). This provides a reduction towards understanding the wave front set of the whole unramified unitary dual, which has its own interests. 

The structure of this paper is as follows. In Section 2, we recall certain twisted Jacquet modules and Fourier coefficients associated to nilpotent orbits, following the formulation in \cite{GGS17}. The structure of unramified unitary dual of $G_n(F_v)$
was determined by D. Barbasch in \cite{Bar10} and by G. Muic and M. Tadic in \cite{MT11} with different approaches. In Section 3, we recall
from \cite{MT11} the results on unramified unitary dual. In Section 4, we determine, for any given global Arthur parameter $\psi\in\wt{\Psi}_2(G_n)$,
the unramified components $\pi_v$ of any $\pi\in\wt{\Pi}_{\psi}(\epsilon_\psi)$ in terms of the classification data in \cite{MT11}, and prove Theorem \ref{main} by means of Theorem \ref{main2} which is about certain properties of $\ul{p} \in \frak{p}^m(\pi_v)$. 
Theorem \ref{main2} is technical and will be proved in Sections 5, 6, and 7, for $G_n=\Sp_{2n}, \SO_{2n+1}, \RO_{2n}^{\alpha}$, respectively. In Section 8, we determine the leading orbits in the wave front set of general unramified unitary representations assuming Conjecture \ref{conj-neg} for split classical groups $G_n=\Sp_{2n}, \SO_{2n+1}, \RO_{2n}$ (Theorem \ref{main3}). 

\subsection*{Acknowledgements} 
We would like to thank Freydoon Shahidi, Fan Gao, and Lei Zhang for helpful comments and suggestions. 

%%%%%%%%%%%%%%%%%%%%%%%%%%%%%%%%%%%%%%%%%%%%%%%%%%

\section{Fourier coefficients associated to nilpotent orbits}\label{def of FC's}

%%%%%%%%%%%%%%%%%%%%%%%%%%%%%%%%%%%%%%%%%%%%%%%%%%

In this section, we recall certain twisted Jacquet modules and Fourier coefficients associated to nilpotent orbits, following the formulation of R. Gomez, D. Gourevitch and S. Sahi in \cite{GGS17}.

Let $\RG$ be a reductive group defined over a field $F$ of characteristic zero, and $\mathfrak{g}$ be the Lie algebra of $G=\RG(F)$. 
Given any semi-simple element $s \in \mathfrak{g}$, under the adjoint action, $\mathfrak{g}$ is decomposed into a direct sum of eigenspaces $\mathfrak{g}^s_i$ corresponding to eigenvalues $i$.
The element
$s$ is called {\it rational semi-simple} if all its eigenvalues are in $\BQ$.
Given a nilpotent element $u$ and a semi-simple element $s$ in $\mathfrak{g}$, the pair $(s,u)$ is called a {\it Whittaker pair} if $s$ is a rational semi-simple element, and $u \in \mathfrak{g}^s_{-2}$. The element $s$ in a Whittaker pair $(s, u)$ is called a {\it neutral element} for $u$ if there is a nilpotent element $v \in \mathfrak{g}$ such that $(v,s,u)$ is an $\mathfrak{sl}_2$-triple. A Whittaker pair $(s, u)$ with $s$ being a neutral element is called a {\it neutral pair}. 

Given any Whittaker pair $(s,u)$, define an anti-symmetric form $\omega_u$ on $\mathfrak{g}\times \mathfrak{g}$ by 
\begin{align}\label{form}
 \omega_u(X,Y):=\kappa(u,[X,Y]),   
\end{align}
here $\kappa$ is the Killing form on $\mathfrak{g}$.
For any rational number $r \in \BQ$, let $\mathfrak{g}^s_{\geq r} = \oplus_{r' \geq r} \mathfrak{g}^s_{r'}$. 
 Let $\mathfrak{u}_s= \mathfrak{g}^s_{\geq 1}$ and let $\mathfrak{n}_{s,u}$ be the radical of $\omega_u |_{\mathfrak{u}_s}$. Then $[\mathfrak{u}_s, \mathfrak{u}_s] \subset \mathfrak{g}^s_{\geq 2} \subset \mathfrak{n}_{s,u}$. 
For any $X \in \mathfrak{g}$, let $\mathfrak{g}_X$ be the centralizer of $X$ in $\mathfrak{g}$.  
By \cite[Lemma 3.2.6]{GGS17}, one has $\mathfrak{n}_{s,u} = \mathfrak{g}^s_{\geq 2} + \mathfrak{g}^s_1 \cap \mathfrak{g}_u$.
Note that if the Whittaker pair $(s,u)$ comes from an $\mathfrak{sl}_2$-triple $(v,s,u)$, then $\mathfrak{n}_{s,u}=\mathfrak{g}^s_{\geq 2}$. We denote by 
$N_{s,u}=\exp(\mathfrak{n}_{s,u})$ the corresponding unipotent subgroup of $G$. 

When $F=k_v$ is a non-Archimedean local field, we take $\psi: F \rightarrow \BC^{\times}$ to be a fixed non-trivial additive character and define a character of $N_{s,u}$ by 
\begin{align}\label{psi-local}
    \psi_u(n)=\psi(\kappa(u,\log(n))).
\end{align}
Let $\sigma$ be an irreducible admissible representation of $G(F)$. 
The {\it twisted Jacquet module} of $\pi$ associated to
a Whittaker pair $(s,u)$ is defined to be  
$\sigma_{N_{s,u}, \psi_u}$.
% The {\it degenerate Whittaker model} of $\pi$ associated to 
% a Whittaker pair $(s,u)$ is defined to be  
% $\sigma_{N_{s,u}, \psi_u}$, the twisted Jacquet module of $\sigma$ with respect to the pair 
% $(N_{s,u}, \psi_u)$.
% If $(s,u)$ is a neutral pair, $\sigma_{N_{s,u}, \psi_u}$ is also called a {\it generalized Whittaker model} of $\sigma$.
Let $\mathfrak{n}(\sigma)$ be the set of nilpotent orbits $\CO\subset \mathfrak{g}$ such that the twisted Jacquet module $\sigma_{N_{s,u}, \psi_u}$ is non-zero for some neutral pair $(s,u)$ with $u \in \CO$. 

When $F=k$ is a number field, let $\BA$ be the ring of adeles, and let $\psi: F \bs \BA \rightarrow \BC^{\times}$ be a fixed non-trivial additive character.
Extend the killing form $\kappa$ to $\mathfrak{g}(\BA)\times \mathfrak{g}(\BA)$.
Define a character of $N_{s,u}(\BA)$ by 
\begin{align}\label{psi-global}
    \psi_u(n)=\psi(\kappa(u,\log(n))).
\end{align}
It is clear from the definition that the character $\psi_u(n)$ is trivial when restricted to the 
discrete subgroup $N_{s,u}(F)$, and hence can be viewed as a function on 
$[N_{s,u}]:=N_{s,u}(F)\bs N_{s,u}(\BA)$.
Let $\pi$ be an irreducible automorphic representation of $\RG(\BA)$. For any $\phi \in \pi$, the {\it degenerate Whittaker-Fourier coefficient} of $\phi$ attached to 
a Whittaker pair $(s,u)$ is defined to be
\begin{equation}\label{dwfc}
\CF_{s,u}(\phi)(g):=\int_{[N_{s,u}]} \phi(ng) \psi_u^{-1}(n) \, \mathrm{d}n\,.
\end{equation}
If $(s,u)$ is a neutral pair, then $\CF_{s,u}(\phi)$ is also called a {\it generalized Whittaker-Fourier coefficient} of $\phi$.
Define
\begin{align}\label{FC-pi}
    \CF_{s,u}(\pi):=\left\{\CF_{s,u}(\phi)\ |\ \phi \in \pi \right\},
\end{align}
which is called the Fourier coefficient of $\pi$. 
The {\it wave-front set} $\mathfrak{n}(\pi)$ of $\pi$ is defined to be the set of nilpotent orbits $\CO$ such that $\CF_{s,u}(\pi)$ is non-zero for some neutral pair $(s,u)$ with $u \in \CO$. 

Note that if $\sigma_{N_{s,u}, \psi_u}$, or $\CF_{s,u}(\pi)$, is non-zero for some neutral pair $(s,u)$ with $u \in \CO$, then it is non-zero for any such neutral pair $(s,u)$, since the non-vanishing property of such Whittaker models or Fourier coefficients does not depend on the choices of representatives of $\CO$.
Moreover, we let $\mathfrak{n}^m(\sigma)$ and $\mathfrak{n}^m(\pi)$ be the set of maximal elements in the wave front sets $\mathfrak{n}(\sigma)$ and $\mathfrak{n}(\pi)$, respectively,  under the natural ordering of nilpotent orbits (i.e., $\CO_1 \leq \CO_2$ if $\CO_1 \subset \overline{\CO_2}$, the Zariski closure of $\CO_2$).

In this paper, we mainly consider classical groups 
$G_n=\Sp_{2n}, \SO_{2n+1}$, $\RO_{2n}^{\alpha}$, quasi-split, 
and $\RU_n$, quasi-split or inner forms, 
and study the sets
$\frak{p}(\pi)$, or $\frak{p}(\sigma)$, partitions corresponding to the nilpotent orbits in the wave front sets,
for any irreducible automorphic representation $\pi$ of $G_n(\BA)$, which occurs in the discrete spectrum of $G_n(\BA)$ as displayed in Theorem \ref{thm-Arthur}, and for some unramified local components 
$\sigma=\pi_v$.

%%%%%%%%%%%%%%%%%%%%%%%%%%%%%%%%%%%%%%%%%%%%%%%%%%%%%

\section{Unramified Unitary Dual of split classical groups}\label{unramified unitary dual}

We take $F=k_v$ to be a non-Archimedean local field of $k$. 
In this section, we recall the classification of the unramified unitary dual of 
the split classical groups $G_n=\Sp_{2n}, \SO_{2n+1}, \RO_{2n}$ over $F$, which was obtained by
D. Barbasch in \cite{Bar10} and by G. Muic and M. Tadic in \cite{MT11}, using different methods. 

We recall the unramified unitary dual of split classical groups from the work of 
Muic and Tadic in \cite{MT11}. 
The classification in \cite{MT11} starts from classifying two special families of irreducible unramified representations of $G_n(F)$ that are called {\bf strongly negative} and {\bf negative}, respectively. We refer to \cite{M06} for definitions of strongly negative and negative
representations, respectively, and for more related discussion on those two families of unramified representations.
In the following, we recall from \cite{MT11} the classification of these two families in terms of {\bf Jordan blocks}. Their classification also provides explicit constructions of the two families of unramified representations.

A pair $(\chi, m)$, where $\chi$ is an unramified unitary
character of $F^*$ and $m \in \BZ_{>0}$, is called
a {\bf Jordan block}.
When $G_n=\Sp_{2n}, \RO_{2n}$, define $\rm{Jord}_{sn}(n)$ to be the collection of all sets
$\rm{Jord}$ of the following form:
\begin{equation}
\label{sec7equ1sp}
\{(\lambda_0, 2n_1+1), \ldots, (\lambda_0, 2n_k+1), (1_{\GL_1}, 2m_1+1),
\ldots, (1_{\GL_1}, 2m_l+1)\}
\end{equation}
where $\lambda_0$ is the unique non-trivial unramified
unitary character of $F^*$ of order 2,
given by the local Hilbert symbol $(\delta, \cdot)_{F^*}$, with $\delta$ being a non-square unit in $\CO_{F}$;
$k$ is even, and $l$ is odd when $G_n=\Sp_{2n}$ and is even when $G_n=\RO_{2n}$. There are also following constraints:
$$
0 < n_1 < n_2 < \cdots < n_k, \quad 0 < m_1 < m_2 < \cdots < m_l;
$$
and 
\[
\sum_{i=1}^k (2n_i+1) + \sum_{j=1}^l (2m_j+1) = 
\begin{cases}
2n+1&{\rm when}\ G_n=\Sp_{2n}\\
2n&{\rm when}\ G_n=\RO_{2n}.
\end{cases}
\]
When $G_n=\SO_{2n+1}$, define $\rm{Jord}_{sn}(n)$ to be the collection of all sets
$\rm{Jord}$ of the following form:
\begin{equation}
\label{sec7equ1so}
\{(\lambda_0, 2n_1), \ldots, (\lambda_0, 2n_k), (1_{\GL_1}, 2m_1),
\ldots, (1_{\GL_1}, 2m_l)\}
\end{equation}
where
$$
0 \leq n_1 < n_2 < \cdots < n_k, \ \ \ \ 0 \leq m_1 < m_2 < \cdots < m_l,
$$
both $k$ and $l$ are even 
and 
$\sum_{i=1}^k (2n_i) + \sum_{j=1}^l (2m_j) = 2n$. 

For each $\rm{Jord} \in \rm{Jord}_{sn}(n)$, we can associate a
representation $\sigma(\rm{Jord})$, which is the unique irreducible
unramified subquotient of the following induced
representation. When $G_n=\Sp_{2n}$, it is given by 
\begin{align}\label{sec7equ2-1}
\begin{split}
 & \nu^{\frac{n_{k-1}-n_k}{2}} \lambda_0({\det}_{n_{k-1}+n_k+1})
\times
\nu^{\frac{n_{k-3}-n_{k-2}}{2}} \lambda_0({\det}_{n_{k-3}+n_{k-2}+1})\\
&\qquad\times \cdots \times
\nu^{\frac{n_{1}-n_2}{2}} \lambda_0({\det}_{n_{1}+n_2+1})\\
&\qquad \times
\nu^{\frac{m_{l-1}-m_l}{2}} 1_{{\det}_{m_{l-1}+m_l+1}}
\times
\nu^{\frac{m_{l-3}-m_{l-2}}{2}} 1_{{\det}_{m_{l-3}+m_{l-2}+1}}\\
& \qquad\times \cdots \times
\nu^{\frac{m_{2}-m_3}{2}} 1_{{\det}_{m_{2}+m_3+1}}
\rtimes 1_{\Sp_{2m_1}}. 
\end{split}
\end{align}
When $G_n=\RO_{2n}$, it is given by 
\begin{align}\label{sec7equ2-2}
\begin{split}
 & \nu^{\frac{n_{k-1}-n_k}{2}} \lambda_0({\det}_{n_{k-1}+n_k+1})
\times
\nu^{\frac{n_{k-3}-n_{k-2}}{2}} \lambda_0({\det}_{n_{k-3}+n_{k-2}+1})\\
&\qquad \times \cdots \times
\nu^{\frac{n_{1}-n_2}{2}} \lambda_0({\det}_{n_{1}+n_2+1})\\
& \qquad\times
\nu^{\frac{m_{l-1}-m_l}{2}} 1_{{\det}_{m_{l-1}+m_l+1}}
\times
\nu^{\frac{m_{l-3}-m_{l-2}}{2}} 1_{{\det}_{m_{l-3}+m_{l-2}+1}}\\
&\qquad \times \cdots \times
\nu^{\frac{m_{1}-m_2}{2}} 1_{{\det}_{m_{1}+m_2+1}}
\rtimes 1_{\RO_0}. 
\end{split}
\end{align}
When $G_n=\SO_{2n+1}$, it is given by 
\begin{align}\label{sec7equ2-3}
\begin{split}
& \nu^{\frac{n_{k-1}-n_k}{2}} \lambda_0({\det}_{n_{k-1}+n_k})
\times
\nu^{\frac{n_{k-3}-n_{k-2}}{2}} \lambda_0({\det}_{n_{k-3}+n_{k-2}})\\
& \qquad\times \cdots \times
\nu^{\frac{n_{1}-n_2}{2}} \lambda_0({\det}_{n_{1}+n_2})\\
&\qquad \times
\nu^{\frac{m_{l-1}-m_l}{2}} 1_{{\det}_{m_{l-1}+m_l}}
\times
\nu^{\frac{m_{l-3}-m_{l-2}}{2}} 1_{{\det}_{m_{l-3}+m_{l-2}}}\\
&\qquad \times \cdots \times
\nu^{\frac{m_{1}-m_2}{2}} 1_{{\det}_{m_{1}+m_2}}
\rtimes 1_{\SO_1}.
\end{split}
\end{align}

\begin{thm}[Theorem 5-8, \cite{MT11}] \label{thm8}
Assume that $n > 0$. The map $\rm{Jord} \mapsto \sigma(\rm{Jord})$ defines a
one-to-one correspondence between the set
$\rm{Jord}_{sn}(n)$ to the set of all
irreducible strongly negative
unramified representations of $G_n(F)$.
\end{thm}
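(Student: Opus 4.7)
The plan is to proceed by induction on the rank $n$, treating $G_n = \Sp_{2n}$, $\SO_{2n+1}$, and $\RO_{2n}$ in parallel, via Jacquet-module analysis using Tadi\'c's structural formula $\mu^*$ together with Casselman-type criteria for strong negativity.

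First I would verify that the map is well-defined. For each $\rm{Jord} \in \rm{Jord}_{sn}(n)$, the representation constructed in \eqref{sec7equ2-1}, \eqref{sec7equ2-2}, or \eqref{sec7equ2-3} lies in an unramified principal series, since all twisting characters $\nu^{(n_{i-1}-n_i)/2}\lambda_0$ and $\nu^{(m_{j-1}-m_j)/2}$ are unramified and the base representation of the small classical group is trivial. Hence it contains a unique irreducible unramified subquotient $\sigma(\rm{Jord})$. To show $\sigma(\rm{Jord})$ is strongly negative, I would compute the central exponents of its Jacquet modules along every maximal parabolic via the Bernstein--Zelevinsky geometric lemma; the ordering constraints $n_1 < \cdots < n_k$ and $m_1 < \cdots < m_l$, combined with the specific Langlands shifts, force every relevant exponent to be strictly negative. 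Injectivity is then obtained by reversing this computation: the piece of the Jacquet module with the largest $\GL$-factor exponent comes uniquely from the outermost factor $\nu^{(n_{k-1}-n_k)/2}\lambda_0({\det}_{n_{k-1}+n_k+1})$ (or the corresponding $m$-factor), and peeling it off yields a strongly negative unramified representation corresponding to $\rm{Jord}$ with its top pair removed, so that the inductive hypothesis recovers $\rm{Jord}$ uniquely.

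Surjectivity is the core step. Given an arbitrary irreducible strongly negative unramified $\sigma$ on $G_n(F)$, I would show that there exist a maximal parabolic $P = MU$ with $M \cong \GL_a \times G_{n-a}$, an unramified unitary character $\chi \in \{1_{\GL_1}, \lambda_0\}$, a real number $x > 0$, and a strongly negative unramified $\sigma'$ on $G_{n-a}(F)$ such that
\[ \sigma \hookrightarrow \nu^x \chi({\det}_a) \rtimes \sigma'. \]
Strong negativity forces $x > 0$; unramified-ness of $\sigma$ together with unitarity of its central character restricts $\chi$ to be either trivial or the unique quadratic unramified character $\lambda_0$, as any other choice would produce a Jacquet-module exponent violating strong negativity. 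Applying the inductive hypothesis to $\sigma'$ gives $\rm{Jord}' \in \rm{Jord}_{sn}(n-a)$, and adjoining the appropriate pair to $\rm{Jord}'$ produces a set $\rm{Jord}$ with $\sigma = \sigma(\rm{Jord})$.

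The hard part is the combinatorial bookkeeping of the parities of $k$ and $l$ across the three group types. These parities are dictated by the central-character constraint (trivial for $\Sp_{2n}$ and $\SO_{2n+1}$; prescribed by the discriminant for $\RO_{2n}$) and by the requirement that the inductive descent terminates on $1_{\Sp_{2m_1}}$, $1_{\RO_0}$, or $1_{\SO_1}$. In particular, one must show that at each inductive step the maximal pair to be peeled off is unambiguously determined by $\sigma$, so that the reconstruction is well-defined; this requires a careful exponent-multiplicity analysis of $\mu^*(\sigma)$, supplemented by the Aubert involution and an adapted form of Casselman's square-integrability criterion for the strongly negative setting.
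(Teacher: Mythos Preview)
The paper does not prove this theorem; it merely quotes it from \cite{MT11} (Theorem 5-8 there) as background for the classification of the unramified unitary dual, so there is no ``paper's own proof'' to compare against. Your sketch is a plausible outline of how the result is established in \cite{MT11} and \cite{M06}---induction on rank via Jacquet-module analysis and the $\mu^*$ formula, peeling off extremal segments, and verifying the parity constraints---but none of that appears in the present paper.
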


The inverse of the map in Theorem \ref{thm8} is denoted
by $\sigma \mapsto \rm{Jord}(\sigma)$. Based on the classification in Theorem \ref{thm8}, 
irreducible negative unramified representations can be constructed from
irreducible
strongly negative unramified representations of smaller rank groups as follows.

\begin{thm}[Thereom 5-10, \cite{MT11}]\label{thm9}
For any sequence of pairs $(\chi_1, n_1), \ldots, (\chi_t, n_t)$ with
$\chi_i$ being unramified unitary characters of $F^*$ and
$n_i \in \BZ_{\geq 1}$, for $1 \leq i \leq t$,
and for a strongly negative representation $\sigma_{sn}$ of $G_{n'}(F)$
with $\sum_{i=1}^t n_i + n' = n$,
the unique irreducible unramified subquotient
of the following induced representation
\begin{equation}\label{thm9equ1}
\chi_1 ({\det}_{{n_1}}) \times \cdots \times \chi_t ({\det}_{{n_t}}) \rtimes \sigma_{sn}
\end{equation}
is negative and it is a subrepresentation.

Conversely, any irreducible negative
unramified representation
$\sigma_{neg}$ of $G_{n}(F)$ can be obtained from the above construction.
The data 
\[
(\chi_1, n_1), \ldots, (\chi_t, n_t)
\]
and $\sigma_{sn}$
are unique, up to permutations and taking inverses of $\chi_i$'s.
\end{thm}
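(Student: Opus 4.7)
The plan is to prove both directions using Casselman's criterion for negativity formulated in terms of central exponents of Jacquet modules, combined with a careful analysis via the Bernstein--Zelevinsky geometric lemma, and to recover uniqueness from the rigidity of Satake parameters.

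For the forward direction, I would show that the unique irreducible unramified subquotient $\sigma_{neg}$ of \eqref{thm9equ1} is negative. Recall that negativity of an irreducible admissible representation is characterized by a condition on the central exponents of its Jacquet modules with respect to every standard parabolic $P=MN$ whose Levi is a product of general linear groups and a smaller classical group: each such exponent must lie in the closed positive chamber (so that the representation lives at the edge of the tempered region). By the geometric lemma applied to \eqref{thm9equ1}, every irreducible subquotient of $\sigma_{neg,N}$ is built from Jacquet modules of the unitary characters $\chi_i({\det}_{n_i})$ (whose central exponents have real part $0$) and of $\sigma_{sn}$ (which is strongly negative by hypothesis, so its exponents lie in the closed chamber). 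Since the contributions add and the inducing data contributes nothing to the real part, the resulting exponents lie in the desired chamber, proving negativity. To upgrade ``subquotient'' to ``subrepresentation,'' I would invoke the fact that, for inducing data with vanishing real part, the standard intertwining operators can be normalized so that the unramified Langlands-type constituent is realized as a submodule; in the unramified setting this is equivalent to showing that the spherical vector survives in the socle, which follows from Frobenius reciprocity together with the embedding $\sigma_{neg} \hookrightarrow \mathrm{Ind}(\chi_1({\det}_{n_1}) \otimes \cdots \otimes \chi_t({\det}_{n_t}) \otimes \sigma_{sn})$ obtained from the Jacquet module analysis.

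For the converse, given an irreducible negative unramified representation $\sigma_{neg}$ of $G_n(F)$, I would run a descent argument. If $\sigma_{neg}$ is strongly negative, there is nothing to do: take $t=0$ and invoke Theorem \ref{thm8}. Otherwise, negativity but not strong negativity forces the Jacquet module with respect to some maximal parabolic of type $\GL_m \times G_{n-m}$ to contain an irreducible summand of the form $\chi({\det}_m) \otimes \sigma'$, with $\chi$ unramified unitary and $\sigma'$ irreducible unramified negative on $G_{n-m}(F)$. By Frobenius reciprocity, $\sigma_{neg}$ embeds into $\chi({\det}_m) \rtimes \sigma'$. Iterating on $\sigma'$, the rank strictly decreases at each stage, so the procedure terminates at an irreducible unramified representation $\sigma_{sn}$ from which no further $\chi({\det}_{\bullet})$-piece can be split off, and one verifies that this non-splittability combined with negativity is precisely the definition of being strongly negative. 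Theorem \ref{thm8} then associates to $\sigma_{sn}$ its unique Jordan block data.

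For uniqueness of the data $\{(\chi_i,n_i)\}$ and $\sigma_{sn}$, I would argue via Satake parameters: the unramified infinitesimal character of the representation in \eqref{thm9equ1} is the multiset union of the Satake parameters of the $\chi_i({\det}_{n_i})$ and of $\sigma_{sn}$. The Jordan blocks of $\sigma_{sn}$ are determined by Theorem \ref{thm8}, and they are distinguished inside the total multiset as precisely those segments which cannot be peeled off as a unitary GL-induction without violating negativity. The remaining segments give the unordered tuple of pairs $(\chi_i,n_i)$, well-defined up to $\chi \leftrightarrow \chi^{-1}$ because $\chi({\det}_n) \rtimes \tau$ and $\chi^{-1}({\det}_n) \rtimes \tau$ have the same irreducible unramified subquotients on the classical group side.

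The step I expect to be the main obstacle is the assertion in the forward direction that the unramified constituent is actually a subrepresentation, not merely a subquotient. This requires ruling out intermediate composition factors at the boundary of the unitary region and amounts to a reducibility analysis of induction from unitary unramified characters on classical groups. The cleanest route is an induction on $t$ using the known reducibility points for $\chi({\det}_n) \rtimes \sigma_{sn}$ with $\chi$ unramified unitary, together with the observation that at unitary parameters the relevant standard intertwining operators are holomorphic and their images realize the desired embedding.
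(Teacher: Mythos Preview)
The paper does not prove this statement: it is quoted verbatim as Theorem~5-10 of \cite{MT11} and used as a black box, with no argument given. So there is nothing in the paper to compare your proposal against.

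That said, your outline is broadly in the spirit of how Mui\'c and Tadi\'c actually argue in \cite{MT11} and the companion paper \cite{M06}: the definition of negative and strongly negative representations there is precisely a Casselman-type condition on exponents of Jacquet modules, and the classification proceeds by peeling off unitary $\GL$-blocks via Frobenius reciprocity until one reaches a strongly negative core. Two points in your sketch would need tightening to make an actual proof. First, in the forward direction you argue about the exponents of the full induced representation via the geometric lemma, but what you need is the exponents of the specific irreducible unramified subquotient; you must explain why no cancellation or rearrangement in passing to a subquotient can push an exponent out of the closed chamber. Second, you correctly flag the ``subrepresentation, not merely subquotient'' assertion as the crux, but your proposed fix via holomorphy of intertwining operators at unitary parameters is not quite enough on its own: holomorphy gives a nonzero map, but you still need that the spherical vector lies in its image (equivalently, that the spherical function does not vanish), which in \cite{MT11} is handled by an explicit unramified computation together with the structure of $\Jord(\sigma_{sn})$ from Theorem~\ref{thm8}.
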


For any irreducible negative unramified representation $\sigma_{neg}$
with data in Theorem \ref{thm9}, we define
\begin{equation*}
\rm{Jord}(\sigma_{neg}) = \rm{Jord}(\sigma_{sn}) \cup \{(\chi_i,n_i),
(\chi_i^{-1},n_i) | 1 \leq i \leq t\}.
\end{equation*}
By Corollary 3.8 of \cite{M07},
any irreducible negative representation is unitary. In
particular, we have the following
\begin{cor}\label{cor:unitary}
Any irreducible negative unramified representation of $G_{n}(F)$ is
unitary.
\end{cor}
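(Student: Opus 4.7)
The plan is to derive this unitarity statement directly from the structural Theorem~\ref{thm9} by realizing $\sigma_{neg}$ as a subrepresentation of a parabolically induced module whose inducing data is unitary.

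First, by Theorem~\ref{thm9}, any irreducible negative unramified representation $\sigma_{neg}$ of $G_n(F)$ embeds as an irreducible subrepresentation of
$$I := \chi_1({\det}_{n_1}) \times \cdots \times \chi_t({\det}_{n_t}) \rtimes \sigma_{sn},$$
where each $\chi_i$ is an unramified \emph{unitary} character of $F^{\times}$ and $\sigma_{sn}$ is a strongly negative unramified representation of a classical group of smaller rank of the same type. The strategy is therefore to (a) exhibit a $G_n(F)$-invariant positive-definite Hermitian form on $I$, and (b) restrict it to the invariant subspace $\sigma_{neg}\hookrightarrow I$.

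For step~(a), each factor $\chi_i\circ{\det}_{n_i}$ is manifestly a unitary character of $\GL_{n_i}(F)$, so the substantive point is unitarity of $\sigma_{sn}$. By the explicit construction in \eqref{sec7equ2-1}--\eqref{sec7equ2-3} coming from Theorem~\ref{thm8}, the Jordan block data forces the exponents $\tfrac{n_{i-1}-n_i}{2}$ and $\tfrac{m_{j-1}-m_j}{2}$ to be strictly negative, which is precisely the shape of a Langlands (standard-module) datum whose unique unramified subquotient is a discrete series. One would verify this by computing the Jacquet modules of the induced representation along all standard parabolics and checking Casselman's square-integrability criterion, using the combinatorics of the Moeglin--Tadi\'c segments and the orthogonality/symplectic constraint on the multiplicities $k,\ell$. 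Once $\sigma_{sn}$ is seen to be square-integrable, it is tempered and in particular unitary. This is the substantive content and is the result of Corollary~3.8 of~\cite{M07}. With unitary inducing data in hand, the normalized parabolic induction produces a $G_n(F)$-invariant positive-definite Hermitian form on $I$ by the standard $L^2$-construction on the compact quotient of the unipotent radical.

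For step~(b), the restriction of a positive-definite invariant Hermitian form to any invariant subspace is both positive-definite and invariant; applied to $\sigma_{neg}\hookrightarrow I$ this yields an invariant inner product on $\sigma_{neg}$, proving the corollary. The main obstacle in this plan is step~(a), namely the verification that strongly negative representations are unitary: the Jacquet-module computation needed to apply Casselman's criterion to the inductive data~\eqref{sec7equ2-1}--\eqref{sec7equ2-3} is delicate and must interact correctly with Moeglin--Tadi\'c's discrete-series classification. Everything else in the argument is formal Hilbert-space theory.
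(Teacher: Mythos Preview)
Your argument is correct, but it is more elaborate than what the paper does, and the extra work is ultimately circular given the reference you invoke. The paper's proof is a one-line citation: Corollary~3.8 of \cite{M07} asserts that \emph{every} irreducible negative representation (not just the strongly negative ones, and not just the unramified ones) is unitary; the present corollary is simply the specialization to the unramified case.

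Your route---embed $\sigma_{neg}$ into $I$ via Theorem~\ref{thm9}, prove $\sigma_{sn}$ unitary, apply unitary parabolic induction, then restrict the inner product to the subrepresentation---is valid, and your intuition about the mechanism is exactly right: in Mui\'c's terminology, ``strongly negative'' is defined precisely by the strict Casselman criterion, so those representations are square-integrable, hence unitary; ``negative'' corresponds to the non-strict criterion, i.e.\ tempered. But when you then cite \cite{M07}, Corollary~3.8 for the unitarity of $\sigma_{sn}$, you are invoking a result that already gives unitarity of $\sigma_{neg}$ directly, making the detour through unitary induction superfluous. If instead you carried out the Casselman-criterion verification for $\sigma_{sn}$ independently (as you sketch), you would have a genuinely self-contained argument---but that verification is itself the content of Mui\'c's work, so you gain little. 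A minor wording issue: the invariant Hermitian form on $I$ comes from integrating over $K$ (or equivalently over $P\backslash G$), not over ``the compact quotient of the unipotent radical.''
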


To describe the general unramified unitary dual, we need to recall the following definition.

\begin{defn}[Definition 5-13, \cite{MT11}] \label{def1}
Let $\CM^{unr}(n)$ be the set of pairs
$(\textbf{e}, \sigma_{neg})$, where $\textbf{e}$ is a
multiset of triples $(\chi, m, \alpha)$ with $\chi$ being an unramified
unitary character of $F^*$, $m \in \BZ_{>0}$ and $\alpha \in \BR_{>0}$, and
$\sigma_{neg}$ is an irreducible negative unramified
representation of $G_{n''}(F)$, having the property that $\sum_{(\chi, m)} m \cdot \#{\textbf{e}(\chi,m)} + n'' = n$
with $\textbf{e}(\chi,m) = \{\alpha | (\chi, m, \alpha) \in \textbf{e}\}$.
Note that $\alpha \in \textbf{e}(\chi,m)$ is counted with multiplicity.

Let $\CM^{u,unr}(n)$ be the subset of $\CM^{unr}(n)$ consisting of pairs
$(\textbf{e}, \sigma_{neg})$, which satisfy the following conditions:
\begin{enumerate}
\item If $\chi^2 \neq 1_{\GL_1}$, then $\textbf{e}(\chi,m) = \textbf{e}(\chi^{-1},m)$,
and $0 < \alpha < \frac{1}{2}$, for all $\alpha \in \textbf{e}(\chi,m)$.
\item If $\chi^2 = 1_{\GL_1}$, and $m$ is even, then
$0 < \alpha < \frac{1}{2}$, for all $\alpha \in \textbf{e}(\chi,m)$, when $G_n=\Sp_{2n}, \RO_{2n}$; $0 < \alpha < 1$, for all $\alpha \in \textbf{e}(\chi,m)$, when $G_n=\SO_{2n+1}$.
\item If $\chi^2 = 1_{\GL_1}$, and $m$ is odd, then
$0 < \alpha < 1$, for all $\alpha \in \textbf{e}(\chi,m)$, when $G_n=\Sp_{2n}, \RO_{2n}$; $0 < \alpha < \frac{1}{2}$, for all $\alpha \in \textbf{e}(\chi,m)$, when $G_n=\SO_{2n+1}$.
\end{enumerate}
Write elements in $\textbf{e}(\chi,m)$ as follows:
\begin{equation*}
0 < \alpha_1 \leq \cdots \leq \alpha_k \leq \frac{1}{2} < \beta_1 \leq
\cdots \leq \beta_l < 1,
\end{equation*}
with $k,l\in \BZ_{\geq 0}$. They satisfy the following conditions:
\begin{itemize}
\item[(a)] If $(\chi, m) \notin \rm{Jord}(\sigma_{neg})$,
then $k+l$ is even.
\item[(b)] If $k \geq 2$, $\alpha_{k-1} \neq \frac{1}{2}$.
\item[(c)] If $l \geq 2$, then $\beta_1 < \beta_2 < \cdots < \beta_l$.
\item[(d)] $\alpha_i + \beta_j \neq 1$, for any $1 \leq i \leq k$,
$1 \leq j \leq l$.
\item[(e)] If $l \geq 1$, then $\#\{i | 1-\beta_1 < \alpha_i  \leq \frac{1}{2}\}$
is even.
\item[(f)] If $l \geq 2$, then $\#\{i | 1-\beta_{j+1} < \alpha_i < \beta_j\}$
is odd, for any $1 \leq j \leq l-1$.
\end{itemize}
\end{defn}

\begin{thm}[Theorem 5-14, \cite{MT11}]\label{thm10}
The map
$$(\textbf{e}, \sigma_{neg}) \mapsto \times_{(\chi, m, \alpha) \in \textbf{e}}
v^{\alpha} \chi({\det}_m) \rtimes \sigma_{neg}$$
defines a one-to-one correspondence between
the set $\CM^{u,unr}(n)$ and the set of equivalence classes of all
irreducible unramified unitary representations of $G_{n}(F)$.
\end{thm}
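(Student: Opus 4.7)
The plan is to prove the classification by combining the Langlands classification for unramified representations with a complementary-series analysis anchored at the negative unramified representations already classified in Theorems \ref{thm8} and \ref{thm9}.

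First, I would establish that every irreducible unramified representation $\pi$ of $G_n(F)$ is the unique unramified Langlands quotient of a standard module of the form $\times_i v^{\alpha_i}\chi_i(\det_{m_i})\rtimes \sigma_{neg}$ with $\sigma_{neg}$ negative and all $\alpha_i \geq 0$. This follows by combining the Langlands classification with the structure theorem for negative representations: any inducing data can be re-grouped, using the Aubert-Zelevinsky involution and the Casselman criterion for temperedness, so that the ``tempered-modulo-center'' part is replaced by a genuinely negative piece, and the remaining exponents are strictly positive. Uniqueness of $(\textbf{e},\sigma_{neg})$ up to the obvious symmetries $(\chi,m,\alpha)\leftrightarrow(\chi^{-1},m,\alpha)$ would follow from tracking cuspidal support through the Bernstein-Zelevinsky classification on each $\GL$-block.

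Second, for a fixed $\sigma_{neg}$ I would parametrize unitarity by deforming the $\alpha$-parameters from $0$. At $\alpha=0$ the induced representation is unitary by Corollary \ref{cor:unitary} and preservation of unitarity under unitary parabolic induction. As one moves the parameters into the positive octant, the induced Hermitian form stays definite on the Langlands quotient as long as the induction remains irreducible. Reducibility points are controlled by zeros and poles of the Harish-Chandra $\mu$-function, which, for classical groups, were computed explicitly by M\oe glin-Tadi\'c in terms of the Jordan data $\rm{Jord}(\sigma_{neg})$. The sufficiency direction of the theorem — that each $(\textbf{e},\sigma_{neg})\in \CM^{u,unr}(n)$ produces a unitary representation — would then follow by checking that the conditions (1)-(3) together with (a)-(f) of Definition \ref{def1} cut out exactly a connected region containing $\alpha=0$ on which the induced module is irreducible; the Langlands quotient is then the full induction and unitarity is inherited by continuity.

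The main obstacle is the necessity direction: showing that any violation of (1)-(f) forces non-unitarity. I would attack this by signature tracking through Jantzen filtrations across each reducibility hyperplane, combined with an induction on the number of complementary-series segments in $\textbf{e}$. The ``pure-segment'' bounds (1)-(3) and (a)-(c) would be handled by the classical one-segment complementary-series calculation for $v^{\alpha}\chi(\det_m)\rtimes\sigma_{neg}$, with the parity constraint (a) coming from the requirement that the intertwining operator preserves the Hermitian structure. The ``mixed-segment'' conditions (d)-(f), governing the interaction between exponents $\alpha_i<\tfrac12$ and $\beta_j>\tfrac12$, are subtler: one must show that whenever some pair $(\alpha_i,\beta_j)$ straddles a reducibility locus of $v^{\alpha_i}\chi(\det_m)\times v^{\beta_j}\chi(\det_m)$, the induced signature changes sign. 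The combinatorial bookkeeping required to reduce the multi-segment case to the single-segment one — via repeated use of Tadi\'c's commuting-induction formula and the $R$-group — is what I expect to be the most technical and error-prone part of the argument.
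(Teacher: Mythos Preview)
The paper does not prove this statement at all: Theorem~\ref{thm10} is quoted verbatim from \cite[Theorem~5-14]{MT11} and used as a black box, with no argument given. So there is no ``paper's own proof'' to compare your proposal against.

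That said, your sketch is broadly in the spirit of the Muic--Tadic approach in \cite{MT11} (and of Mui\'c's earlier papers \cite{M06}, \cite{M07}): one reduces to negative representations as the ``base'' of the complementary series and then tracks unitarity/irreducibility along deformations of the exponents. Two cautions. First, your appeal to the Aubert--Zelevinsky involution and the Casselman criterion in the first step is not how the reduction to negative data is actually carried out in \cite{MT11}; they work directly with the unramified Langlands quotient and the combinatorics of unramified cuspidal support, and the uniqueness of $(\textbf{e},\sigma_{neg})$ comes from the explicit parametrization in Theorems~\ref{thm8}--\ref{thm9} rather than from Bernstein--Zelevinsky on $\GL$-blocks. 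Second, the necessity direction in \cite{MT11} is not done by Jantzen-filtration signature tracking; it relies on explicit intertwining-operator computations and, crucially, on the reduction to rank-one and rank-two cases specific to the unramified setting, together with Mui\'c's earlier classification of the non-unitary unramified dual \cite{M06}. Your proposed Jantzen-signature route might work in principle, but it is a genuinely different and substantially harder argument than what is actually in \cite{MT11}, and the ``mixed-segment'' conditions (d)--(f) are handled there by direct low-rank computation rather than by the general mechanism you describe.
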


In Sections 4-7, we will mainly consider the following type of unramified unitary representations:

\textbf{Type (I)}: An irreducible unramified unitary representations of $G_{n}(F)$ is called of {\bf Type (I)} if it is of the following form:
\begin{equation}\label{typeA}
\sigma = \times_{(\chi, m, \alpha) \in \textbf{e}}
v^{\alpha} \chi({\det}_m) \rtimes \sigma_{neg} \leftrightarrow (\textbf{e}, \sigma_{neg}),
\end{equation}
where $\sigma_{neg}$ is the unique irreducible negative unramified  subrepresentation
of the following induced representation
\begin{equation*}
\chi_1 ({\det}_{{n_1}}) \times \cdots \times \chi_t ({\det}_{{n_t}}) \rtimes \sigma_{sn},
\end{equation*}
with $\sigma_{sn}$ being the unique strongly negative unramified constituent of the following induced representation:
\begin{align*}
\begin{split}
G_n=\Sp_{2n}:\ & 
\nu^{\frac{m_{l-1}-m_l}{2}} 1_{{\det}_{m_{l-1}+m_l+1}}
\times
\nu^{\frac{m_{l-3}-m_{l-2}}{2}} 1_{{\det}_{m_{l-3}+m_{l-2}+1}}\\
& \qquad\times \cdots \times
\nu^{\frac{m_{2}-m_3}{2}} 1_{{\det}_{m_{2}+m_3+1}}
\rtimes 1_{\Sp_{2m_1}};\\
G_n=\RO_{2n}:\ & 
\nu^{\frac{m_{l-1}-m_l}{2}} 1_{{\det}_{m_{l-1}+m_l+1}}
\times
\nu^{\frac{m_{l-3}-m_{l-2}}{2}} 1_{{\det}_{m_{l-3}+m_{l-2}+1}}\\
& \qquad\times \cdots \times
\nu^{\frac{m_{1}-m_2}{2}} 1_{{\det}_{m_{1}+m_2+1}}
\rtimes 1_{\RO_0};\\
G_n=\SO_{2n+1}:\ & 
\nu^{\frac{m_{l-1}-m_l}{2}} 1_{{\det}_{m_{l-1}+m_l}}
\times
\nu^{\frac{m_{l-3}-m_{l-2}}{2}} 1_{{\det}_{m_{l-3}+m_{l-2}}}\\
& \qquad\times \cdots \times
\nu^{\frac{m_{1}-m_2}{2}} 1_{{\det}_{m_{1}+m_2}}
\rtimes 1_{\SO_1}.
\end{split}
\end{align*}

Next, we recall the following theorem, which is a special case of \cite[Theorem 1.5]{Oka21}. 
We remark that the spherical representations considered in \cite[Theorem 1.5]{Oka21} are those with Arthur parameters that are trivial on the Weil-Deligne group (see \cite[Page 5]{Oka21} for the setting), while general unramified representations have Arthur parameters that are trivial on the subgroup $I_F \times \SL_2(\BC)$, where $I_F$ is the inertia subgroup of the Weil group $W_F$. 

\begin{thm}[Theorem 1.5, \cite{Oka21}]\label{keylemma0}
Let $\sigma_{sn}$ is a strongly negative unramified representation of $G_n$. 
If $G_n=\Sp_{2n}, \RO_{2n}$ and if the Jordan Block of $\sigma_{sn}$ is of the form 
$$
\rm{Jord}(\sigma_{sn})=\{(1_{\GL_1}, 2m_1+1),
\ldots, (1_{\GL_1}, 2m_l+1)\},
$$
then the maximal partitions of the wave-front set of $\sigma_{sn}$ is given by 
$$\frak{p}^m(\sigma_{sn})=\{\eta_{\frak{g}_{n}^{\vee}, \frak{g}_{n}}([(2m_1+1)\cdots(2m_l+1)])\}.$$
If $G_n=\SO_{2n+1}$ and if  the Jordan Block of $\sigma_{sn}$ is of the form 
$$
\rm{Jord}(\sigma_{sn})=\{(1_{\GL_1}, 2m_1),
\ldots, (1_{\GL_1}, 2m_l)\},
$$
then the maximal partitions of the wave-front set of $\sigma_{sn}$ is given by
$$\frak{p}^m(\sigma_{sn})=\{\eta_{\frak{g}_{n}^{\vee}, \frak{g}_{n}}([(2m_1)\cdots(2m_l)])\}.$$
\end{thm}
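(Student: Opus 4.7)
The plan is to reduce Theorem \ref{keylemma0} to \cite[Theorem 1.5]{Oka21} by carefully translating the Jordan block data for $\sigma_{sn}$ into an Arthur parameter that is trivial on the Weil-Deligne factor $W_F'$, so that Okada's framework applies. First I would identify the Arthur parameter associated to $\sigma_{sn}$. By Theorem \ref{thm8} and the explicit construction in \eqref{sec7equ2-1}--\eqref{sec7equ2-3}, when the Jordan block of $\sigma_{sn}$ contains only pairs of the form $(1_{\GL_1}, 2m_j+1)$ (respectively $(1_{\GL_1}, 2m_j)$), the representation $\sigma_{sn}$ is the unramified constituent of a standard module whose inducing data involves only the trivial character. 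This identifies the local Arthur parameter of $\sigma_{sn}$ as
\[
\psi_{sn} = \bigoplus_{j=1}^l 1_{W_F'} \otimes S_{b_j},
\]
where $b_j = 2m_j+1$ in the symplectic/even orthogonal case and $b_j = 2m_j$ in the odd orthogonal case, and $S_b$ denotes the $b$-dimensional irreducible representation of $\SL_2(\BC)$. In particular, $\psi_{sn}|_{W_F'}$ is trivial, which is precisely the hypothesis under which Okada's theorem operates.

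Next I would invoke \cite[Theorem 1.5]{Oka21}. Okada's theorem computes the maximal nilpotent orbits in the wave front set of spherical representations attached to Arthur parameters trivial on $W_F'$, expressing these maximal orbits as the image, under the Barbasch–Vogan–Spaltenstein duality, of the partition read off from the $\SL_2(\BC)$-factors of the Arthur parameter. Applied to $\psi_{sn}$, the partition for the dual group $G_n^\vee(\BC)$ is exactly
\[
\underline{p}(\psi_{sn}) = [b_1 \, b_2 \cdots b_l],
\]
and Okada's result yields
\[
\mathfrak{p}^m(\sigma_{sn}) = \{\eta_{\mathfrak{g}_n^\vee,\mathfrak{g}_n}(\underline{p}(\psi_{sn}))\}.
\]
Substituting $b_j = 2m_j+1$ or $b_j = 2m_j$ according to the type of $G_n$ gives the claimed formula.

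The main work therefore lies in verifying that the parameter/Jordan block dictionary is compatible between the Muić–Tadić classification (Theorems \ref{thm8} and \ref{thm9}) and the Arthur-parameter conventions used in \cite{Oka21}. Concretely, one must check three compatibilities: (i) that the Langlands data \eqref{sec7equ2-1}--\eqref{sec7equ2-3} produce the unique unramified constituent of the induced representation whose Satake parameter matches $\psi_{sn}|_{W_F' \times \SL_2(\BC)}$; (ii) that the correspondence between the partition $[b_1 \cdots b_l]$ and the $L$-group partition used by Okada agrees, so that the same partition of $\mathrm{dim}\, G_n^\vee$ is being dualized on both sides; and (iii) that the duality map $\eta_{\mathfrak{g}^\vee,\mathfrak{g}}$ in the sense of \cite{Sp82, BV85, Ac03} coincides with the dual map Okada uses to describe the leading orbits.

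The hardest step will be (ii)--(iii), the bookkeeping of conventions for the symplectic/orthogonal duality, since in each of the cases $G_n = \Sp_{2n}, \SO_{2n+1}, \RO_{2n}^\alpha$ one must match orthogonal partitions for $G_n^\vee$ with symplectic/orthogonal partitions for $G_n$ through the correct collapse operation. Once this is carefully set up, the theorem follows by direct specialization of \cite[Theorem 1.5]{Oka21}, with no new representation-theoretic input needed beyond the classification recalled in this section.
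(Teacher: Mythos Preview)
Your proposal is correct and matches the paper's approach exactly: the paper does not give an independent proof of this statement but simply records it as a special case of \cite[Theorem 1.5]{Oka21}, observing (just before the statement) that the strongly negative representations with Jordan blocks involving only $1_{\GL_1}$ have Arthur parameters of the form $\oplus_j 1_{W_F'}\otimes S_{b_j}$, hence trivial on the Weil--Deligne factor. Your write-up is in fact more detailed than the paper's, which does not spell out the compatibility checks (i)--(iii) at all.
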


%%%%%%%%%%%%%%%%%%%%%%%%%%%%%%%%%%%%%%

\section{Arthur parameters and unramified local components}

In this section, in terms of the classification of the unramified unitary dual of 
$G_n$, we study the structure of the unramified local components $\sigma=\pi_v$ of an irreducible automorphic
representation $\pi=\otimes_v\pi_v$ of $G_n(\BA)$ belonging to an automorphic $L^2$-packet $\wt{\Pi}_{\psi}(\varepsilon_{\psi})$
for an arbitrary global Arthur parameter $\psi \in \wt{\Psi}_2(G_n)$.
Then, we prove Theorem \ref{main}. 
We first consider the cases of $G_n=\Sp_{2n}, \SO_{2n+1}, \RO_{2n}^{\alpha}$ and leave the case of $G_n=\RU_n$ to the end of the section. 

\subsection{Unramified structure of Arthur parameters}\label{ssec-USAP}
For a given global Arthur parameter $\psi \in \widetilde{\Psi}_2(G_{n})$, $\wt{\Pi}_{\psi}(\varepsilon_{\psi})$
is the corresponding automorphic $L^2$-packet.
It is clear that the irreducible unramified representations, which are the local components of $\pi\in\wt{\Pi}_{\psi}(\varepsilon_{\psi})$, are determined by
the local Arthur parameter $\psi_v$ at almost all unramified local places $v$ of $k$. 
We fix one of the members, $\pi \in \wt{\Pi}_{\psi}(\varepsilon_{\psi})$,
and describe the unramified local component $\pi_v$ at a finite local place $v$, where the local Arthur parameter
$$
\psi_v=\psi_{1,v}\boxplus\psi_{2,v}\boxplus\cdots\boxplus\psi_{r,v}
$$
is unramified, i.e. $\tau_{i,v}$ for $i=1,2,\cdots,r$ are all unramified, and $G_n(k_v)$ is split. 

We write $F=k_v$ and first consider the case of $G_n=\Sp_{2n}, \RO_{2n}^{\alpha}$.
Rewrite the global Arthur parameter $\psi$ as follows:
\begin{equation}
\psi = [\boxplus_{i=1}^k (\tau_i, 2b_i)] \boxplus [\boxplus_{j=k+1}^{k+l}
(\tau_j, 2b_j+1)] \boxplus [\boxplus_{s=k+l+1}^{k+l+2t+1} (\tau_s, 2b_s+1)],
\end{equation}
where $\tau_i \in \CA_{\cusp}(\GL_{2a_i})$ is of symplectic type for $1 \leq i \leq k$, and $\tau_j \in \CA_{\cusp}(\GL_{2a_j})$ and $\tau_s \in \CA_{\cusp}(\GL_{2a_s+1})$ are of orthogonal type for $k+1 \leq j \leq k+l$ and $k+l+1 \leq s \leq k+l+2t+1$.
Define
\begin{align*}
I&:=\{1,2,\ldots,k\},\\
J&:=\{k+1, k+2, \ldots, k+l\},\\
S&:=\{k+l+1, k+l+2, \ldots, k+l+2t+1\} 
\end{align*} 
Let $J_1$ be the subset of $J$ such that $\omega_{\tau_{j,v}}=1$, and $J_2=J \bs J_1$, that is, for $j \in J_2$, $\omega_{\tau_{j,v}}=\lambda_0$.
Let $S_1$ be the subset of $S$ such that $\omega_{\tau_{s,v}}=1$, and $S_2=S \bs S_1$, that is, for $s \in S_2$, $\omega_{\tau_{s,v}}=\lambda_0$.
From the definition of Arthur parameters, we can easily see that $\#\{J_2\} \cup \#\{S_2\}$ is even, which implies that $\#\{J_2\} \cup \#\{S_1\}$ is odd when $G_n=\Sp_{2n}$, is even when $G_n=\RO_{2n}$. The local unramified Arthur parameter $\psi_v$ has the following structures:
\begin{itemize}
\item For $i \in I$,
$$\tau_{i,v} = \times_{q=1}^{a_i} \nu^{\beta^i_q} \chi^i_q
\times_{q=1}^{a_i} \nu^{-\beta^i_q} \chi_q^{i,-1},$$
where $0 \leq \beta^i_q < \frac{1}{2}$, for $1 \leq q \leq a_i$, and $\chi^i_q$'s are unramified unitary
characters of $F^*$.

\item For $j \in J_1$,
$$\tau_{j,v} = \times_{q=1}^{a_j} \nu^{\beta^j_q} \chi^j_q
\times_{q=1}^{a_j} \nu^{-\beta^j_q} \chi_q^{j,-1},$$
where $0 \leq \beta^j_q < \frac{1}{2}$, for $1 \leq q \leq a_j$, and $\chi^j_q$'s are unramified unitary
characters of $F^*$.

\item For $j \in J_2$,
$$\tau_{j,v} = \times_{q=1}^{a_j-1} \nu^{\beta^j_q} \chi^j_q \times
\lambda_0 \times 1_{GL_1}
\times_{q=1}^{a_j-1} \nu^{-\beta^j_q} \chi_q^{j,-1},$$
where $0 \leq \beta^j_q < \frac{1}{2}$, for $1 \leq q \leq a_j$, and $\chi^j_q$'s are unramified unitary
characters of $F^*$.

\item For $s \in S_1$,
$$\tau_{s,v} = \times_{q=1}^{a_s} \nu^{\beta^s_q} \chi^s_q \times
1_{GL_1}
\times_{q=1}^{a_s} \nu^{-\beta^s_q} \chi_q^{s,-1},$$
where $0 \leq \beta^s_q < \frac{1}{2}$, for $1 \leq q \leq a_s$, and $\chi^s_q$'s are unramified unitary
characters of $F^*$.

\item For $s \in S_2$,
$$\tau_{s,v} = \times_{q=1}^{a_s} \nu^{\beta^s_q} \chi^s_q \times
\lambda_0
\times_{q=1}^{a_s} \nu^{-\beta^s_q} \chi_q^{s,-1},$$
where $0 \leq \beta^s_q < \frac{1}{2}$, for $1 \leq q \leq a_s$, and $\chi^s_q$'s are unramified unitary
characters of $F^*$.
\end{itemize}

We define
\begin{align*}
\rm{Jord}_1
= \ &\{(\lambda_0, 2b_j+1), j \in J_2; (\lambda_0, 2b_s+1), s \in S_2;\\
& (1_{GL_1}, 2b_j+1), j \in J_2; (1_{GL_1}, 2b_s+1), s \in S_1\}.
\end{align*}
Note that $\rm{Jord}_1$ is a multi-set.
Let $\rm{Jord}_2$ be a set consists of different Jordan blocks with odd multiplicities in
$\rm{Jord}_1$. Thus $\rm{Jord}_2$ has the form of \eqref{sec7equ1sp}. By Theorem \ref{thm8}, there is a corresponding irreducible strongly negative unramified representation $\sigma_{sn}$. 
Then we define the following Jordan blocks:
\begin{align*}
\rm{Jord}_I & = \{(\chi_q^i, 2b_i), (\chi_q^{i,-1}, 2b_i), i \in I, 1 \leq q \leq a_i, \beta_q^i=0\},\\
\rm{Jord}_{J_1} & = \{(\chi_q^j, 2b_j+1), (\chi_q^{j,-1}, 2b_j+1), j \in J_1, 1 \leq q \leq a_j, \beta_q^j=0\},\\
\rm{Jord}_{J_2} & = \{(\chi_q^j, 2b_j+1), (\chi_q^{j,-1}, 2b_j+1), j \in J_2, 1 \leq q \leq a_j-1, \beta_q^j=0\},\\
\rm{Jord}_{S_1} & = \{(\chi_q^s, 2b_s+1), (\chi_q^{s,-1}, 2b_s+1), s \in S_1, 1 \leq q \leq a_s, \beta_q^s=0\},\\
\rm{Jord}_{S_2} & = \{(\chi_q^s, 2b_s+1), (\chi_q^{s,-1}, 2b_s+1), s \in S_2, 1 \leq q \leq a_s, \beta_q^s=0\}.
\end{align*}
Finally, we define
$$
\rm{Jord}_3 = (\rm{Jord}_1 \bs \rm{Jord}_2) \cup \rm{Jord}_I \cup \rm{Jord}_{J_1} \cup \rm{Jord}_{J_2} \cup \rm{Jord}_{S_1} \cup \rm{Jord}_{S_2}.
$$ 
By Theorem \ref{thm9}, corresponding to the data $\rm{Jord}_3$ and $\sigma_{sn}$, there is an irreducible negative unramified presentation $\sigma_{neg}$.

Let
\begin{align*}
\textbf{e}_I & = \{(\chi_q^i, 2b_i, \beta_q^i), i \in I, 1 \leq q \leq a_i, \beta_q^i>0\},\\
\textbf{e}_{J_1} & = \{(\chi_q^j, 2b_j+1, \beta_q^j), j \in J_1, 1 \leq q \leq a_j, \beta_q^j>0\},\\
\textbf{e}_{J_2} & = \{(\chi_q^j, 2b_j+1, \beta_q^j), j \in J_2, 1 \leq q \leq a_j-1, \beta_q^j>0\},\\
\textbf{e}_{S_1} & = \{(\chi_q^s, 2b_s+1, \beta_q^s), s \in S_1, 1 \leq q \leq a_s, \beta_q^s>0\},\\
\textbf{e}_{S_2} & = \{(\chi_q^s, 2b_s+1, \beta_q^s), s \in S_2, 1 \leq q \leq a_s, \beta_q^s>0\}.
\end{align*}
Then we define
\begin{equation}\label{sete}
\textbf{e}=\textbf{e}_I \cup \textbf{e}_{J_1}\cup \textbf{e}_{J_2}
\cup \textbf{e}_{S_1} \cup \textbf{e}_{S_2}.
\end{equation}
Since the unramified component $\pi_v$ is unitary, we must have that
$(\textbf{e}, \sigma_{neg}) \in \CM^{u,unr}(n)$, and $\pi_v$ is exactly the irreducible unramified unitary representation $\sigma$ of $G_{n}(F)$ which corresponds to $(\textbf{e}, \sigma_{neg})$ as in Theorem \ref{thm10}.

Now we consider the case of $G_n=\SO_{2n+1}$.
Rewrite the global Arthur parameter $\psi$ as follows:
\begin{equation}
\psi = [\boxplus_{i=1}^k (\tau_i, 2b_i+1)] \boxplus [\boxplus_{j=k+1}^{k+l}
(\tau_j, 2b_j)] \boxplus [\boxplus_{s=k+l+1}^{k+l+2t+1} (\tau_s, 2b_s)],
\end{equation}
where $\tau_i \in \CA_{\cusp}(\GL_{2a_i})$ is of symplectic type for $1 \leq i \leq k$, $\tau_j \in \CA_{\cusp}(\GL_{2a_j})$ and $\tau_s \in \CA_{\cusp}(\GL_{2a_s+1})$ are of orthogonal type for $k+1 \leq j \leq k+l$ and $k+l+1 \leq s \leq k+l+2t+1$.
Similarly, we define
\begin{align*}
    I&:=\{1,2,\ldots,k\},\\
    J&:=\{k+1, k+2, \ldots, k+l\},\\
    S&:=\{k+l+1, k+l+2, \ldots, k+l+2t+1\}
\end{align*}  
Let $J_1$ be the subset of $J$ such that $\omega_{\tau_{j,v}}=1$, and $J_2=J \bs J_1$, that is, for $j \in J_2$, $\omega_{\tau_{j,v}}=\lambda_0$.
Let $S_1$ be the subset of $S$ such that $\omega_{\tau_{s,v}}=1$, and $S_2=S \bs S_1$, that is, for $s \in S_2$, $\omega_{\tau_{s,v}}=\lambda_0$. The local unramified Arthur parameter $\psi_v$ has the following structures:
\begin{itemize}
\item For $i \in I_1$,
$$\tau_{i,v} = \times_{q=1}^{a_i} \nu^{\beta^i_q} \chi^i_q
\times_{q=1}^{a_i} \nu^{-\beta^i_q} \chi_q^{i,-1},$$
where $0 \leq \beta^i_q < \frac{1}{2}$, for $1 \leq q \leq a_i$, and $\chi^i_q$'s are unramified unitary
characters of $F^*$.

\item For $j \in J_1$,
$$\tau_{j,v} = \times_{q=1}^{a_j} \nu^{\beta^j_q} \chi^j_q
\times_{q=1}^{a_j} \nu^{-\beta^j_q} \chi_q^{j,-1},$$
where $0 \leq \beta^j_q < \frac{1}{2}$, for $1 \leq q \leq a_j$, and $\chi^j_q$'s are unramified unitary
characters of $F^*$.

\item For $j \in J_2$,
$$\tau_{j,v} = \times_{q=1}^{a_j-1} \nu^{\beta^j_q} \chi^j_q \times
\lambda_0 \times 1_{GL_1}
\times_{q=1}^{a_j-1} \nu^{-\beta^j_q} \chi_q^{j,-1},$$
where $0 \leq \beta^j_q < \frac{1}{2}$, for $1 \leq q \leq a_j$, and $\chi^j_q$'s are unramified unitary
characters of $F^*$.

\item For $s \in S_1$,
$$\tau_{s,v} = \times_{q=1}^{a_s} \nu^{\beta^s_q} \chi^s_q \times
1_{GL_1}
\times_{q=1}^{a_s} \nu^{-\beta^s_q} \chi_q^{s,-1},$$
where $0 \leq \beta^s_q < \frac{1}{2}$, for $1 \leq q \leq a_s$, and $\chi^s_q$'s are unramified unitary
characters of $F^*$.

\item For $s \in S_2$,
$$\tau_{s,v} = \times_{q=1}^{a_s} \nu^{\beta^s_q} \chi^s_q \times
\lambda_0
\times_{q=1}^{a_s} \nu^{-\beta^s_q} \chi_q^{s,-1},$$
where $0 \leq \beta^s_q < \frac{1}{2}$, for $1 \leq q \leq a_s$, and $\chi^s_q$'s are unramified unitary
characters of $F^*$.
\end{itemize}

We define
\begin{align*}
\rm{Jord}_1
= \ &\{(\lambda_0, 2b_j), j \in J_2; (\lambda_0, 2b_s), s \in S_2;\\
& (1_{GL_1}, 2b_j), j \in J_2; (1_{GL_1}, 2b_s), s \in S_1\}.
\end{align*}
Note that $\rm{Jord}_1$ is a multi-set.
Let $\rm{Jord}_2$ be a set consists of different Jordan blocks with odd multiplicities in
$\rm{Jord}_1$. Thus $\rm{Jord}_2$ has the form of \eqref{sec7equ1sp}. By Theorem \ref{thm8}, there is a corresponding irreducible strongly negative unramified representation $\sigma_{sn}$. 
Then we define the following Jordan blocks:
\begin{align*}
\rm{Jord}_I & = \{(\chi_q^i, 2b_i+1), (\chi_q^{i,-1}, 2b_i+1), i \in I, 1 \leq q \leq a_i, \beta_q^i=0\},\\
\rm{Jord}_{J_1} & = \{(\chi_q^j, 2b_j), (\chi_q^{j,-1}, 2b_j), j \in J_1, 1 \leq q \leq a_j, \beta_q^j=0\},\\
\rm{Jord}_{J_2} & = \{(\chi_q^j, 2b_j), (\chi_q^{j,-1}, 2b_j), j \in J_2, 1 \leq q \leq a_j-1, \beta_q^j=0\},\\
\rm{Jord}_{S_1} & = \{(\chi_q^s, 2b_s), (\chi_q^{s,-1}, 2b_s), s \in S_1, 1 \leq q \leq a_s, \beta_q^s=0\},\\
\rm{Jord}_{S_2} & = \{(\chi_q^s, 2b_s), (\chi_q^{s,-1}, 2b_s), s \in S_2, 1 \leq q \leq a_s, \beta_q^s=0\}.
\end{align*}
Finally, we define
$$\rm{Jord}_3 = (\rm{Jord}_1 \bs \rm{Jord}_2) \cup \rm{Jord}_I \cup \rm{Jord}_{J_1} \cup \rm{Jord}_{J_2} \cup \rm{Jord}_{S_1} \cup \rm{Jord}_{S_2}.$$ By Theorem \ref{thm9}, corresponding to the data $\rm{Jord}_3$ and $\sigma_{sn}$, there is an irreducible negative unramified presentation $\sigma_{neg}$.

Let
\begin{align*}
\textbf{e}_I & = \{(\chi_q^i, 2b_i+1, \beta_q^i), i \in I, 1 \leq q \leq a_i, \beta_q^i>0\},\\
\textbf{e}_{J_1} & = \{(\chi_q^j, 2b_j, \beta_q^j), j \in J_1, 1 \leq q \leq a_j, \beta_q^j>0\},\\
\textbf{e}_{J_2} & = \{(\chi_q^j, 2b_j, \beta_q^j), j \in J_2, 1 \leq q \leq a_j-1, \beta_q^j>0\},\\
\textbf{e}_{S_1} & = \{(\chi_q^s, 2b_s, \beta_q^s), s \in S_1, 1 \leq q \leq a_s, \beta_q^s>0\},\\
\textbf{e}_{S_2} & = \{(\chi_q^s, 2b_s, \beta_q^s), s \in S_2, 1 \leq q \leq a_s, \beta_q^s>0\}.
\end{align*}
Then we define
\begin{equation}\label{sete2}
\textbf{e}=\textbf{e}_I \cup \textbf{e}_{J_1}\cup \textbf{e}_{J_2}
\cup \textbf{e}_{S_1} \cup \textbf{e}_{S_2}.
\end{equation}
Since the unramified component $\pi_v$ is unitary, we must have that
$(\textbf{e}, \sigma_{neg}) \in \CM^{u,unr}(n)$, and $\pi_v$ is exactly the irreducible unramified unitary representation $\sigma$ of $G_{n}(F)$ which corresponds to $(\textbf{e}, \sigma_{neg})$ as in Theorem \ref{thm10}.

\subsection{Proof of Theorem \ref{main}}\label{ssec-PThM}
The following result from \cite{JL16} is needed for the proof of Theorem \ref{main}. 

\begin{prop}[Proposition 6.1, \cite{JL16}]\label{propsq}
For any finitely many non-square elements $\alpha_i \notin k^*/(k^*)^2, 1 \leq i \leq t$,  there are infinitely many finite places $v$ such that $\alpha_i \in (k_v^*)^2$, for any $1 \leq i \leq t$.
\end{prop}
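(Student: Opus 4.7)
The plan is to apply Chebotarev's density theorem to the compositum of quadratic extensions $k(\sqrt{\alpha_i})$. This is a classical technique, and the only real content is organizing the setup correctly.

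First, I would form the finite Galois extension
\[
L := k\bigl(\sqrt{\alpha_1},\, \sqrt{\alpha_2},\, \ldots,\, \sqrt{\alpha_t}\bigr)
\]
of $k$. Since each $\alpha_i$ is a non-square in $k^*$, each individual extension $k(\sqrt{\alpha_i})/k$ is nontrivial of degree $2$, and the compositum $L/k$ is abelian with Galois group isomorphic to a nontrivial subgroup of $(\BZ/2\BZ)^t$. In particular, $L/k$ is a finite Galois extension.

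Next, I would invoke Chebotarev's density theorem applied to $L/k$: the set of finite places $v$ of $k$ that are unramified in $L$ and whose Frobenius conjugacy class is trivial (i.e., places splitting completely in $L$) has positive Dirichlet density $1/[L:k]$, and hence is infinite. Let $v$ be any such place and let $w$ be a place of $L$ above $v$. Because $v$ splits completely in $L$, the decomposition group is trivial and the completion satisfies $L_w = k_v$. Consequently, each $\sqrt{\alpha_i}$, viewed inside $L \subset L_w = k_v$, lies in $k_v$, which is exactly the statement that $\alpha_i \in (k_v^*)^2$ for every $1 \leq i \leq t$.

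There is essentially no obstacle here: the only minor care needed is to note that the places of $k$ ramified in $L$ are finite in number (they divide the relative discriminant of $L/k$), so excluding them does not affect the conclusion that infinitely many split places remain. All other ingredients are immediate consequences of Chebotarev and of the characterization of split places of a quadratic extension in terms of squares in the completion.
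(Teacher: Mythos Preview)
Your argument via Chebotarev applied to the compositum $L=k(\sqrt{\alpha_1},\ldots,\sqrt{\alpha_t})$ is correct and is the standard proof of this fact. Note that the present paper does not supply its own proof of this proposition; it simply quotes it as Proposition~6.1 of \cite{JL16}, so there is no in-paper argument to compare against beyond observing that your approach is the expected one.
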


Now we are going to prove Theorem \ref{main}. 
First we consider the cases of $G_n=\Sp_{2n}$, $\SO_{2n+1}$, $\RO_{2n}^{\alpha}$. 
Given any $\psi = \boxplus_{i=1}^r (\tau_i, b_i) \in \wt{\Psi}_2(G_{n})$. Assume that $\{\tau_{i_1}, \ldots, \tau_{i_q}\}$ is a multi-set of all the $\tau$'s with non-trivial central characters.
Since all $\tau_{i_j}$'s are self-dual, the central characters $\omega_{\tau_{i_j}}$'s are all quadratic characters, which are parametrized by global non-square elements. Assume that $\omega_{\tau_{i_j}}=\chi_{\alpha_{i_j}}$,
where $\alpha_{i_j} \in k^* / (k^*)^2$,
and $\chi_{\alpha_{i_j}}$ is the quadratic character given by the global Hilbert symbol $(\cdot, \alpha_{i_j})$.
Note that $\{\alpha_{i_1}, \ldots, \alpha_{i_q}\}$ is a multi-set.
By Proposition \ref{propsq}, there are infinitely many finite places $v$, such that $\alpha$ and $\alpha_{i_j}$'s are all squares in
$k_v$.
Therefore, for the given $\psi$,
there are infinitely many finite places $v$ such that $G_{n}(k_v)$ split and all $\tau_{i,v}$'s have trivial central characters. From
the discussion in Section 3, for any $\pi \in \wt{\Pi}_{\psi}(\epsilon_{\psi})$, there is a finite local place $v$ with such a property
that $\pi_v$ is an irreducible unramified unitary representation of \textbf{Type (I)} as in \eqref{typeA}.

%\begin{rmk}
We are going to discuss the connection with the classification of D. Barbasch in \cite{Bar10}. 

Assume first that $G_n=\Sp_{2n}, \RO_{2n}^{\alpha}$. If $\sigma$ is an irreducible unramified unitary representation of $G_{n}(k_v)$ corresponding to the pair $(\textbf{e}, \sigma_{neg}) \in \CM^{u,unr}(n)$, then the orbit $\check{\CO}$ corresponding to $\sigma$ in \cite{Bar10} is given by the following partition:
\begin{align}
[(\prod_{j=1}^t n_j^2) (\prod_{(\chi,m,\alpha) \in \textbf{e}} m^2) (\prod_{i=1}^k (2n_i+1))(\prod_{i=1}^l (2m_i+1))].
\end{align}
When $\pi_v$ is of {\bf Type (I)} as in \eqref{typeA}, the orbit $\check{\CO}$ corresponding to $\sigma=\pi_v$ in \cite{Bar10} is given by the following partition:
\begin{align}\label{partition-1}
    [(\prod_{j=1}^t n_j^2) (\prod_{(\chi,m,\alpha) \in \textbf{e}} m^2) (\prod_{i=1}^l (2m_i+1))]
\end{align}
which turns out to be $\ul{p}(\psi)$ exactly.

Assume now that $G_n=\SO_{2n+1}$. If $\sigma$ is an irreducible unramified unitary representation of $G_{n}(k_v)$ corresponding to the pair $(\textbf{e}, \sigma_{neg}) \in \CM^{u,unr}(n)$, then the orbit $\check{\CO}$ corresponding to $\sigma$ in \cite{Bar10} is given by the following partition:
\begin{align}\label{partition-2}
    [(\prod_{j=1}^t n_j^2) (\prod_{(\chi,m,\alpha) \in \textbf{e}} m^2) (\prod_{i=1}^k (2n_i))(\prod_{i=1}^l (2m_i))].
\end{align}
When $\pi_v$ is of {\bf Type I} as in \eqref{typeA}, the orbit $\check{\CO}$ corresponding to $\sigma=\pi_v$ in \cite{Bar10} is given by the following partition:
\begin{align}
    [(\prod_{j=1}^t n_j^2) (\prod_{(\chi,m,\alpha) \in \textbf{e}} m^2) (\prod_{i=1}^l (2m_i))],
\end{align}
which turns out to be $\ul{p}(\psi)$ exactly.
%\end{rmk}

We claim that for the cases of $G_n=\Sp_{2n}, \SO_{2n+1}, \RO_{2n}^{\alpha}$, 
Theorem \ref{main} can be deduced from the following theorem whose proof will be given in the next three sections. 

\begin{thm}\label{main2}
Let $\sigma$ be an irreducible unramified unitary representations of $G_n(k_v)$ 
of {\bf Type (I)} as in \eqref{typeA}. For any $\ul{p} \in \frak{p}^m(\sigma)$, 
 the following bound 
\begin{align}\label{partition-3}
   \ul{p} \leq \eta_{\frak{g}_{n}^\vee, \frak{g}_{n}} [(\prod_{j=1}^t n_j^2) (\prod_{(\chi,m,\alpha) \in \textbf{e}} m^2) (\prod_{i=1}^l (2m_i+1))]
   \end{align}
   holds with the partition on the left-hand side from \eqref{partition-1} when $G_n=\Sp_{2n}, \RO_{2n}$; and 
   the following bound
   \begin{align}\label{partition-4}
      \ul{p} \leq \eta_{\frak{g}_{n}^\vee, \frak{g}_{n}} [(\prod_{j=1}^t n_j^2) (\prod_{(\chi,m,\alpha) \in \textbf{e}} m^2) (\prod_{i=1}^l (2m_i))]
\end{align}
holds with the partition on the left-hand side from \eqref{partition-2} when $G_n=\SO_{2n+1}$.
\end{thm}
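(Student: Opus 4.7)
The plan is to reduce Theorem~\ref{main2} to Okada's Theorem~\ref{keylemma0} by exhibiting $\sigma$ as a subquotient of an explicit parabolically induced representation, bounding the wave front set of that induced representation via Lusztig--Spaltenstein induction from the Levi, and then matching the resulting bound combinatorially with the Barbasch--Vogan--Spaltenstein dual on the right-hand side of \eqref{partition-3}--\eqref{partition-4}.

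First, I would write $\sigma$ as the unique unramified subquotient of
$$\Pi \;=\; \bigl(\times_{(\chi,m,\alpha) \in \mathbf{e}} \nu^{\alpha} \chi({\det}_m)\bigr) \times \chi_1({\det}_{n_1}) \times \cdots \times \chi_t({\det}_{n_t}) \rtimes \sigma_{sn},$$
which follows directly from the Type (I) recipe \eqref{typeA} combined with Theorem~\ref{thm9}. This is a standard parabolic induction from the Levi $M = \prod_{\mathbf{e}} \GL_m \times \prod_j \GL_{n_j} \times G_{n'}$. By the exactness of the twisted Jacquet functor with respect to short exact sequences of smooth representations, the non-vanishing of a twisted Jacquet module of the subquotient $\sigma$ implies the same for $\Pi$; hence $\mathfrak{n}(\sigma) \subseteq \mathfrak{n}(\Pi)$, and each $\ul{p} \in \mathfrak{p}^m(\sigma)$ is dominated by some element of $\mathfrak{p}^m(\Pi)$.

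Next, I would invoke the standard bound that the wave front set of a parabolically induced representation $\Ind_P^{G_n} \tau$ is contained in the closure of the Lusztig--Spaltenstein induction from $M$ of the wave front set of $\tau$. Each $\GL$-factor of $\tau$ is a one-dimensional character whose wave front set is the zero orbit $[0]$; on the $G_{n'}$-factor, Okada's Theorem~\ref{keylemma0} gives $\mathfrak{p}^m(\sigma_{sn}) = \{\eta_{\mathfrak{g}_{n'}^\vee,\mathfrak{g}_{n'}}([\prod_i (2m_i+1)])\}$, respectively $\{\eta_{\mathfrak{g}_{n'}^\vee,\mathfrak{g}_{n'}}([\prod_i (2m_i)])\}$ for $G_n=\SO_{2n+1}$. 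Because Lusztig--Spaltenstein induction of the zero orbit from a $\GL_a$ factor into a classical group via the corresponding Siegel-type Levi adjoins the pair $(a,a)$ (followed by the appropriate collapse to an admissible partition), every element of $\mathfrak{p}^m(\Pi)$ is bounded by the partition $\ul{q}$ obtained by adjoining the pairs $\{(n_j,n_j)\}_j$ and $\{(m,m)\}_{(\chi,m,\alpha)\in\mathbf{e}}$ to the Okada partition and then collapsing to an admissible $G_n$-partition.

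Finally, the combinatorial task is to verify that $\ul{q} \leq \eta_{\mathfrak{g}_n^\vee,\mathfrak{g}_n}(\ul{p}(\psi_v))$, where $\ul{p}(\psi_v)$ is precisely the dual-side partition on the right of \eqref{partition-3} or \eqref{partition-4}. This is a compatibility between BVS-duality and Lusztig--Spaltenstein induction: adjoining $(a,a)$-columns to a partition on the dual side corresponds, after duality, to Lusztig--Spaltenstein induction along a $\GL_a$-factor, up to a $d$-collapse. It can be checked using the explicit Spaltenstein formulas for $\eta_{\mathfrak{g}^\vee,\mathfrak{g}}$ in each classical type together with basic manipulations of collapse. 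The main obstacle is precisely this last step: the parity and multiplicity constraints on admissible partitions differ for $\Sp_{2n}$, $\SO_{2n+1}$, and $\RO_{2n}^\alpha$, and carrying out the bookkeeping so that the dominance holds (in particular when some $n_j$ or $m$ shares parity with the Okada parts $2m_i+1$ or $2m_i$) requires a careful case analysis. This is what Sections~5, 6, and 7 will carry out, one type at a time.
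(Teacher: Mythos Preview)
Your overall strategy is the same as the paper's: embed $\sigma$ as a subquotient of a parabolic induction from $\GL$-characters times $\sigma_{sn}$, bound the wave front set via Lemma~\ref{keylemma} (M{\oe}glin--Waldspurger) together with Okada's Theorem~\ref{keylemma0}, and then identify the resulting bound combinatorially with $\eta_{\frak{g}_n^\vee,\frak{g}_n}(\ul{p}(\psi))$. The paper carries out exactly this plan; the combinatorial identification is the content of Lemmas~\ref{keylemma2}, \ref{keylemma3}, and \ref{keylemma4}, and in fact the paper obtains an \emph{equality} rather than merely an inequality at that step.

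There is, however, a genuine error in your description of how Lusztig--Spaltenstein induction acts on partitions. You assert that inducing the zero orbit from a $\GL_a$-factor ``adjoins the pair $(a,a)$'' to the partition coming from $\sigma_{sn}$, i.e.\ concatenates two new parts equal to $a$. The correct formula (\cite[Theorem~7.3.3]{CM93}, which is what the paper invokes) is that one adds $2\cdot[1^a]=[2^a]$ \emph{coordinate-wise} to the $\sigma_{sn}$-partition and then takes the $G_n$-collapse; equivalently, one adds two \emph{columns} of height $a$ to the Young diagram, not two rows of length $a$. These are different operations: for example, inducing the zero orbit from $\GL_2\times\Sp_2$ to $\Sp_6$ gives $([2,2]+[1,1])_{\Sp_6}=[3,3]$, whereas your recipe yields $[2,2,1,1]$. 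Since your $\ul{q}$ is strictly smaller than the true induced orbit in such cases, the claimed bound ``every element of $\frak{p}^m(\Pi)$ is bounded by $\ul{q}$'' is false and the argument does not close. Your final paragraph actually has the right picture---adjoining parts $(a,a)$ happens on the \emph{dual} side in $\ul{p}(\psi)$, and the whole point of the key lemmas is that BVS-duality converts this into the correct coordinate-wise addition of $[2^a]$ on the $G_n$-side---but the middle paragraph conflates the two sides.
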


For the case of $G_n=\RU_n$, $E/k$ a quadratic extension, by similar arguments, for any $\pi \in \wt{\Pi}_{\psi}(\epsilon_{\psi})$, there is a finite local place $v$ such that $G_n(E_v)=\GL_n(k_v) \times \GL_n(k_v)$, split, and $\pi_v$ is unramified. Then, Theorem \ref{main} is simply implied by the classfication of the unramified unitary dual of $\GL_n$ (\cite{Tad86}) and the result of Moeglin and Waldspurger on the wave front set of representations of $\GL_n$ (\cite[Section II.2]{MW87}). Note that for $G_n=\RU_n$, the Barbasch-Vogan-Spaltenstein duality is just the transpose of partitions. We omit the details here. 

This completes the proof of Theorem \ref{main}. 

\begin{rmk}
We expect that the method of proving Theorem \ref{main} in this paper also applies to the inner forms of even orthogonal groups, once the full Arthur classification of the discrete spectrum being carried out (see \cite{CZ21a, CZ21b} for recent progress in this direction). The same method can also be applied to the metaplectic double cover of symplectic groups, whose proof will appear elsewhere. 
Note that for the metaplectic double cover of symplectic groups, the notion of Barbasch-Vogan-Spaltenstein duality has been defined in \cite{BMSZ20}.
\end{rmk}

%%%%%%%%%%%%%%%%%%%%%%%%%%%%%%%%%%%%%
%%%%%%%%%%%%%%%%%%%%%%%%%%%%%%%%%%%%%

\section{Proof of Theorem \ref{main2}, $G_n=\Sp_{2n}$}

%In this section, we prove Theorem \ref{main2} for the case of$G_n=\Sp_{2n}$. 
First, we recall the following general lemma which can be deduced from the argument in
\cite[Section II.1.3]{MW87}. 

\begin{lem}[Section II.1.3, \cite{MW87}]\label{keylemma}
Let $G$ be a reductive group defined over a non-Archimedean local field $F$, and $Q=MN$ be a parabolic subgroup of $G$.
Let $\delta$ be an irreducible admissible representation of $M$. Then 
$$\frak{n}^m(\Ind_Q^G \delta)=\{\Ind_{\frak{q}}^{\frak{g}} \mathcal{O} : \mathcal{O} \in \frak{n}^m(\delta)\},$$
where $\frak{q}$ and $\frak{g}$ are the Lie algebras of $Q$ and $G$, respectively. For induced nilpotent orbits, see \cite[Chapter 7]{CM93}. 
\end{lem}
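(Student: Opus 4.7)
The plan is to follow the strategy of \cite{MW87}, Section~II.1.3, which combines the geometric lemma for parabolically induced representations with the Lusztig--Spaltenstein induction of nilpotent orbits. Recall that $\frak{n}^m(\sigma)$ is characterized by non-vanishing of twisted Jacquet modules $\sigma_{N_{s,u},\psi_u}$ for neutral pairs $(s,u)$, and that $\Ind_{\frak{q}}^{\frak{g}}\mathcal{O}_M$ is the unique open $G$-orbit meeting $\mathcal{O}_M+\frak{n}$. A crucial input from \cite{GGS17} is that non-vanishing of $\sigma_{N_{s,u},\psi_u}$ depends only on the $G$-orbit of $u$, not on the particular neutral pair chosen; this gives the flexibility to use Whittaker pairs that are compatible with the parabolic $Q=MN$.

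For the inclusion $\supseteq$, given $\mathcal{O}_M\in\frak{n}^m(\delta)$, I would pick a neutral pair $(s_M,u)$ inside $\frak{m}$ with $u\in\mathcal{O}_M$ and $\delta_{N^M_{s_M,u},\psi_u}\neq 0$. By Lusztig--Spaltenstein theory, there is $v\in\frak{n}$ such that $u':=u+v\in\Ind_{\frak{q}}^{\frak{g}}\mathcal{O}_M$ and such that $u'$ admits a neutral element $s'$ compatible with $Q$ (one can take $s'=s_M+s_N$ where $s_N$ acts by non-negative weights on $\frak{n}$). A direct computation of the Jacquet filtration, together with Frobenius reciprocity for twisted Jacquet modules, produces a surjection (or at least a non-zero map) from $(\Ind_Q^G\delta)_{N_{s',u'},\psi_{u'}}$ onto $\delta_{N^M_{s_M,u},\psi_u}$, which shows $\Ind_{\frak{q}}^{\frak{g}}\mathcal{O}_M\in\frak{n}(\Ind_Q^G\delta)$. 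Maximality then follows because any orbit strictly larger than $\Ind_{\frak{q}}^{\frak{g}}\mathcal{O}_M$ in $\frak{n}(\Ind_Q^G\delta)$ would, by the converse argument below, descend to an orbit of $M$ strictly larger than $\mathcal{O}_M$ in $\frak{n}(\delta)$, contradicting $\mathcal{O}_M\in\frak{n}^m(\delta)$.

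For the inclusion $\subseteq$, let $\mathcal{O}'\in\frak{n}^m(\Ind_Q^G\delta)$ with neutral pair $(s',u')$ giving non-vanishing twisted Jacquet module. After a $G$-conjugation I would arrange $u'\in\frak{q}$ and write $u'=u+v$ with $u\in\frak{m}$, $v\in\frak{n}$; let $\mathcal{O}_M$ be the $M$-orbit of $u$. By the geometry of induction, $\mathcal{O}'\subseteq\overline{\Ind_{\frak{q}}^{\frak{g}}\mathcal{O}_M}$, so $\mathcal{O}'\leq\Ind_{\frak{q}}^{\frak{g}}\mathcal{O}_M$. The Bernstein--Zelevinsky/Casselman geometric lemma, suitably applied to the twisted Jacquet functor $(-)_{N_{s',u'},\psi_{u'}}$, gives a finite filtration of $(\Ind_Q^G\delta)_{N_{s',u'},\psi_{u'}}$ whose successive quotients are built from twisted Jacquet modules of $\delta$ along Whittaker pairs on $M$. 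Non-vanishing forces some $\delta_{N^M_{s_M,u},\psi_u}\neq 0$, whence $\mathcal{O}_M\in\frak{n}(\delta)$. Combined with $\mathcal{O}'\leq\Ind_{\frak{q}}^{\frak{g}}\mathcal{O}_M$, maximality of $\mathcal{O}'$ forces $\mathcal{O}'=\Ind_{\frak{q}}^{\frak{g}}\mathcal{O}_M$, and since $\Ind_{\frak{q}}^{\frak{g}}$ is monotone for the closure ordering, $\mathcal{O}_M\in\frak{n}^m(\delta)$.

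The main obstacle will be the adaptation of the geometric lemma from ordinary Jacquet modules to the twisted Jacquet modules attached to arbitrary Whittaker pairs $(s,u)$ rather than only those built from standard Levi decompositions. The bridge is precisely the content of \cite[\S II.1.3]{MW87}: one reduces to the case where the semisimple element $s'$ commutes with a Levi of $Q$ (using conjugation invariance of twisted Jacquet modules up to isomorphism), and then identifies the resulting twisted Jacquet functor on $\Ind_Q^G\delta$ with the composition of the ordinary Jacquet functor along $N$ and a twisted Jacquet functor on $M$. Once this compatibility is established, both inclusions reduce to the combinatorics of induction of nilpotent orbits.
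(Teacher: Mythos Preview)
The paper does not give its own proof of this lemma; it is stated with the attribution ``Section II.1.3, \cite{MW87}'' and the surrounding text says only that it ``can be deduced from the argument in \cite[Section II.1.3]{MW87}.'' There is therefore nothing in the paper to compare your proposal against.

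Your outline is a reasonable sketch in the spirit of \cite{MW87}, filtered through the \cite{GGS17} formalism that the paper uses to define $\frak{n}(\sigma)$. One small repair is needed in the $\subseteq$ direction: from $\mathcal{O}'=\Ind_{\frak{q}}^{\frak{g}}\mathcal{O}_M$ and monotonicity of orbit induction you conclude $\mathcal{O}_M\in\frak{n}^m(\delta)$, but induction of nilpotent orbits is monotone without being strictly monotone, so this does not follow. The fix is immediate: choose any $\mathcal{O}_M'\in\frak{n}^m(\delta)$ with $\mathcal{O}_M'\geq\mathcal{O}_M$; then $\Ind_{\frak{q}}^{\frak{g}}\mathcal{O}_M'\geq\mathcal{O}'$ lies in $\frak{n}(\Ind_Q^G\delta)$ by the first half of your argument, and maximality of $\mathcal{O}'$ forces equality, giving $\mathcal{O}'=\Ind_{\frak{q}}^{\frak{g}}\mathcal{O}_M'$ with $\mathcal{O}_M'\in\frak{n}^m(\delta)$ as the lemma requires. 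Beyond this, the genuine work is, as you say, the twisted-Jacquet analogue of the geometric lemma; note that the original \cite{MW87} argument largely sidesteps a direct Mackey-type filtration by passing through the Harish-Chandra--Howe local character expansion, where the behaviour of the leading coefficients $c_{\mathcal{O}}$ under parabolic induction is computed from the induced-character formula and matches Lusztig--Spaltenstein induction on the nose.
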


Now we prove Theorem \ref{main2} for the case that $G_n=\Sp_{2n}$.
By the assumption of Theorem \ref{main2}, $\sigma$ is of {\bf Type (I)} and is of the following form:
\begin{equation*}
\sigma = \times_{(\chi, m, \alpha) \in \textbf{e}}
v^{\alpha} \chi({\det}_m) \rtimes \sigma_{neg},
\end{equation*}
where $\sigma_{neg}$ is the unique irreducible negative unramified  subrepresentation
of the following induced representation
\begin{equation*}
\chi_1 ({\det}_{{n_1}}) \times \cdots \times \chi_t ({\det}_{{n_t}}) \rtimes \sigma_{sn},
\end{equation*}
with $\sigma_{sn}$ being the unique strongly negative unramified constituent of the following induced representation:
\begin{align*}
\begin{split}
& \nu^{\frac{m_{l-1}-m_l}{2}} 1_{{\det}_{m_{l-1}+m_l+1}}
\times
\nu^{\frac{m_{l-3}-m_{l-2}}{2}} 1_{{\det}_{m_{l-3}+m_{l-2}+1}}\\
& \qquad\times \cdots \times
\nu^{\frac{m_{2}-m_3}{2}} 1_{{\det}_{m_{2}+m_3+1}}
\rtimes 1_{\Sp_{2m_1}}.
\end{split}
\end{align*}
As representations of general linear groups, it is known that 
$$\frak{p}^m(\chi({\det}_k))=\{[1^k]\},$$
for any given character $\chi$ and any integer $k$. By Lemma \ref{keylemma}, we have 
\begin{align*}
    &\frak{p}^m(\times_{(\chi, m, \alpha) \in \textbf{e}}
v^{\alpha} \chi({\det}_m) \times \chi_1 ({\det}_{{n_1}}) \times \cdots \times \chi_t ({\det}_{{n_t}}))\\
&\quad=\{+_{(\chi, m, \alpha) \in \textbf{e}} [1^m] + [1^{n_1}]+ \cdots [1^{n_t}]\}=\{[(\prod_{(\chi, m, \alpha) \in \textbf{e}} m) (\prod_{i=1}^t n_i)]^t\}.
\end{align*}
By Theorem \ref{keylemma0}, Lemma \ref{keylemma}, and by \cite[Theorem 7.3.3]{CM93} on formula for induced nilpotent orbits, for any $\ul{p} \in \frak{p}^m(\sigma)$, we have 
$$\ul{p} \leq (2[(\prod_{(\chi, m, \alpha) \in \textbf{e}} m) (\prod_{i=1}^t n_i)]^t + \eta_{\frak{so}_{2k+1}, \frak{sp}_{2k}}([\prod_{j=1}^l(2m_j+1)]))_{\Sp_{2n}},$$
where $2k=(\sum_{i=1}^l (2m_i+1)) -1$. 

To prove Theorem \ref{main2} in the case, it suffices to show the following lemma.

\begin{lem}\label{keylemma2}
The following identity
\[
\eta_{\frak{so}_{2n+1}, \frak{sp}_{2n}} (\ul{p}(\psi)) = \left(2[(\prod_{(\chi, m, \alpha) \in \textbf{e}} m) (\prod_{i=1}^t n_i)]^t + \eta_{\frak{so}_{2k+1}, \frak{sp}_{2k}}([\prod_{j=1}^l(2m_j+1)])\right)_{\Sp_{2n}}
\]
holds with $2k=(\sum_{i=1}^l (2m_i+1)) -1$.
\end{lem}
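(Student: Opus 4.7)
The plan is to reduce the identity to a purely combinatorial commutation between the symplectic collapse and addition of a partition all of whose entries are even.

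First I would encode the data combinatorially. Let $\mu$ be the partition whose parts are the concatenation of $\{n_1,\dots,n_t\}$ and $\{m : (\chi, m, \alpha) \in \mathbf{e}\}$ (with multiplicity), and set $\nu = [(2m_1+1)\cdots(2m_l+1)]$. Then the $B$-partition $\ul p(\psi)$ is the multiset union $\mu^2 \cup \nu$ (each part of $\mu$ taken with multiplicity two, union with $\nu$), and counting boxes column by column in the Young diagram gives the transpose identity
\[
(\mu^2 \cup \nu)^t \;=\; 2\mu^t + \nu^t \qquad (\text{termwise sum, zero-padded}),
\]
because each part of $\mu$ contributes $2$ to every column it reaches. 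In the notation of the lemma one has $[(\prod_{\mathbf{e}} m)(\prod_i n_i)]^t = \mu^t$ and $|\nu| = 2k+1$, while $|\ul p(\psi)| = 2|\mu| + 2k + 1 = 2n+1$.

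Next, I would apply the combinatorial description of the Barbasch-Vogan-Spaltenstein duality $\eta_{\frak{so}_{2N+1}, \frak{sp}_{2N}}$: transpose the $B$-partition and then apply the symplectic collapse $(\cdot)_{\Sp_{2N}}$ (which produces the largest $C$-partition of $2N$ dominated, in the dominance order, by the input sequence). Combined with the transpose identity above, this gives
\[
\eta_{\frak{so}_{2n+1}, \frak{sp}_{2n}}(\ul p(\psi)) = (2\mu^t + \nu^t)_{\Sp_{2n}}, \quad \eta_{\frak{so}_{2k+1}, \frak{sp}_{2k}}(\nu) = (\nu^t)_{\Sp_{2k}}.
\]
The lemma therefore reduces to the commutation identity
\[
(2\mu^t + \nu^t)_{\Sp_{2n}} \;=\; \bigl(2\mu^t + (\nu^t)_{\Sp_{2k}}\bigr)_{\Sp_{2n}}.
\]

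The heart of the proof is establishing this commutation. Since $\mu^t$ is a partition, $2\mu^t + \beta$ is a partition whenever $\beta$ is. More crucially, $2\mu^t$ has all even entries, so adding it preserves the parity of each row of any partition; it cannot create or destroy rows of odd value, it only rescales them. I would prove the commutation by induction on the symplectic defect of $\nu^t$ (the number of elementary collapse moves needed to reach $(\nu^t)_{\Sp_{2k}}$), showing that each local collapse move applied to $\nu^t$ lifts to the same move applied to $2\mu^t + \nu^t$, and that the two sides of the identity dominate exactly the same set of $C$-partitions of $2n$. The size check $2|\mu| + 2k = 2n$ is automatic.

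The main obstacle is the subtlety that adding $2\mu^t$ can split previously equal odd parts of $\nu^t$ into distinct odd parts, whenever the relevant consecutive entries of $\mu^t$ differ; this changes the multiplicity structure of odd rows. The resolution is that every such group of $r$ equal odd rows splits into sub-groups of sizes summing to $r$, with the total parity preserved, so each elementary collapse move of the symplectic collapse algorithm (which acts on the last position realizing a bad odd value and the following row) remains valid after addition of $2\mu^t$ and produces the same partition. The bookkeeping is somewhat intricate but entirely elementary, and the remaining two sections for $\SO_{2n+1}$ and $\RO_{2n}^{\alpha}$ should run in complete parallel after adjusting the formula for $\eta$ and the corresponding $B$-, $D$-collapses.
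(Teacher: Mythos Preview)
Your reduction is close in spirit to the paper's, but there are two genuine gaps.

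First, your formula for $\eta_{\frak{so}_{2N+1},\frak{sp}_{2N}}$ is not the one used (or justified) in the paper. The paper uses the standard description
\[
\eta_{\frak{so}_{2N+1},\frak{sp}_{2N}}(\lambda)=\bigl((\lambda^{-})_{\Sp_{2N}}\bigr)^{t},
\]
where $\lambda^{-}$ removes one box from the smallest part so that the input to the $\Sp_{2N}$-collapse already has size $2N$. Your version ``transpose then take the largest $C$-partition of $2N$ dominated by the result'' involves a size-reducing collapse, which is nonstandard and which you do not derive from the standard formula. Once you use the paper's formula and the identity $(\ul p^{\Sp_{2n}})^t=(\ul p^{t})_{\Sp_{2n}}$ (this is what lets you move between the two sides), the lemma reduces not to your commutation identity but to
\[
(\ul p(\psi)^{-})_{\Sp_{2n}}=\bigl[\mu^{2}\cup(\nu^{-})_{\Sp_{2k}}\bigr]^{\Sp_{2n}},
\]
an equality between a symplectic \emph{collapse} on the left and a symplectic \emph{expansion} on the right.

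Second, and more importantly, your proposed proof of the commutation is not a proof. The obstacle you identify is exactly the difficulty: adding $2\mu^{t}$ can split a block of equal odd rows in $\nu^{t}$ into several blocks of distinct odd values, so the elementary collapse moves on $\nu^{t}$ do \emph{not} lift to ``the same move'' on $2\mu^{t}+\nu^{t}$; the positions at which the algorithm acts, and the values it acts on, genuinely change. Your resolution (``total parity preserved, so each move remains valid and produces the same partition'') is a non-sequitur: parity preservation does not imply that two different sequences of collapse moves terminate at the same partition. The paper does not attempt an inductive lifting argument at all. Instead it writes $\mu^{2}=[k_{s}^{2}\cdots k_{1}^{2}]$, separates into the cases $k_{1}\geq 2m_{1}+1$ and $k_{1}<2m_{1}+1$, and in each case explicitly lists which odd $k_{i}$'s fall between consecutive pairs $(2m_{2j+1}+1,2m_{2j}+1)$, then carries out both the collapse on the left-hand side and the expansion on the right-hand side move by move, checking they agree. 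This explicit bookkeeping is precisely what your sketch hides, and it is where all the content lies.
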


\begin{proof}
Recall that
$$\ul{p}(\psi)=[(\prod_{(\chi, m, \alpha) \in \textbf{e}} m^2) (\prod_{i=1}^t n_i^2)(\prod_{j=1}^l(2m_j+1))]$$
and 
$$\eta_{\frak{so}_{2n+1}, \frak{sp}_{2n}} (\ul{p}(\psi)) = ((\ul{p}(\psi)^-)_{\Sp_{2n}})^t,$$
where given any partition $\ul{p}=[p_r \cdots p_1]$ with $p_r \geq \cdots \geq p_1$,  we have that $\ul{p}^-=[p_r \cdots (p_1-1)]$.
On the other hand, we have 
\begin{align*}
    &\left(2[(\prod_{(\chi, m, \alpha) \in \textbf{e}} m) (\prod_{i=1}^t n_i)]^t + \eta_{\frak{so}_{2k+1}, \frak{sp}_{2k}}([\prod_{j=1}^l(2m_j+1)])\right)_{\Sp_{2n}}\\
    &\quad=\left(2[(\prod_{(\chi, m, \alpha) \in \textbf{e}} m) (\prod_{i=1}^t n_i)]^t + (([\prod_{j=1}^l(2m_j+1)]^-)_{Sp_{2k}})^t\right)_{\Sp_{2n}}\\
    &\quad\quad=\left([(\prod_{(\chi, m, \alpha) \in \textbf{e}} m^2) (\prod_{i=1}^t n_i^2)(([\prod_{j=1}^l(2m_j+1)]^-)_{Sp_{2k}})]^t\right)_{\Sp_{2n}}.
\end{align*}
Given any partition $\ul{p}$ of $\Sp_{2n}$, it is known that $(\ul{p}^{\Sp_{2n}})^t=(\ul{p}^t)_{\Sp_{2n}}$ (see \cite[Proof of Theorem 6.3.11]{CM93}). Note that 
$$[(\prod_{(\chi, m, \alpha) \in \textbf{e}} m^2) (\prod_{i=1}^t n_i^2)(([\prod_{j=1}^l(2m_j+1)]^-)_{\Sp_{2k}})]$$
is indeed a symplectic partition. 
Hence we have 
\begin{align*}
&\left([(\prod_{(\chi, m, \alpha) \in \textbf{e}} m^2) (\prod_{i=1}^t n_i^2)(([\prod_{j=1}^l(2m_j+1)]^-)_{\Sp_{2k}})]^t\right)_{\Sp_{2n}}\\
&\qquad=\left([(\prod_{(\chi, m, \alpha) \in \textbf{e}} m^2) (\prod_{i=1}^t n_i^2)(([\prod_{j=1}^l(2m_j+1)]^-)_{\Sp_{2k}})]^{\Sp_{2n}}\right)^t.
\end{align*}
Therefore, we only need to show that
\begin{equation}\label{keylemma2equ1}
   (\ul{p}(\psi)^-)_{\Sp_{2n}} = [(\prod_{(\chi, m, \alpha) \in \textbf{e}} m^2) (\prod_{i=1}^t n_i^2)(([\prod_{j=1}^l(2m_j+1)]^-)_{\Sp_{2k}})]^{\Sp_{2n}}. 
\end{equation}
Note that 
$$
\left([\prod_{j=1}^l(2m_j+1)]^-\right)_{\Sp_{2k}} 
= [(2m_l)(2m_{l-1}+2) \cdots (2m_3)(2m_2+2)(2m_1)].$$

% By the definition of $Sp_{2n}$-collapse, one observation is that if we have $k_{j+1} \geq 2m_l+1 > k_{j}$ for some $s \geq j \geq 1$, then
% \begin{align*}
%     (\ul{p}(\psi)^-)_{Sp_{2n}}
%     =&\,[(k_s^2\cdots k_{j+1}^2)((k_j^2\cdots k_1^2\prod_{j=1}^l(2m_j+1))^-)_{Sp}],
% \end{align*}
% and 
% \begin{align*}
%     &\,[(\prod_{(\chi, m, \alpha) \in \textbf{e}} m^2) (\prod_{i=1}^t n_i^2)(([\prod_{j=1}^l(2m_j+1)]^-)_{Sp_{2k}})]^{Sp_{2n}}\\
%     =&\,[(k_s^2\cdots k_{j+1}^2)((k_j^2\cdots k_1^2(([\prod_{j=1}^l(2m_j+1)]^-)_{Sp_{2k}}))^{Sp}].
% \end{align*}
% We only need to show that 
% \begin{align*}
%     [(k_j^2\cdots k_1^2\prod_{j=1}^l(2m_j+1))^-]_{Sp}
%     =&\,[k_j^2\cdots k_1^2(([\prod_{j=1}^l(2m_j+1)]^-)_{Sp_{2k}})]^{Sp}.
% \end{align*}
% Hence, without loss of generality, we may assume that $2m_l+1> k_s$. 
We have to rewrite the partition 
\[
[(\prod_{(\chi, m, \alpha) \in \textbf{e}} m^2) (\prod_{i=1}^t n_i^2)]
\]
as
$[k_s^2k_{s-1}^2\cdots k_1^2]$ with $k_s \geq k_{s-1} \geq \cdots \geq k_1$. 
To proceed, we separate into the following cases: 
\begin{enumerate}
    \item $k_1 \geq 2m_1+1$;
    \item $k_1 < 2m_1+1$.
\end{enumerate}
In each case, for $1 \leq j \leq \frac{l-1}{2}$, we list all the different odd $k_i$'s between $2m_{2j+1}+1$ and $2m_{2j}+1$ as 
$$2m_{2j+1}+1 > k_j^1 > k_j^2 > \cdots > k_j^{s_j} > 2m_{2j}+1.$$

\quad

{\bf Case (1):} $k_1 \geq 2m_1+1$. We have
$$\ul{p}(\psi)^-=[(k_s^2k_{s-1}^2\cdots k_1^2)(\prod_{j=2}^l(2m_j+1))(2m_1)].$$
Then $(\ul{p}(\psi)^-)_{Sp_{2n}}$ is obtained from $\ul{p}(\psi)^-$ via replacing 
\[
(2m_{2j+1}+1, 2m_{2j}+1)
\]
by $(2m_{2j+1}, 2m_{2j}+2)$, and $k_j^{i,2}$ by $(k_j^i+1,k_j^i-1)$, for $1 \leq j \leq \frac{l-1}{2}$, $1 \leq i \leq s_j$. On the other hand, we have 
\begin{align*}
    &[k_s^2\cdots k_1^2(([\prod_{j=1}^l(2m_j+1)]^-)_{\Sp_{2k}})]^{\Sp_{2n}}\\
    &\qquad=[k_s^2\cdots k_1^2(2m_l)(2m_{l-1}+2) \cdots (2m_3)(2m_2+2)(2m_1)]^{\Sp_{2n}},
\end{align*}
which is obtained from 
\[
[k_s^2\cdots k_1^2(2m_l)(2m_{l-1}+2) \cdots (2m_3)(2m_2+2)(2m_1)]
\]
via 
replacing $k_j^{i,2}$ by $(k_j^i+1,k_j^i-1)$, for $1 \leq j \leq \frac{l-1}{2}$, $1 \leq i \leq s_j$.
Hence, we deduce that \eqref{keylemma2equ1} holds. 

\quad

{\bf Case (2):} $k_1 < 2m_1+1$. We have
$$\ul{p}(\psi)^-=[(k_s^2k_{s-1}^2\cdots k_2^2)(\prod_{j=1}^l(2m_j+1))(k_1)(k_1-1)].$$
To carry out the $\Sp_{2n}$-collapse of
$\ul{p}(\psi)^-$, we also need to list all the different odd $k_i$'s between $2m_1+1$ and $k_1$ as
$$
2m_{1}+1 > k_0^1 > k_0^2 > \cdots > k_0^{s_0} > k_1.
$$
Then $(\ul{p}(\psi)^-)_{\Sp_{2n}}$ is obtained from $\ul{p}(\psi)^-$ via  replacing 
\[
(2m_{2j+1}+1, 2m_{2j}+1)
\]
by $(2m_{2j+1}, 2m_{2j}+2)$ and $k_j^{i,2}$ by $(k_j^i+1,k_j^i-1)$, for $1 \leq j \leq \frac{l-1}{2}$, $1 \leq i \leq s_j$; and then replacing $(2m_{1}+1, k_1-1)$ by $(2m_{1}, k_1)$ if $k_1$ is even, $(2m_{1}+1, k_1)$ by $(2m_{1}, k_1+1)$ if $k_1$ is odd, and $k_0^{i,2}$ by $(k_0^i+1,k_0^i-1)$, for $1 \leq i \leq s_0$. 
On the other hand, we obtain 
\begin{align*}
    &[k_s^2\cdots k_1^2(([\prod_{j=1}^l(2m_j+1)]^-)_{\Sp_{2k}})]^{\Sp_{2n}}\\
    &\qquad=[k_s^2\cdots k_1^2(2m_l)(2m_{l-1}+2) \cdots (2m_3)(2m_2+2)(2m_1)]^{\Sp_{2n}},
\end{align*}
which is obtained from 
\[
[k_s^2\cdots k_1^2(2m_l)(2m_{l-1}+2) \cdots (2m_3)(2m_2+2)(2m_1)]
\]
via 
replacing $k_j^{i,2}$ by $(k_j^i+1,k_j^i-1)$, for $1 \leq j \leq \frac{l-1}{2}$, $1 \leq i \leq s_j$;
and then replacing $k_1^2$ by $(k_1+1,k_1-1)$ if $k_1$ is odd, 
$k_0^{i,2}$ by $(k_0^i+1,k_0^i-1)$, for $1 \leq i \leq s_0$. 
Hence, we deduce that \eqref{keylemma2equ1} still holds.

This completes the proof of the lemma. 
\end{proof}

The proof of Theorem \ref{main2} has been completed for $G_n=\Sp_{2n}$.

\section{Proof of Theorem \ref{main2}, $G_n=\SO_{2n+1}$}

%In this section, we prove Theorem \ref{main2} for the case of$G_n=\SO_{2n+1}$. 
By the assumption of Theorem \ref{main2}, $\sigma$ is of 
{\bf Type (I)} and is of the following form:
\begin{equation*}
\sigma = \times_{(\chi, m, \alpha) \in \textbf{e}}
v^{\alpha} \chi({\det}_m) \rtimes \sigma_{neg},
\end{equation*}
where $\sigma_{neg}$ is the unique irreducible negative unramified  subrepresentation
of the following induced representation
\begin{equation*}
\chi_1 ({\det}_{{n_1}}) \times \cdots \times \chi_t ({\det}_{{n_t}}) \rtimes \sigma_{sn},
\end{equation*}
with $\sigma_{sn}$ being the unique strongly negative unramified constituent of the following induced representation:
\begin{align*}
\begin{split}
&\nu^{\frac{m_{l-1}-m_l}{2}} 1_{{\det}_{m_{l-1}+m_l}}
\times
\nu^{\frac{m_{l-3}-m_{l-2}}{2}} 1_{{\det}_{m_{l-3}+m_{l-2}}}\\
& \qquad\times \cdots \times
\nu^{\frac{m_{1}-m_2}{2}} 1_{{\det}_{m_{1}+m_2}}
\rtimes 1_{\SO_1}.
\end{split}
\end{align*}
Also recall that $l$ is even, and $0 \leq m_1 < m_2 < \cdots < m_l$. 

As in Section 5, by Lemma \ref{keylemma}, we have 
\begin{align*}
    &\frak{p}^m(\times_{(\chi, m, \alpha) \in \textbf{e}}
v^{\alpha} \chi({\det}_m) \times \chi_1 ({\det}_{{n_1}}) \times \cdots \times \chi_t ({\det}_{{n_t}}))\\
&\qquad=\{[(\prod_{(\chi, m, \alpha) \in \textbf{e}} m) (\prod_{i=1}^t n_i)]^t\}.
\end{align*}
By Theorem \ref{keylemma0}, Lemma \ref{keylemma}, and by \cite[Theorem 7.3.3]{CM93} on formula for induced nilpotent orbits, any $\ul{p} \in \frak{p}^m(\sigma)$ has the following upper bound
$$\ul{p} \leq \left(2[(\prod_{(\chi, m, \alpha) \in \textbf{e}} m) (\prod_{i=1}^t n_i)]^t + \eta_{ \frak{sp}_{2k},\frak{so}_{2k+1}}([\prod_{j=1}^l(2m_j)])\right)_{\SO_{2n+1}},$$
where $2k=\sum_{i=1}^l (2m_i)$. 

To prove Theorem \ref{main2} in this case, it suffices to show the following lemma.

\begin{lem}\label{keylemma3}
The following identity 
$$\eta_{\frak{sp}_{2n},\frak{so}_{2n+1}} (\ul{p}(\psi)) = \left(2[(\prod_{(\chi, m, \alpha) \in \textbf{e}} m) (\prod_{i=1}^t n_i)]^t + \eta_{ \frak{sp}_{2k},\frak{so}_{2k+1}}([\prod_{j=1}^l(2m_j)])\right)_{\SO_{2n+1}}$$
holds with $2k=\sum_{i=1}^l (2m_i)$.
\end{lem}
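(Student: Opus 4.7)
The plan is to mirror the argument of Lemma \ref{keylemma2}, interchanging the roles of $\Sp_{2n}$ and $\SO_{2n+1}$ throughout (together with $\Sp_{2k}$ and $\SO_{2k+1}$). The key input is an explicit combinatorial description of the Barbasch--Vogan--Spaltenstein duality $\eta_{\frak{sp}_{2n},\frak{so}_{2n+1}}$ from symplectic partitions of $2n$ to orthogonal partitions of $2n+1$, analogous to the formula $\eta_{\frak{so}_{2n+1},\frak{sp}_{2n}}(\ul{p}) = ((\ul{p}^-)_{\Sp_{2n}})^t$ used in Lemma \ref{keylemma2}. Applying this description to both $\ul{p}(\psi)$ on the LHS and to $[\prod_{j=1}^l (2m_j)]$ inside the RHS converts the claimed identity into a purely combinatorial statement about partitions.

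The next step uses the identity $(\lambda \cup \mu)^t = \lambda^t + \mu^t$ together with $[\prod m^2]^t = 2[\prod m]^t$ to combine the ``doubled transpose'' $2[(\prod m)(\prod n_i)]^t$ with the BV-dual piece into a single transpose of a union of partitions. One then invokes the analog of the transpose/collapse commutation $(\ul{p}^{\Sp_{2n}})^t = (\ul{p}^t)_{\Sp_{2n}}$ used in the proof of Lemma \ref{keylemma2}, now adapted for partitions of size $2n+1$. The crucial observation (parallel to the Sp-version) is that the relevant inner partition, built from $(\prod m^2)(\prod n_i^2)$ together with the $\SO_{2k+1}$-collapsed piece, is itself an orthogonal partition --- each $m^2$ and $n_i^2$ contributes even multiplicity to each value, and the inner collapse produces an orthogonal partition by construction --- so its SO-expansion equals itself. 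This reduces the lemma to a combinatorial identity between two explicit partitions involving an $\SO_{2n+1}$-collapse.

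The final step is a case analysis mirroring the two cases in the proof of Lemma \ref{keylemma2}. One writes $[(\prod m^2)(\prod n_i^2)]$ in decreasing order as $[k_s^2 k_{s-1}^2 \cdots k_1^2]$ and splits into cases based on how the smallest product part $k_1$ compares with $2m_1$. In each case, one describes the $\SO_{2n+1}$-collapse explicitly: pairs of consecutive even parts of odd multiplicity get adjusted by shifting a unit from one part to the next, mirroring the pairwise $\Sp$-collapse adjustments made on odd parts in Lemma \ref{keylemma2}. The main obstacle is the careful bookkeeping of parity: since $\SO$-collapse acts on even parts of odd multiplicity (whereas $\Sp$-collapse acts on odd parts of odd multiplicity), the pattern of adjustments is the mirror image of that in Lemma \ref{keylemma2}. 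Once the case split is set up correctly, the verification is essentially mechanical and parallel to the $\Sp_{2n}$ case with parity roles exchanged.
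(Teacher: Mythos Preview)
Your outline matches the paper's approach: reduce via the explicit formula for the Barbasch--Vogan duality, use $(\ul{p}^{\SO_{2n+1}})^t=(\ul{p}^t)_{\SO_{2n+1}}$ and the fact that the inner partition is already orthogonal, and finish with a two-case collapse computation. However, one concrete detail is set up incorrectly and would derail the final step.

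The duality in this direction is $\eta_{\frak{sp}_{2n},\frak{so}_{2n+1}}(\ul{p})=((\ul{p}^{+})_{\SO_{2n+1}})^t$, where $\ul{p}^{+}$ adds $1$ to the \emph{largest} part, not the smallest. Consequently the case split cannot be on $k_1$ versus $2m_1$; it must be on the \emph{top} end, namely $k_s$ versus $2m_l$. In Case~(1), $k_s\le 2m_l$, one has $\ul{p}(\psi)^+=[(k_s^2\cdots k_1^2)(2m_l+1)\prod_{j=1}^{l-1}(2m_j)]$; in Case~(2), $k_s>2m_l$, one has $\ul{p}(\psi)^+=[(k_s+1)k_s(k_{s-1}^2\cdots k_1^2)\prod_{j=1}^{l}(2m_j)]$. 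The paper's proof proceeds exactly this way. If you split on $k_1$ versus $2m_1$ you will not be able to identify where the extra $+1$ lands, and the comparison with the right-hand side breaks down.

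There is also a feature with no direct analogue in Lemma~\ref{keylemma2} that your sketch does not anticipate: since $l$ is even here, after pairing $(2m_{2j+1},2m_{2j})$ for $1\le j\le (l-2)/2$ one is left with the singleton $2m_1$ at the bottom, and when $2m_1\neq 0$ the $\SO$-collapse must also adjust the pair $(2m_1,0)\mapsto(2m_1-1,1)$ together with the even $k_i$'s lying between $2m_1$ and $0$. This bottom adjustment occurs in \emph{both} cases of the correct split and is what produces the trailing ``$(2m_1-1)\,1$'' in $([\prod_{j=1}^l(2m_j)]^+)_{\SO_{2k+1}}$. With these two corrections your plan goes through and coincides with the paper's proof.
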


\begin{proof}
Recall that
$$\ul{p}(\psi)=[(\prod_{(\chi, m, \alpha) \in \textbf{e}} m^2) (\prod_{i=1}^t n_i^2)(\prod_{j=1}^l(2m_j))]$$
and 
$$\eta_{ \frak{sp}_{2k},\frak{so}_{2k+1}} (\ul{p}(\psi)) = ((\ul{p}(\psi)^+)_{SO_{2n+1}})^t,$$
where for any given partition $\ul{p}=[p_r \cdots p_1]$ with $p_r \geq \cdots \geq p_1$, we have $\ul{p}^+=[(p_r+1) \cdots p_1]$.
On the other hand, we have 
\begin{align*}
    &\left(2[(\prod_{(\chi, m, \alpha) \in \textbf{e}} m) (\prod_{i=1}^t n_i)]^t + \eta_{ \frak{sp}_{2k},\frak{so}_{2k+1}}([\prod_{j=1}^l(2m_j)])\right)_{\SO_{2n+1}}\\
    &\quad=\left(2[(\prod_{(\chi, m, \alpha) \in \textbf{e}} m) (\prod_{i=1}^t n_i)]^t + (([\prod_{j=1}^l(2m_j)]^+)_{\SO_{2k+1}})^t\right)_{\SO_{2n+1}}\\
    &\quad\quad=\left([(\prod_{(\chi, m, \alpha) \in \textbf{e}} m^2) (\prod_{i=1}^t n_i^2)(([\prod_{j=1}^l(2m_j)]^+)_{\SO_{2k+1}})]^t\right)_{\SO_{2n+1}}.
\end{align*}
Given any partition $\ul{p}$ of $\SO_{2n+1}$, it is known that $(\ul{p}^{\SO_{2n+1}})^t=(\ul{p}^t)_{\SO_{2n+1}}$ (see \cite[Proof of Theorem 6.3.11]{CM93}). Note that 
$$[(\prod_{(\chi, m, \alpha) \in \textbf{e}} m^2) (\prod_{i=1}^t n_i^2)(([\prod_{j=1}^l(2m_j)]^+)_{\SO_{2k+1}})]$$
is indeed an orthogonal partition. 
Hence we obtain that 
\begin{align*}
&\left([(\prod_{(\chi, m, \alpha) \in \textbf{e}} m^2) (\prod_{i=1}^t n_i^2)(([\prod_{j=1}^l(2m_j)]^+)_{\SO_{2k+1}})]^t\right)_{\SO_{2n+1}}\\
&\qquad=\left([(\prod_{(\chi, m, \alpha) \in \textbf{e}} m^2) (\prod_{i=1}^t n_i^2)(([\prod_{j=1}^l(2m_j)]^+)_{\SO_{2k+1}})]^{\SO_{2n+1}}\right)^t.
\end{align*}
Therefore, we only need to show that
\begin{equation}\label{keylemma3equ1}
   (\ul{p}(\psi)^+)_{\SO_{2n+1}} = [(\prod_{(\chi, m, \alpha) \in \textbf{e}} m^2) (\prod_{i=1}^t n_i^2)(([\prod_{j=1}^l(2m_j)]^+)_{\SO_{2k+1}})]^{\SO_{2n+1}}. 
\end{equation}
Note that the partition $([\prod_{j=1}^l(2m_j)]^+)_{\SO_{2k+1}}$ 
is equal to 
\begin{align*}
   [(2m_l+1)(2m_{l-1}-1)(2m_{l-2}+1) \cdots (2m_3-1)(2m_2+1)(2m_1-1)1], 
\end{align*}
where we omit the ``$(2m_1-1)1$"-term if $2m_1=0$. 

We are going to rewrite the partition $[(\prod_{(\chi, m, \alpha) \in \textbf{e}} m^2) (\prod_{i=1}^t n_i^2)]$ as
$[k_s^2k_{s-1}^2\cdots k_1^2]$ with $k_s \geq k_{s-1} \geq \cdots \geq k_1$. 
To proceed, we separate into the following cases: 
\begin{enumerate}
    \item $k_s \leq 2m_l$;
    \item $k_s > 2m_l$.
\end{enumerate}
In each case, for $1 \leq j \leq \frac{l-2}{2}$, we list all the different even $k_i$'s between $2m_{2j+1}$ and $2m_{2j}$ as 
$$2m_{2j+1} > k_j^1 > k_j^2 > \cdots > k_j^{s_j} > 2m_{2j}.$$

\quad

{\bf Case (1):} $k_s \leq 2m_l$. We have
$$\ul{p}(\psi)^+=[(k_s^2\cdots k_1^2)(2m_l+1)\prod_{j=1}^{l-1}(2m_j)].$$
If $2m_1\neq 0$, to carry out the $\SO_{2n+1}$-collapse of
$\ul{p}(\psi)^+$, 
we also need to list all the different even $k_i$'s between $2m_{1}$ and $0$ as 
$$2m_{1} > k_0^1 > k_0^2 > \cdots > k_0^{s_0} > 0.$$
Then $(\ul{p}(\psi)^+)_{\SO_{2n+1}}$ is obtained from $\ul{p}(\psi)^+$ via replacing $(2m_{2j+1}, 2m_{2j})$ by $(2m_{2j+1}-1, 2m_{2j}+1)$ and $k_j^{i,2}$ by $(k_j^i+1,k_j^i-1)$, for $1 \leq j \leq \frac{l-2}{2}$ and $1 \leq i \leq s_j$; and then 
replacing $(2m_{1}, 0)$ by $(2m_{1}-1, 1)$ and $k_0^{i,2}$ by $(k_0^i+1,k_0^i-1)$ with $1 \leq i \leq s_0$, if $2m_1\neq 0$.
On the other hand, we have 
\begin{align*}
    &[k_s^2\cdots k_1^2(([\prod_{j=1}^l(2m_j)]^+)_{\SO_{2k+1}})]^{\SO_{2n+1}}\\
    &\quad=[k_s^2\cdots k_1^2(2m_l+1)(2m_{l-1}-1)(2m_{l-2}+1) \cdots (2m_1-1)1]^{\SO_{2n+1}},
\end{align*}
which is obtained from 
$$[k_s^2\cdots k_1^2(2m_l+1)(2m_{l-1}-1)(2m_{l-2}+1) \cdots (2m_1-1)1]$$ via 
replacing $k_j^{i,2}$ by $(k_j^i+1,k_j^i-1)$ for $1 \leq j \leq \frac{l-2}{2}$, $1 \leq i \leq s_j$; and then replacing 
$k_0^{i,2}$ by $(k_0^i+1,k_0^i-1)$, $1 \leq i \leq s_0$.
Hence \eqref{keylemma3equ1} holds in this case.

\quad

{\bf Case (2):} $k_s > 2m_l$. We have
$$\ul{p}(\psi)^+=[((k_s+1)k_sk_{s-1}^2\cdots k_1^2)(\prod_{j=1}^l(2m_j))].$$
To carry out the $\SO_{2n+1}$-collapse of
$\ul{p}(\psi)^+$, we also need to list 
all the different even $k_i$'s between $k_s$ and $2m_{l}$ as 
$$k_s > k_l^1 > k_l^2 > \cdots > k_l^{s_l} > 2m_{l},$$ and if $2m_1\neq 0$, list all the different even $k_i$'s between $2m_{1}$ and $0$ as 
$$2m_{1} > k_0^1 > k_0^2 > \cdots > k_0^{s_0} > 0.$$
Then $(\ul{p}(\psi)^+)_{\SO_{2n+1}}$ is obtained from $\ul{p}(\psi)^+$ via replacing $(2m_{2j+1}, 2m_{2j})$ by $(2m_{2j+1}-1, 2m_{2j}+1)$ and $k_j^{i,2}$ by $(k_j^i+1,k_j^i-1)$ for $1 \leq j \leq \frac{l-2}{2}$ and $1 \leq i \leq s_j$; and  
replacing $(k_s+1, 2m_l)$ by $(k_s, 2m_l+1)$ if $k_s$ is odd and $(k_s, 2m_l)$ by $(k_s-1, 2m_l+1)$ if $k_s$ is even, and $k_l^{i,2}$ by $(k_l^i+1,k_l^i-1)$ with $1 \leq i \leq s_l$; 
and finally 
replacing $(2m_{1}, 0)$ by $(2m_{1}-1, 1)$ and $k_0^{i,2}$ by $(k_0^i+1,k_0^i-1)$ for $1 \leq i \leq s_0$, if $2m_1\neq 0$.
On the other hand, we have 
\begin{align*}
    &[k_s^2\cdots k_1^2(([\prod_{j=1}^l(2m_j)]^+)_{\SO_{2k+1}})]^{\SO_{2n+1}}\\
    &\quad=[k_s^2\cdots k_1^2(2m_l+1)(2m_{l-1}-1)(2m_{l-2}+1) \cdots (2m_1-1)1]^{\SO_{2n+1}},
\end{align*}
which is obtained from 
$$[k_s^2\cdots k_1^2(2m_l+1)(2m_{l-1}-1)(2m_{l-2}+1) \cdots (2m_1-1)1]$$ via 
replacing $k_s^2$ by $(k_s+1,k_s-1)$ if $k_s$ is even and  $k_j^{i,2}$ by $(k_j^i+1,k_j^i-1)$ for $1 \leq j \leq \frac{l-2}{2}$ and $1 \leq i \leq s_j$; and
replacing $k_l^{i,2}$ by $(k_l^i+1,k_l^i-1)$ for $1 \leq i \leq s_l$;
and finally replacing 
$k_0^{i,2}$ by $(k_0^i+1,k_0^i-1)$ for $1 \leq i \leq s_0$.
Hence \eqref{keylemma3equ1} still holds in this case. 

This completes the proof of the lemma. 
\end{proof}

The proof of Theorem \ref{main2} has been completed for $G_n=\SO_{2n+1}$.

\section{Proof of Theorem \ref{main2}, $G_n=\RO_{2n}$}

By the assumption of  Theorem \ref{main2}, $\sigma$ is of 
{\bf Type (I)} and is of the following form:

\begin{equation*}
\sigma = \times_{(\chi, m, \alpha) \in \textbf{e}}
v^{\alpha} \chi({\det}_m) \rtimes \sigma_{neg},
\end{equation*}
where $\sigma_{neg}$ is the unique irreducible negative unramified  subrepresentation
of the following induced representation
\begin{equation*}
\chi_1 ({\det}_{{n_1}}) \times \cdots \times \chi_t ({\det}_{{n_t}}) \rtimes \sigma_{sn},
\end{equation*}
with $\sigma_{sn}$ being the unique strongly negative unramified constituent of the following induced representation:
\begin{align*}
\begin{split}
& \nu^{\frac{m_{l-1}-m_l}{2}} 1_{{\det}_{m_{l-1}+m_l+1}}
\times
\nu^{\frac{m_{l-3}-m_{l-2}}{2}} 1_{{\det}_{m_{l-3}+m_{l-2}+1}}\\
&\qquad\times \cdots \times
\nu^{\frac{m_{1}-m_2}{2}} 1_{{\det}_{m_{1}+m_2+1}}
\rtimes 1_{\RO_0}.
\end{split}
\end{align*}
Also recall that $l$ is even, and $0 < m_1 < m_2 < \cdots < m_l$. 

As in Sections 5 and 6, by Lemma \ref{keylemma}, we have 
\begin{align*}
    &\frak{p}^m(\times_{(\chi, m, \alpha) \in \textbf{e}}
v^{\alpha} \chi({\det}_m) \times \chi_1 ({\det}_{{n_1}}) \times \cdots \times \chi_t ({\det}_{{n_t}}))\\
&\qquad=\{[(\prod_{(\chi, m, \alpha) \in \textbf{e}} m) (\prod_{i=1}^t n_i)]^t\}.
\end{align*}
By Theorem \ref{keylemma0}, Lemma \ref{keylemma}, and by \cite[Theorem 7.3.3]{CM93} on formula for induced nilpotent orbits, any $\ul{p} \in \frak{p}^m(\sigma)$ has the following upper bound
$$
\ul{p} \leq \left(2[(\prod_{(\chi, m, \alpha) \in \textbf{e}} m) (\prod_{i=1}^t n_i)]^t + \eta_{\frak{o}_{2k}, \frak{o}_{2k}}([\prod_{j=1}^l(2m_j+1)])\right)_{O_{2n}}
$$
with $2k=\sum_{i=1}^l (2m_i+1)$. 

To prove Theorem \ref{main2} in this case, it suffices to show the following lemma.

\begin{lem}\label{keylemma4}
The following identity
$$\eta_{\frak{o}_{2n}, \frak{o}_{2n}} (\ul{p}(\psi)) = \left(2[(\prod_{(\chi, m, \alpha) \in \textbf{e}} m) (\prod_{i=1}^t n_i)]^t + \eta_{\frak{o}_{2k}, \frak{o}_{2k}}([\prod_{j=1}^l(2m_j+1)])\right)_{\RO_{2n}}$$
holds with  $2k=\sum_{i=1}^l (2m_i+1)$.
\end{lem}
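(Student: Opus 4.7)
The plan is to adapt the strategy of Lemmas \ref{keylemma2} and \ref{keylemma3} to the self-dual type-$D$ setting. Since $\frak{o}_{2n}^\vee=\frak{o}_{2n}$, no dimension shift $\ul{p}^\pm$ is required, and the Barbasch-Vogan-Spaltenstein duality takes the clean form
\[
\eta_{\frak{o}_{2n},\frak{o}_{2n}}(\ul{p})\;=\;(\ul{p}^t)_{\RO_{2n}},
\qquad
\eta_{\frak{o}_{2k},\frak{o}_{2k}}([\textstyle\prod_j(2m_j+1)])\;=\;([\textstyle\prod_j(2m_j+1)]^t)_{\RO_{2k}}.
\]
Note that $\ul{p}(\psi)$ is already an orthogonal partition of $2n$: its even parts arise only from the factors $m^2$ and $n_i^2$, each contributing multiplicity divisible by $2$, which is the defining condition for type-$D$ partitions.

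First, I would apply the transpose-collapse identity for type $D$ (the analogue of the identity from \cite[Proof of Theorem 6.3.11]{CM93} invoked for $\Sp_{2n}$ in Section 5), together with the concatenation-transpose identity $(2\ul{a}\sqcup\ul{b})^t=2\ul{a}^t+\ul{b}^t$ relating termwise addition and multiset concatenation under transpose, to rewrite both sides of Lemma \ref{keylemma4} as the transpose of a single $\RO_{2n}$-partition. This reduces Lemma \ref{keylemma4} to an identity between two explicit orthogonal partitions of $2n$, formally parallel to \eqref{keylemma2equ1} and \eqref{keylemma3equ1} in the $\Sp_{2n}$ and $\SO_{2n+1}$ cases.

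Second, I would rewrite $[(\prod_{(\chi,m,\alpha)\in\textbf{e}} m^2)(\prod_{i=1}^t n_i^2)]$ as $[k_s^2\cdots k_1^2]$ with $k_s\geq\cdots\geq k_1$ and split into two cases according to whether $k_s\leq 2m_l+1$ or $k_s>2m_l+1$, mirroring the case analysis in Sections 5 and 6. In each case, I would list the odd $k_i$'s interleaving the consecutive odd Jordan parts $(2m_{2j+1}+1,2m_{2j}+1)$ for $1\leq j\leq l/2-1$, and verify by explicit termwise comparison that the local moves defining the $\RO_{2n}$-operation agree on the two sides of the reduced identity. The inner $\RO_{2k}$-operation applied to $[\prod_j(2m_j+1)]$ accounts for the pairings among the $l$ (with $l$ even) consecutive odd Jordan parts, exactly as the $\Sp_{2k}$-operation in Section 5 and the $\SO_{2k+1}$-operation in Section 6 do; the outer $\RO_{2n}$-operation then absorbs the interleaving odd $k_i$'s.

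The main obstacle is the type-$D$ phenomenon of very even partitions (those with all parts even and each occurring with even multiplicity), which parametrize pairs of geometric nilpotent orbits rather than a single orbit. At the partition level, however, the Spaltenstein duality remains a well-defined map from partitions to partitions, and Lemma \ref{keylemma4} is stated at precisely this level, so this geometric splitting does not intrude. A secondary technical issue is the boundary case $k_s=2m_l+1$, which should be absorbed into Case (2) with careful bookkeeping of the multiplicities of odd $k_i$'s lying just above $2m_l$, exactly as in Section 5. Once the case split is set up, the verification is essentially mechanical, following the template of Lemmas \ref{keylemma2} and \ref{keylemma3}.
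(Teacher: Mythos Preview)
Your reduction step has a genuine gap. The ``transpose-collapse identity for type $D$'' you invoke---the analogue of $(\ul{p}^{\Sp_{2n}})^t=(\ul{p}^t)_{\Sp_{2n}}$ from Section~5---is simply false for $\RO_{2n}$. For instance, with $\ul{p}=[3,1]$ in $\RO_4$ one has $\ul{p}^{\RO_4}=[3,1]$ so $(\ul{p}^{\RO_4})^t=[2,1,1]$, whereas $(\ul{p}^t)_{\RO_4}=([2,1,1])_{\RO_4}=[1,1,1,1]$. So you cannot rewrite both sides as transposes of $\RO_{2n}$-partitions and then strip the transpose to reduce to a parallel of \eqref{keylemma2equ1} or \eqref{keylemma3equ1}. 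Consequently the subsequent case split on $k_s$ versus $2m_l+1$, which in Sections~5 and~6 was carried out on the \emph{non-transposed} side after such a reduction, has no foundation here.

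The paper avoids this obstruction by a genuinely different route. It keeps both sides in the form $(\,\cdot\,)_{\RO_{2n}}$ and instead invokes \cite[Lemma~3.3]{Ac03}, which for an orthogonal partition $\ul{p}$ of $2n$ gives $(\ul{p}^t)_{\RO_{2n}}=((\ul{p}^{+-})_{\Sp_{2n}})^t$; this converts the inner $\RO_{2k}$-duality into a $\Sp_{2k}$-collapse of $[\prod_j(2m_j+1)]^{+-}$, which is computed explicitly. The paper then writes the transposed partition $\ul{p}(\psi)^t$ in a parity-indexed block form $[p_{l}^1\cdots p_0^{m_0}]$, applies the $\RO_{2n}$-collapse recipe of \cite[Lemma~6.3.8]{CM93} directly to both sides, and matches them block by block---with \emph{no} case split on the relative size of the $k_i$'s and the Jordan parts. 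The type-$D$ argument is thus structurally different from the $\Sp_{2n}$ and $\SO_{2n+1}$ proofs, not a mechanical transcription of them.
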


\begin{proof}
Recall that
$$\ul{p}(\psi)=[(\prod_{(\chi, m, \alpha) \in \textbf{e}} m^2) (\prod_{i=1}^t n_i^2)(\prod_{j=1}^l(2m_j+1))]$$
and
$$\eta_{\frak{o}_{2n}, \frak{o}_{2n}} (\ul{p}(\psi)) = (\ul{p}(\psi)^t)_{\RO_{2n}}.$$
Also recall that given any partition $\ul{p}=[p_r \cdots p_1]$ with $p_r \geq \cdots \geq p_1$, we have 
\begin{align*}
    \ul{p}^+&=[(p_r+1) \cdots p_1],\\
    \ul{p}^-&=[p_r \cdots (p_1-1)].
\end{align*}
By \cite[Lemma 3.3]{Ac03}, given a partition $\ul{p}$ of $2n$, 
if it is an orthogonal partition or its transpose is a symplectic partition, then 
$(\ul{p}^t)_{\RO_{2n}}=((\ul{p}^{+-})_{\Sp_{2n}})^t$. 
Hence we obtain that 
\begin{align*}
    &\left(2[(\prod_{(\chi, m, \alpha) \in \textbf{e}} m) (\prod_{i=1}^t n_i)]^t + \eta_{\frak{o}_{2k}, \frak{o}_{2k}}([\prod_{j=1}^l(2m_j+1)])\right)_{\RO_{2n}}\\
    &\quad=\left(2[(\prod_{(\chi, m, \alpha) \in \textbf{e}} m) (\prod_{i=1}^t n_i)]^t + ([\prod_{j=1}^l(2m_j+1)]^t)_{\RO_{2k}}\right)_{\RO_{2n}}\\
     &\quad\quad=\left(2[(\prod_{(\chi, m, \alpha) \in \textbf{e}} m) (\prod_{i=1}^t n_i)]^t + (([\prod_{j=1}^l(2m_j+1)]^{+-})_{\Sp_{2k}})^t\right)_{\RO_{2n}}.
\end{align*}
It is easy to see that
\begin{align*}
    &[(\prod_{(\chi, m, \alpha) \in \textbf{e}} m^2) (\prod_{i=1}^t n_i^2)(\prod_{j=1}^l(2m_j+1))]^t\\
   &\qquad\qquad= 2[(\prod_{(\chi, m, \alpha) \in \textbf{e}} m) (\prod_{i=1}^t n_i)]^t + ([\prod_{j=1}^l(2m_j+1)])^t
\end{align*}
is a partition of the following form
$$[p_l^1 \cdots p_l^{2m_1+1} (\prod_{j=1}^{l-1} p_{j}^1\cdots p_{j}^{2m_{l+1-j}-2m_{l-j}})p_0^1 \cdots p_0^{m_0}],$$
where $p_l^{i}$ with $1\leq i \leq 2m_1+1$, $p_{2j}^i$ with $1\leq i \leq 2m_{l+1-2j}-2m_{l-2j}$ and $1\leq j \leq \frac{l-2}{2}$, and $p_0^k$ with $1 \leq k \leq m_0$  
are all even; and $p_{2j+1}^i$ with $1\leq i \leq 2m_{l-2j}-2m_{l-2j-1}$ and $0\leq j \leq \frac{l-2}{2}$
are all odd; and finally $p_l^1 \geq \cdots \geq p_l^{2m_1+1}>p_{l-1}^1$, $p_j^1 \geq \cdots \geq p_j^{2m_{l+1-j}-2m_{l-j}} > p_{j-1}^1$, $1 \leq j \leq l-1$ with  $p_0^1 \geq \cdots \geq p_0^{m_0}$. 
Note that
\begin{align*}
\left([\prod_{j=1}^l(2m_j+1)]^{+-}\right)_{\Sp_{2k}}
&=
\left[(2m_l+2)\prod_{j=2}^{l-1}(2m_j+1)(2m_1)\right]_{\Sp_{2k}}\\
&= \left[(2m_l+2)\prod_{j=1}^{\frac{l-2}{2}}(2m_{2j+1})(2m_{2j}+2)(2m_1)\right].
\end{align*}
Then the partition 
\[
2[(\prod_{(\chi, m, \alpha) \in \textbf{e}} m) (\prod_{i=1}^t n_i)]^t + (([\prod_{j=1}^l(2m_j+1)]^{+-})_{\Sp_{2k}})^t
\]
is equal to the following partition
\begin{align*}
    &\,[p_l^1 \cdots p_l^{2m_1} (p_l^{2m_1+1}-1)\prod_{j=0}^{\frac{l-2}{2}} p_{2j+1}^1\cdots p_{2j+1}^{2m_{l-2j}-2m_{l-2j-1}}\\
    &\qquad\prod_{j=1}^{\frac{l-2}{2}}(p_{2j}^1+1)p_{2j}^2\cdots p_{2j}^{2m_{l+1-2j}-2m_{l-2j}-1}(p_{2j}^{2m_{l+1-2j}-2m_{l-2j}}-1)\\
    &\qquad\qquad(p_0^1+1) p_0^2 \cdots p_0^{m_0}].
\end{align*}
Following the recipe on carrying out the $\RO_{2n}$-collapse (\cite[Lemma 6.3.8]{CM93}), we obtain that the partition 
\[
\left([(\prod_{(\chi, m, \alpha) \in \textbf{e}} m^2) (\prod_{i=1}^t n_i^2)(\prod_{j=1}^l(2m_j+1))]^t\right)_{\RO_{2n}}
\]
is equal to the 
following partition
\begin{align*}
    &[(p_l^1 \cdots p_l^{2m_1})_\RO (p_l^{2m_1+1}-1)\prod_{j=0}^{\frac{l-2}{2}} p_{2j+1}^1\cdots p_{2j+1}^{2m_{l-2j}-2m_{l-2j-1}}\\
    &\qquad\prod_{j=1}^{\frac{l-2}{2}}(p_{2j}^1+1)(p_{2j}^2\cdots p_{2j}^{2m_{l+1-2j}-2m_{l-2j}-1})_\RO(p_{2j}^{2m_{l+1-2j}-2m_{l-2j}}-1)\\
    &\qquad\qquad(p_0^1+1) (p_0^2 \cdots p_0^{m_0})_\RO],
\end{align*}
and the partition 
\[
\left(2[(\prod_{(\chi, m, \alpha) \in \textbf{e}} m) (\prod_{i=1}^t n_i)]^t + (([\prod_{j=1}^l(2m_j+1)]^{+-})_{\Sp_{2k}})^t\right)_{\RO_{2n}}
\]
can be written as 
\begin{align*}
    & \bigg(p_l^1 \cdots p_l^{2m_1} (p_l^{2m_1+1}-1)\prod_{j=0}^{\frac{l-2}{2}} p_{2j+1}^1\cdots p_{2j+1}^{2m_{l-2j}-2m_{l-2j-1}}\\
    &\qquad\prod_{j=1}^{\frac{l-2}{2}}(p_{2j}^1+1)p_{2j}^2\cdots p_{2j}^{2m_{l+1-2j}-2m_{l-2j}-1}(p_{2j}^{2m_{l+1-2j}-2m_{l-2j}}-1)\\
    &\qquad\qquad(p_0^1+1) p_0^2\cdots p_0^{m_0}\bigg)_{\RO_{2n}}
\end{align*}
which is equal to 
\begin{align*}
    &[(p_l^1 \cdots p_l^{2m_1})_\RO (p_l^{2m_1+1}-1)\prod_{j=0}^{\frac{l-2}{2}} p_{2j+1}^1\cdots p_{2j+1}^{2m_{l-2j}-2m_{l-2j-1}}\\
    &\qquad \prod_{j=1}^{\frac{l-2}{2}}(p_{2j}^1+1)(p_{2j}^2\cdots p_{2j}^{2m_{l+1-2j}-2m_{l-2j}-1})_\RO(p_{2j}^{2m_{l+1-2j}-2m_{l-2j}}-1)\\
    &\qquad\qquad(p_0^1+1) (p_0^2 \cdots p_0^{m_0})_\RO].
\end{align*}
Hence we obtain that
$$\eta_{\frak{o}_{2n}, \frak{o}_{2n}} (\ul{p}(\psi)) = \left(2[(\prod_{(\chi, m, \alpha) \in \textbf{e}} m) (\prod_{i=1}^t n_i)]^t + \eta_{\frak{o}_{2k}, \frak{o}_{2k}}([\prod_{j=1}^l(2m_j+1)])\right)_{\RO_{2n}}.$$
This completes the proof of the lemma. 
\end{proof}

The proof of Theorem \ref{main2} has been completed for $G_n=\RO_{2n}$.

\section{On the wave front set of unramified unitary representations}

In this last section, we study the wave front set of the unramified unitary representations for split classical groups $G_n=\Sp_{2n}, \SO_{2n+1}, \RO_{2n}$. Under assumptions on the leading orbits in the wave front set of negative representations, we determine the set $\frak{p}^m(\pi)$ for general unramified unitary representations. This reduction has its own interests. 

Assume that $\pi$ is any irreducible unramified unitary representation of $G_n(F)$ as in Theorem 
\ref{thm10},
$$\pi= \times_{(\chi, m, \alpha) \in \textbf{e}}
v^{\alpha} \chi({\det}_m) \rtimes \sigma_{neg},$$
where $\sigma_{neg}$ is a negative representation of $G_{n^*}(F)$, and
\begin{align*}
\rm{Jord}(\sigma_{neg}) &= \rm{Jord}(\sigma_{sn}) \cup \{(\chi_i,n_i),
(\chi_i^{-1},n_i) | 1 \leq i \leq t\}.
\end{align*}
Here $\rm{Jord}(\sigma_{sn})$ is equal to 
\begin{align*}
 \left\{(\lambda_0, 2n_1+1), \ldots, (\lambda_0, 2n_k+1), (1_{\GL_1}, 2m_1+1),
\ldots, (1_{\GL_1}, 2m_l+1)\right\},
\end{align*}
when $G_{n^*}=\Sp_{2{n^*}}, \RO_{2{n^*}}$; and is equal to 
\begin{align*}
\left\{(\lambda_0, 2n_1), \ldots, (\lambda_0, 2n_k), (1_{\GL_1}, 2m_1),
\ldots, (1_{\GL_1}, 2m_l)\right\},
\end{align*}
when $G_{n^*}=\SO_{2{n^*}+1}$, as in Section \ref{unramified unitary dual}. 

We have the following conjecture on the maximal partitions in the wave front set of negative representations. 

\begin{conj}\label{conj-neg}
\begin{align*}
   \frak{p}^m(\sigma_{neg})= \left\{\eta_{\frak{g}_{{n^*}}^\vee, \frak{g}_{{n^*}}} ([(\prod_{j=1}^t n_j^2) (\prod_{i=1}^l (2m_i+1)\prod_{s=1}^k (2n_s+1))])\right\}, 
\end{align*}
when $G_{n^*}=\Sp_{2{n^*}}, \RO_{2{n^*}}$;
\begin{align*}
\frak{p}^m(\sigma_{neg})= \left\{\eta_{\frak{g}_{{n^*}}^\vee, \frak{g}_{{n^*}}} ([(\prod_{j=1}^t n_j^2) (\prod_{i=1}^l (2m_i)\prod_{s=1}^k (2n_s))])\right\},
\end{align*}
when $G_{n^*}=\SO_{2{n^*}+1}$.
\end{conj}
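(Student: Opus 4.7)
The strategy I propose combines two main ingredients: the compatibility of maximal wave-front orbits with parabolic induction (Lemma \ref{keylemma}), and an extension of Okada's theorem (Theorem \ref{keylemma0}) to strongly negative representations whose Jordan blocks can include the non-trivial quadratic character $\lambda_0$. The conjecture will follow by combining these with the induced-orbit formula of \cite[Theorem 7.3.3]{CM93}, in exactly the spirit of the arguments in Sections 5--7.

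First I would handle the upper bound. Writing $\sigma_{neg}$ as a subrepresentation of
$$\chi_1({\det}_{n_1}) \times \cdots \times \chi_t({\det}_{n_t}) \rtimes \sigma_{sn},$$
Lemma \ref{keylemma} identifies $\frak{p}^m$ of the full induced representation with the set of induced orbits from maximal orbits on the Levi. Since $\frak{p}^m(\chi_i({\det}_{n_i}))=\{[1^{n_i}]\}$ by Moeglin--Waldspurger, computing the induced nilpotent orbit via \cite[Theorem 7.3.3]{CM93} yields an explicit upper bound for $\frak{p}^m(\sigma_{neg})$. A combinatorial check, entirely parallel to Lemmas \ref{keylemma2}, \ref{keylemma3}, \ref{keylemma4}, shows that this upper bound equals the predicted partition, \emph{provided} one knows the leading orbit of $\sigma_{sn}$ for strongly negative unramified representations with arbitrary Jordan data $\rm{Jord}(\sigma_{sn})$ (not just the trivial-character case handled by \cite{Oka21}).

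The main obstacle is therefore extending Theorem \ref{keylemma0} to allow $\lambda_0$-blocks. I would pursue one of two routes. The first is an endoscopic or twisted character comparison identifying $\sigma_{sn}$ with mixed Jordan blocks to a representation on a related (possibly smaller) group where Okada's hypothesis applies; the wave-front set transports under such comparisons in a controlled way, because the relevant partition depends only on the block shape, not on the unramified character itself. The second route is to rerun Okada's argument directly: the construction \eqref{sec7equ2-1}--\eqref{sec7equ2-3} treats $\lambda_0$ and $1_{\GL_1}$ blocks structurally identically, and the Harish-Chandra--Howe character expansion that drives \cite[Theorem 1.5]{Oka21} should not distinguish the two choices of quadratic unramified character. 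Making either route rigorous is where I expect most of the work to lie, since it touches on unipotent support, Aubert duality, and the structure of the discrete unramified spectrum of the endoscopic groups.

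Finally, for the sharpness (lower bound) step, I would exhibit a non-zero generalized Whittaker-Fourier coefficient for $\sigma_{neg}$ at the conjectured maximal orbit. Starting from a non-zero Fourier coefficient of $\sigma_{sn}$ at its leading orbit (provided by the extended Okada theorem) and from the Whittaker models of the GL-factors $\chi_i({\det}_{n_i})$, a Rodier-type heredity together with an explicit computation of the degenerate Whittaker functional along the inducing parabolic should transport non-vanishing to $\sigma_{neg}$. This step is largely formal once the strongly negative case is in hand; the main obstacle, as emphasized above, is the extension of Theorem \ref{keylemma0}, which is the genuinely new local input the conjecture requires.
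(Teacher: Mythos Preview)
The statement you are addressing is explicitly a \emph{conjecture} in the paper (Conjecture \ref{conj-neg}) and is not proved there. The paper offers no argument for it; it only remarks after Theorem \ref{main3} that the work of Ciubotaru, Mason-Brown, and Okada \cite{CMO21} on Iwahori-spherical representations with real infinitesimal character partially confirms it. There is therefore no proof in the paper to compare your proposal against.

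As a strategy toward the conjecture, your outline is sensible and in the spirit of Sections 5--7, but two points deserve more caution. First, you are right that the main new local input is an extension of Theorem \ref{keylemma0} to strongly negative $\sigma_{sn}$ whose Jordan blocks involve $\lambda_0$; this is precisely what places the statement outside the scope of \cite{Oka21}, and neither of your proposed routes is routine. Second, your lower-bound step is not ``largely formal.'' The representation $\sigma_{neg}$ is only a \emph{subrepresentation} of $\chi_1({\det}_{n_1}) \times \cdots \times \chi_t({\det}_{n_t}) \rtimes \sigma_{sn}$, so Lemma \ref{keylemma} gives at best an upper bound for $\frak{p}^m(\sigma_{neg})$. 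Rodier-type heredity produces a non-zero degenerate Whittaker functional on the full induced module, but you must still show it restricts non-trivially to $\sigma_{neg}$ rather than being supported on another constituent. That non-vanishing on the specific subrepresentation is exactly the delicate point, and it is part of why the paper leaves the statement as a conjecture. Incidentally, the combinatorial identity you would need (matching the induced orbit from $\frak{p}^m(\sigma_{sn})$ to the Barbasch--Vogan dual of the full Jordan partition) is a variant of Lemma \ref{keylemma5} with the roles of the two layers of induction shifted, so that part is indeed parallel to what the paper already does.
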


Based on Conjecture \ref{conj-neg}, we obtain the explicit description on the maximal partitions in the wave-front set of 
general irreducible unramified unitary  representations $\pi$ of 
$G_n(F)$. 

\begin{thm}\label{main3}
Assume Conjecture \ref{conj-neg} is true. 
For any irreducible unramified unitary  representation $\pi$ of 
$G_n(F)$, the maximal partitions in the wave-front set $\Fp(\pi)$ are given as follows:
\begin{align*}
   \frak{p}^m(\pi)= \left\{\eta_{\frak{g}_{{n}}^\vee, \frak{g}_{{n}}} ([(\prod_{(\chi,m,\alpha) \in \textbf{e}} m^2)(\prod_{j=1}^t n_j^2) (\prod_{i=1}^l (2m_i+1)\prod_{s=1}^k (2n_s+1))])\right\}
\end{align*}
when $G_{n}=\Sp_{2{n}}, \RO_{2{n}}$; and 
\begin{align*}
\frak{p}^m(\pi)= \left\{\eta_{\frak{g}_{{n}}^\vee, \frak{g}_{{n}}} ([(\prod_{(\chi,m,\alpha) \in \textbf{e}} m^2)(\prod_{j=1}^t n_j^2) (\prod_{i=1}^l (2m_i)\prod_{s=1}^k (2n_s))])\right\}
\end{align*}
when $G_{n}=\SO_{2{n}+1}$.
\end{thm}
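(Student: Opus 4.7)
The plan is to apply Lemma \ref{keylemma} to reduce the computation of $\frak{p}^m(\pi)$ to Conjecture \ref{conj-neg} combined with the induced-orbit formula, and then to verify a combinatorial identity that generalizes Lemmas \ref{keylemma2}--\ref{keylemma4}.

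First, by Theorem \ref{thm10}, realize $\pi$ as the unique irreducible unramified constituent of the parabolically induced representation $\Ind_Q^{G_n(F)}\delta$, where $Q=MN$ has Levi
\[
M=\prod_{(\chi,m,\alpha)\in\textbf{e}}\GL_m(F)\times G_{n^*}(F),
\]
and
\[
\delta=\bigg(\bigotimes_{(\chi,m,\alpha)\in\textbf{e}}v^{\alpha}\chi({\det}_m)\bigg)\otimes\sigma_{neg}.
\]
Each GL factor $v^{\alpha}\chi({\det}_m)$ is one-dimensional and hence contributes the zero orbit with partition $[1^m]$ to $\frak{n}^m(\delta)$; by Conjecture \ref{conj-neg}, the $\sigma_{neg}$-factor contributes a single orbit with partition $\eta_{\frak{g}_{n^*}^\vee,\frak{g}_{n^*}}(\ul{p}_{neg}')$, where
\[
\ul{p}_{neg}'=[(\prod_j n_j^2)(\prod_i(2m_i+1)\prod_s(2n_s+1))]
\]
(with the analogous even-part version for $\SO_{2n+1}$). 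Applying Lemma \ref{keylemma} together with \cite[Theorem 7.3.3]{CM93}, as in the proof of Theorem \ref{main2}, identifies $\frak{n}^m(\Ind_Q^{G_n(F)}\delta)$ with the single induced orbit of partition
\[
\bigg(2\big[\prod_{(\chi,m,\alpha)\in\textbf{e}}m\big]^t+\eta_{\frak{g}_{n^*}^\vee,\frak{g}_{n^*}}(\ul{p}_{neg}')\bigg)_{G_n}.
\]
Since $\pi$ is the unique unramified constituent and Lemma \ref{keylemma} is an equality at the level of maximal orbits, one transfers the induced maximal orbit onto $\pi$ to obtain the reverse inclusion as well.

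The heart of the proof is then the combinatorial identity
\[
\eta_{\frak{g}_n^\vee,\frak{g}_n}(\ul{p}_\pi)=\bigg(2\big[\prod_{(\chi,m,\alpha)\in\textbf{e}}m\big]^t+\eta_{\frak{g}_{n^*}^\vee,\frak{g}_{n^*}}(\ul{p}_{neg}')\bigg)_{G_n},
\]
where $\ul{p}_\pi$ is the partition appearing in the statement of Theorem \ref{main3}. This is a direct extension of Lemmas \ref{keylemma2}, \ref{keylemma3}, and \ref{keylemma4}: the only new feature is that $\ul{p}_{neg}'$ now also carries the parts $(2n_s+1)$ (resp.\ $2n_s$) arising from the $\lambda_0$ Jordan blocks of $\sigma_{sn}$, in addition to the $(2m_i+1)$- or $2m_i$-parts handled there. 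The strategy is to reproduce the recipe from Sections 5--7: rewrite $[(\prod_{(\chi,m,\alpha)\in\textbf{e}}m^2)(\prod_j n_j^2)]$ in the form $[k_s^2\cdots k_1^2]$ with $k_s\geq\cdots\geq k_1$, list the $k_j^i$'s falling between consecutive Jordan-block lengths of the appropriate parity, and track the $\Sp$-, $\SO$-, or $\RO$-collapse explicitly on both sides.

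The hard part will be the case analysis in this combinatorial identity. With $\ul{p}_{neg}'$ interleaving parts from two sources, both the $1_{\GL_1}$-blocks and the $\lambda_0$-blocks of $\sigma_{sn}$, the number of possible configurations of the $k_j^i$'s relative to the various Jordan-block lengths grows noticeably. Each individual configuration can be handled by the same elementary bookkeeping as in Sections 5--7, and the argument reduces to verifying case-by-case that the collapse procedures produce the same output on both sides; no new technical tools are required, only a careful and somewhat lengthy enumeration.
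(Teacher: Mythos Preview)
Your overall approach matches the paper's: apply Lemma~\ref{keylemma} to the induced representation, invoke Conjecture~\ref{conj-neg} for $\sigma_{neg}$, and reduce to a combinatorial identity (the paper's Lemma~\ref{keylemma5}). Two points need correction, however.

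First, the step where you ``transfer the induced maximal orbit onto $\pi$'' because $\pi$ is the unique unramified constituent is not valid reasoning: for a proper subquotient, Lemma~\ref{keylemma} only gives an upper bound, and uniqueness of the unramified constituent does not by itself force equality of maximal orbits. What actually saves you is that by Theorem~\ref{thm10} the induced representation $\times_{(\chi,m,\alpha)\in\textbf{e}}v^{\alpha}\chi({\det}_m)\rtimes\sigma_{neg}$ is already irreducible, so it \emph{equals} $\pi$. The paper uses this directly and never needs a transfer argument.

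Second, your description of the combinatorial strategy does not match the identity you set up. You propose to rewrite $[(\prod_{(\chi,m,\alpha)\in\textbf{e}}m^2)(\prod_j n_j^2)]$ jointly as $[k_s^2\cdots k_1^2]$, as in Sections~5--7. But the right-hand side of your identity contains the inner term $\eta_{\frak{g}_{n^*}^\vee,\frak{g}_{n^*}}(\ul{p}'_{neg})$, and $\ul{p}'_{neg}$ still carries the $n_j^2$ parts; its collapse must be computed \emph{before} the $\textbf{e}$-parts are adjoined. So the $m$'s from $\textbf{e}$ and the $n_j$'s from $\rm{Jord}(\sigma_{neg})$ must be kept separate. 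The paper does exactly this: it writes $[\prod_{(\chi,m,\alpha)}m^2]=[p_u^2\cdots p_1^2]$ and $[\prod_j n_j^2]=[q_v^2\cdots q_1^2]$ independently, merges the strongly-negative parts $(2m_i+1)$ and $(2n_s+1)$ into a single list $(2r_w+1)$, and then runs a case analysis on the relative positions of the $p$'s, $q$'s, and $r$'s. The bookkeeping is of the same flavor as Sections~5--7 but with one extra layer of case distinction (the paper splits into four cases according to $q_1$ versus $2r_1+1$ and $p_1$ versus $\min(q_1,2r_1+1)$ for $\Sp_{2n}$, and dually for $\SO_{2n+1}$; the $\RO_{2n}$ case is handled by a different, more uniform argument using \cite[Lemma~3.1]{Ac03}).
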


We remark that Ciubotaru, Mason-Brown, and Okada (\cite{CMO21}) recently computed the maximal orbits in the wave front set of irreducible Iwahori-spherical representations
of split reductive $p$-adic groups
with ``real infinitesimal characters", which partially proved
Conjecture \ref{conj-neg} and Theorem \ref{main3}. This provides evidence for Conjecture \ref{conj-neg}.

By Lemma \ref{keylemma}, we have 
\begin{align*}
\frak{p}^m(\times_{(\chi, m, \alpha) \in \textbf{e}}
v^{\alpha} \chi({\det}_m))
=\left\{+_{(\chi, m, \alpha) \in \textbf{e}} [1^m]\right\}
=
\{[(\prod_{(\chi, m, \alpha) \in \textbf{e}} m)]^t\}.
\end{align*}
By Lemma \ref{keylemma}, and by \cite[Theorem 7.3.3]{CM93} on formula for induced nilpotent orbits, we obtain that 
$$\frak{p}^m(\pi)=\{ (2[(\prod_{(\chi, m, \alpha) \in \textbf{e}} m)]^t + \ul{p}_{\sigma_{neg}})_{G_{n}} \big| \ul{p}_{\sigma_{neg}} \in \frak{p}^m(\sigma_{neg})\}.$$
Hence, by the assumption, to prove Theorem \ref{main3}, it suffices to show the following lemma which will be proved case-by-case in the following subsections. 

\begin{lem}\label{keylemma5}
The following identities hold:
\begin{align*}
&\eta_{\frak{g}_{{n}}^\vee, \frak{g}_{{n}}} ([(\prod_{(\chi,m,\alpha) \in \textbf{e}} m^2)(\prod_{j=1}^t n_j^2) (\prod_{i=1}^l (2m_i+1)\prod_{s=1}^k (2n_s+1))])\\
&\quad= \left(2[(\prod_{(\chi, m, \alpha) \in \textbf{e}} m)]^t + \eta_{\frak{g}_{{n^*}}^\vee, \frak{g}_{{n^*}}} [(\prod_{j=1}^t n_j^2) (\prod_{i=1}^l (2m_i+1)\prod_{s=1}^k (2n_s+1))]\right)_{G_{n}}
\end{align*}
when $G_{n}=\Sp_{2{n}}, \RO_{2{n}}$; and 
\begin{align*}
&\eta_{\frak{g}_{{n}}^\vee, \frak{g}_{{n}}} ([(\prod_{(\chi,m,\alpha) \in \textbf{e}} m^2)(\prod_{j=1}^t n_j^2) (\prod_{i=1}^l (2m_i)\prod_{s=1}^k (2n_s))])\\
&\qquad= \left(2[(\prod_{(\chi, m, \alpha) \in \textbf{e}} m)]^t + \eta_{\frak{g}_{{n^*}}^\vee, \frak{g}_{{n^*}}} ([(\prod_{j=1}^t n_j^2) (\prod_{i=1}^l (2m_i)\prod_{s=1}^k (2n_s))])\right)_{G_{n}}
\end{align*}
when $G_{n}=\SO_{2{n}+1}$.
\end{lem}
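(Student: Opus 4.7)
The plan is to prove Lemma \ref{keylemma5} in essentially the same combinatorial style as the already-established Lemmas \ref{keylemma2}, \ref{keylemma3}, and \ref{keylemma4}, replacing the partition $[\prod (2m_i+1)]$ (or $[\prod (2m_i)]$) that appeared in the Type (I) analysis by the enlarged partition
$[\prod (2m_i+1)\prod (2n_s+1)]$ (or $[\prod (2m_i)\prod (2n_s)]$) that incorporates the additional Jordan blocks of $\lambda_0$-type. The first observation is that $2[\prod m]^t=[\prod m^2]^t$, since doubling each row of a Young diagram is transpose-dual to duplicating each row. Consequently both sides of the claimed identity can be phrased purely in terms of transposes and $G$-collapses (or $G$-expansions) applied to partitions of the same total size, bringing Lemma \ref{keylemma5} into the same shape as the earlier three lemmas.

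For $G_n=\Sp_{2n}$ I will use $\eta_{\frak{so}_{2n+1},\frak{sp}_{2n}}(\underline{p})=((\underline{p}^-)_{\Sp_{2n}})^t$, for $G_n=\SO_{2n+1}$ the formula $\eta_{\frak{sp}_{2n},\frak{so}_{2n+1}}(\underline{p})=((\underline{p}^+)_{\SO_{2n+1}})^t$, and for $G_n=\RO_{2n}$ the formula $\eta_{\frak{o}_{2n},\frak{o}_{2n}}(\underline{p})=(\underline{p}^t)_{\RO_{2n}}$ together with the Aczic identity used in Section 7. In each case the compatibility $(\underline{q}_G)^t=(\underline{q}^t)^G$ moves the transpose across the collapse, so the identity reduces to showing that two partitions obtained by local corrections of consecutive wrong-parity pairs coincide. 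This is precisely the argument carried out in Lemmas \ref{keylemma2}, \ref{keylemma3}, \ref{keylemma4}: write $[(\prod m^2)(\prod n_j^2)]=[k_s^2k_{s-1}^2\cdots k_1^2]$ with $k_s\geq\cdots\geq k_1$, list the $k_i$'s sitting between consecutive parts of $\{2m_i+1\}\cup\{2n_s+1\}$ (resp. $\{2m_i\}\cup\{2n_s\}$), and carry out the symplectic/orthogonal collapse on both sides of the identity simultaneously, matching term by term.

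The case analysis subdivides, just as in the earlier proofs, according to whether the extremal part of $[(\prod m^2)(\prod n_j^2)]$ lies above or below the extremal part of the merged list $\{2m_i+1\}\cup\{2n_s+1\}$ (respectively $\{2m_i\}\cup\{2n_s\}$). Within each subcase, the collapse step replaces a pair of consecutive parts of the wrong parity by a pair of the correct parity, in a way that is local in the partition and therefore commutes with the addition of the further $2[\prod m]^t$ columns appearing on the right-hand side.

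The main technical obstacle is combinatorial bookkeeping in the interleaving of the two groups of new parts. In the Type (I) setting the sequence $m_1<m_2<\cdots<m_l$ was strictly increasing, which made the enumeration of intermediate $k_i$'s clean. In the present setting the merged multiset $\{2m_i+1\}\cup\{2n_s+1\}$ can have repetitions (whenever $m_i=n_s$) and is no longer strictly increasing as a sequence coming from a single source. However, repeated odd parts in a symplectic partition already occur with even multiplicity, and repeated even parts in an orthogonal partition similarly occur with even multiplicity, so the collapse operation requires no correction at such repeated pairs; they pass through the collapse unchanged on both sides of the identity. Once this local analysis is carried out carefully, the matching of collapses verifies the desired identity and completes the proof.
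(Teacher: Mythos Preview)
Your proposal has a structural misreading of the identity that makes the plan prove the wrong statement. In Lemmas \ref{keylemma2}--\ref{keylemma4} the inner Barbasch--Vogan dual on the right-hand side is applied only to the strongly negative partition $[\prod(2m_j+1)]$ (resp.\ $[\prod(2m_j)]$), and \emph{both} square blocks $(\prod m^2)$ and $(\prod n_j^2)$ sit outside, contributing to the single term $2[(\prod m)(\prod n_j)]^t$. In Lemma \ref{keylemma5} the situation is different: the inner dual $\eta_{\frak{g}_{n^*}^\vee,\frak{g}_{n^*}}$ is applied to $[(\prod n_j^2)(\prod(2m_i+1))(\prod(2n_s+1))]$, so the $n_j^2$ block lives \emph{inside} the inner term, while only the $m^2$ block from $\textbf{e}$ is added outside via $2[\prod m]^t$. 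Your step ``write $[(\prod m^2)(\prod n_j^2)]=[k_s^2\cdots k_1^2]$'' merges the two blocks and thereby sets up the same reduction as in the Type~(I) lemmas; carried through, it would establish
\[
\eta_{\frak{g}_n^\vee,\frak{g}_n}\bigl([(\prod m^2)(\prod n_j^2)\prod(2r_w+1)]\bigr)
=\Bigl(2[(\prod m)(\prod n_j)]^t+\eta\bigl([\prod(2r_w+1)]\bigr)\Bigr)_{G_n},
\]
which is not the claim of Lemma \ref{keylemma5}.

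The paper's proof keeps the two blocks separate: write $[\prod_{(\chi,m,\alpha)}m^2]=[p_u^2\cdots p_1^2]$ and $[\prod n_j^2]=[q_v^2\cdots q_1^2]$, merge only the odd (resp.\ even) singletons into $[\prod(2r_w+1)]$, and reduce (for $\Sp_{2n}$, say) to
\[
\Bigl([\,(\prod p_i^2)(\prod q_j^2)\prod(2r_w+1)\,]^-\Bigr)_{\Sp_{2n}}
=\Bigl((\prod p_i^2)\,\bigl([(\prod q_j^2)\prod(2r_w+1)]^-\bigr)_{\Sp_{2n^*}}\Bigr)^{\Sp_{2n}}.
\]
The right-hand side is a $\Sp_{2n^*}$-collapse nested inside a $\Sp_{2n}$-expansion, and the case analysis (four cases, according to the relative positions of $p_1$, $q_1$, and $2r_1+1$) must track the $p$'s and $q$'s separately, since they play asymmetric roles. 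Your interleaving argument does not address this nested collapse/expansion structure. A salvageable route along your lines would be to apply the Type~(I) identity twice---once to compute $\eta_{n^*}$ and once to compute $\eta_n$---and then verify the associativity
\[
\Bigl(2[\prod m]^t+\bigl(2[\prod n_j]^t+X\bigr)_{G_{n^*}}\Bigr)_{G_n}
=\Bigl(2[(\prod m)(\prod n_j)]^t+X\Bigr)_{G_n},
\]
but that is itself a nontrivial identity about iterated induced orbits and is not what you have outlined.
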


\subsection{Proof of Lemma \ref{keylemma5}, $G_n=\Sp_{2n}$}

By similar arguments as in the proof of the $\Sp_{2n}$-case of Lemma \ref{keylemma2}, we only need to show that 
\begin{align}\label{keylemma5equ1}
\begin{split}
&\left([(\prod_{(\chi,m,\alpha) \in \textbf{e}} m^2)(\prod_{j=1}^t n_j^2) (\prod_{i=1}^l (2m_i+1)\prod_{s=1}^k (2n_s+1))]^-\right)_{\Sp_{2n}}\\
&\quad=\left((\prod_{(\chi, m, \alpha) \in \textbf{e}} m^2) ([(\prod_{j=1}^t n_j^2) (\prod_{i=1}^l (2m_i+1)\prod_{s=1}^k (2n_s+1))]^-)_{\Sp_{2n^*}}\right)^{\Sp_{2n}}.
\end{split}
\end{align}
For any given partition $\ul{p}=[p_r \cdots p_1]$ with $p_r \geq \cdots \geq p_1$, we recall that $\ul{p}^-=[p_r \cdots (p_1-1)]$.
Rewrite the partition $[\prod_{(\chi, m, \alpha) \in \textbf{e}} m^2]$ as
$[p_u^2p_{u-1}^2\cdots p_1^2]$ with $p_u \geq p_{u-1} \geq \cdots \geq p_1$; and 
$[\prod_{i=1}^t n_i^2]$ as
$[q_{v}^2q_{v-1}^2\cdots q_1^2]$ with $q_{v} \geq q_{v-1} \geq \cdots \geq q_1$.
And rewrite $[(\prod_{i=1}^l (2m_i+1)\prod_{s=1}^k (2n_s+1))]$ as $[\prod_{w=1}^{l+k} (2r_w+1)]$
with $r_{l+k} \geq r_{l+k-1} \geq \cdots \geq r_{1} > 0$. Then, \eqref{keylemma5equ1} becomes
\begin{align}\label{keylemma5equ4}
\begin{split}
&\left([(\prod_{i=1}^u p_1^2)(\prod_{j=1}^v q_j^2) \prod_{w=1}^{l+k} (2r_w+1)]^-\right)_{\Sp_{2n}}\\
&\qquad=\left((\prod_{i=1}^u p_1^2)([(\prod_{j=1}^v q_j^2) \prod_{w=1}^{l+k} (2r_w+1)]^-)_{\Sp_{2n^*}}\right)^{\Sp_{2n}}.
\end{split}
\end{align}
% By the same observation as in the proof of Lemma \ref{keylemma2}, without loss of generality, we assume that $p_u, q_v < 2r_{l+k}+1$. 
To proceed, we separate into the  following cases:
\begin{enumerate}
    \item When $q_1 \geq 2r_1+1$, we have (a) $p_1 \geq 2r_1+1$ 
    and (b) $p_1 < 2r_1+1$.
    \item When $q_1 < 2r_1+1$, we have (a) $p_1 \geq q_1$, and 
    (b)$p_1 < q_1$.
\end{enumerate}
In each case, for $1 \leq z \leq \frac{l+k-1}{2}$, if $2r_{2z+1}+1> 2r_{2z}+1$, 
we list all the different odd $p_i$'s, $q_j$'s between $2r_{2z+1}+1$ and $2r_{2z}+1$ as 
\begin{align*}
   &\, 2r_{2z+1}+1 > p_z^1 > p_z^2 > \cdots > p_z^{x_z} > 2r_{2z}+1,\\
   &\, 2r_{2z+1}+1 > q_z^1 > q_z^2 > \cdots > q_z^{y_z} > 2r_{2z}+1.
\end{align*}

\quad

{\bf Case (1-a):} $q_1 \geq 2r_1+1$, $p_1 \geq 2r_1+1$. We have 
\begin{align*}
\begin{split}
&\left([(\prod_{i=1}^u p_1^2)(\prod_{j=1}^v q_j^2) \prod_{w=1}^{l+k} (2r_w+1)]^-\right)_{\Sp_{2n}}\\
&\qquad=\left[(\prod_{i=1}^u p_1^2)(\prod_{j=1}^v q_j^2) \prod_{w=2}^{l+k} (2r_w+1)(2r_1)\right]_{\Sp_{2n}}.
\end{split}
\end{align*}
The collapse $[(\prod_{i=1}^u p_1^2)(\prod_{j=1}^v q_j^2) \prod_{w=2}^{l+k} (2r_w+1)(2r_1)]_{\Sp_{2n}}$ can be obtained from 
\[
[(\prod_{i=1}^u p_1^2)(\prod_{j=1}^v q_j^2) \prod_{w=2}^{l+k} (2r_w+1)(2r_1)]
\]
via  replacing $(2r_{2z+1}+1, 2r_{2z}+1)$ by $(2r_{2z+1}, 2r_{2z}+2)$, $p_z^{i,2}$ by $(p_z^i+1,p_z^i-1)$, and $q_z^{j,2}$ by $(q_z^j+1,q_z^j-1)$, for $1 \leq z \leq \frac{l+s-1}{2}$, $1 \leq i \leq x_z$, and $1 \leq j \leq y_z$, whenever $2r_{2z+1}+1> 2r_{2z}+1$. 
On the other hand, we have 
\begin{align}\label{keylemma5equ3}
\begin{split}
&\left((\prod_{i=1}^u p_1^2)([(\prod_{j=1}^v q_j^2) \prod_{w=1}^{l+k} (2r_w+1)]^-)_{\Sp_{2n^*}}\right)^{\Sp_{2n}}\\
    &\qquad=\left((\prod_{i=1}^u p_1^2)[(\prod_{j=1}^v q_j^2) \prod_{w=2}^{l+k} (2r_w+1)(2r_1)]_{\Sp_{2n^*}}\right)^{\Sp_{2n}}. 
    \end{split}
\end{align}
Then $[(\prod_{j=1}^v q_j^2) \prod_{w=2}^{l+k} (2r_w+1)(2r_1)]_{Sp_{2n^*}}$ can be obtained from 
$$[(\prod_{j=1}^v q_j^2) \prod_{w=2}^{l+k} (2r_w+1)(2r_1)]$$
via replacing $(2r_{2z+1}+1, 2r_{2z}+1)$ by $(2r_{2z+1},  2r_{2z}+2)$ and $q_z^{j,2}$ by $(q_z^j+1,q_z^j-1)$, for $1 \leq z \leq \frac{l+s-1}{2}$ and $1 \leq j \leq y_z$, whenever $2r_{2z+1}+1> 2r_{2z}+1$.
And the partition 
\[
\left((\prod_{i=1}^u p_1^2)[(\prod_{j=1}^v q_j^2) \prod_{w=2}^{l+k} (2r_w+1)(2r_1)]_{\Sp_{2n^*}}\right)^{\Sp_{2n}}
\]
can be obtained from $[(\prod_{i=1}^u p_1^2)[(\prod_{j=1}^v q_j^2) \prod_{w=2}^{l+k} (2r_w+1)(2r_1)]_{\Sp_{2n^*}}]$ via replacing $p_z^{i,2}$ by $(p_z^i+1,p_z^i-1)$ for $1 \leq z \leq \frac{l+s-1}{2}$ and $1 \leq i \leq x_z$, whenever $2r_{2z+1}+1> 2r_{2z}+1$.
Hence \eqref{keylemma5equ4} holds in this case. 

\quad

{\bf Case (1-b):} $q_1 \geq 2r_1+1$, $p_1 < 2r_1+1$. We have 
\begin{align}\label{keylemma5equ2}
\begin{split}
&\left([(\prod_{i=1}^u p_1^2)(\prod_{j=1}^v q_j^2) \prod_{w=1}^{l+k} (2r_w+1)]^-\right)_{\Sp_{2n}}\\
&\qquad=\left[(\prod_{i=2}^u p_1^2)(\prod_{j=1}^v q_j^2) \prod_{w=1}^{l+k} (2r_w+1)p_1(p_1-1)\right]_{\Sp_{2n}}.
\end{split}
\end{align}
To carry out the $\Sp_{2n}$-collapse, 
we also need to list all the different odd $p_i$'s, $q_j$'s between $2r_{1}+1$ and $p_1$ as 
\begin{align*}
   &\, 2r_{1}+1 > p_0^1 > p_0^2 > \cdots > p_0^{x_0} > p_1,\\
   &\, 2r_{1}+1 > q_0^1 > q_0^2 > \cdots > q_0^{y_0} > p_1.
\end{align*} 
Then $[(\prod_{i=2}^u p_1^2)(\prod_{j=1}^v q_j^2) \prod_{w=1}^{l+k} (2r_w+1)p_1(p_1-1)]_{\Sp_{2n}}$ can be obtained from 
\[
[(\prod_{i=2}^u p_1^2)(\prod_{j=1}^v q_j^2) \prod_{w=1}^{l+k} (2r_w+1)p_1(p_1-1)]
\]
via replacing $(2r_{2z+1}+1, 2r_{2z}+1)$ by $(2r_{2z+1}, 2r_{2z}+2)$, $p_z^{i,2}$ by $(p_z^i+1,p_z^i-1)$, and $q_z^{j,2}$ by $(q_z^j+1,q_z^j-1)$, for $1 \leq z \leq \frac{l+s-1}{2}$, $1 \leq i \leq x_z$, and $1 \leq j \leq y_z$, whenever $2r_{2z+1}+1> 2r_{2z}+1$; and replacing $(2r_{1}+1, p_1-1)$ by $(2r_{1}, p_1)$ if $p_1$ is even and $(2r_{1}+1, p_1)$ by $(2r_{1}, p_1+1)$ if $p_1$ is odd; and finally replacing 
$p_0^{i,2}$ by $(p_0^i+1,p_0^i-1)$ and $q_0^{j,2}$ by $(q_0^j+1,q_0^j-1)$, for $1 \leq i \leq x_0$ and $1 \leq j \leq y_0$. 

On the other hand, as in {\bf Case (1-a)}, we still have \eqref{keylemma5equ3}. 
Then $[(\prod_{j=1}^v q_j^2) \prod_{w=2}^{l+k} (2r_w+1)(2r_1)]_{\Sp_{2n^*}}$ can be obtained from 
$$[(\prod_{j=1}^v q_j^2) \prod_{w=2}^{l+k} (2r_w+1)(2r_1)]$$
via replacing $(2r_{2z+1}+1, 2r_{2z}+1)$ by $(2r_{2z+1}, 2r_{2z}+2)$ and $q_z^{j,2}$ by $(q_z^j+1,q_z^j-1)$, for $1 \leq z \leq \frac{l+s-1}{2}$ and $1 \leq j \leq y_z$, whenever $2r_{2z+1}+1> 2r_{2z}+1$.
And the partition 
\[
\left((\prod_{i=1}^u p_1^2)[(\prod_{j=1}^v q_j^2) \prod_{w=2}^{l+k} (2r_w+1)(2r_1)]_{\Sp_{2n^*}}\right)^{\Sp_{2n}}
\]
can be obtained from $[(\prod_{i=1}^u p_1^2)[(\prod_{j=1}^v q_j^2) \prod_{w=2}^{l+k} (2r_w+1)(2r_1)]_{\Sp_{2n^*}}]$ via replacing $p_z^{i,2}$ by $(p_z^i+1,p_z^i-1)$ for $1 \leq z \leq \frac{l+s-1}{2}$ and $1 \leq i \leq x_z$, whenever $2r_{2z+1}+1> 2r_{2z}+1$; and then replacing $p_1^2$ by $(p_1+1,p_1-1)$ if $p_1$ is odd, 
$p_0^{i,2}$ by $(p_0^i+1,p_0^i-1)$, and $q_0^{j,2}$ by $(q_0^j+1,q_0^j-1)$ for  $1 \leq i \leq x_0$ and $1 \leq j \leq y_0$.
Hence \eqref{keylemma5equ4} holds in this case. 

\quad

{\bf Case (2-a):} $q_1 < 2r_1+1$, $p_1 \geq q_1$. We have 
\begin{align*}
\begin{split}
&\left([(\prod_{i=1}^u p_1^2)(\prod_{j=1}^v q_j^2) \prod_{w=1}^{l+k} (2r_w+1)]^-\right)_{\Sp_{2n}}\\
&\qquad=\left[(\prod_{i=1}^u p_1^2)(\prod_{j=2}^v q_j^2) \prod_{w=1}^{l+k} (2r_w+1)q_1(q_1-1)\right]_{\Sp_{2n}}.
\end{split}
\end{align*}
To carry out the $\Sp_{2n}$-collapse, 
we also need to list all the different odd $p_i$'s, $q_j$'s between $2r_{1}+1$ and $q_1$ as 
\begin{align*}
   &\, 2r_{1}+1 > p_0^1 > p_0^2 > \cdots > p_0^{x_0} > q_1,\\
   &\, 2r_{1}+1 > q_0^1 > q_0^2 > \cdots > q_0^{y_0} > q_1.
\end{align*} 
Then $[(\prod_{i=1}^u p_1^2)(\prod_{j=2}^v q_j^2) \prod_{w=1}^{l+k} (2r_w+1)q_1(q_1-1)]_{\Sp_{2n}}$ can be obtained from 
\[
[(\prod_{i=1}^u p_1^2)(\prod_{j=2}^v q_j^2) \prod_{w=1}^{l+k} (2r_w+1)q_1(q_1-1)]
\]
via replacing $(2r_{2z+1}+1, 2r_{2z}+1)$ by $(2r_{2z+1}, 2r_{2z}+2)$, $p_z^{i,2}$ by $(p_z^i+1,p_z^i-1)$, and $q_z^{j,2}$ by $(q_z^j+1,q_z^j-1)$, for $1 \leq z \leq \frac{l+s-1}{2}$, $1 \leq i \leq x_z$, and $1 \leq j \leq y_z$, whenever $2r_{2z+1}+1> 2r_{2z}+1$; and replacing $(2r_{1}+1, q_1-1)$ by $(2r_{1}, q_1)$ if $q_1$ is even, $(2r_{1}+1, q_1)$ by $(2r_{1}, q_1+1)$ if $q_1$ is odd; and finally replacing
$p_0^{i,2}$ by $(p_0^i+1,p_0^i-1)$ and $q_0^{j,2}$ by $(q_0^j+1,q_0^j-1)$ for $1 \leq i \leq x_0$ and $1 \leq j \leq y_0$.
On the other hand, we have 
\begin{align}\label{keylemma5equ5}
\begin{split}
&\left((\prod_{i=1}^u p_1^2)([(\prod_{j=1}^v q_j^2) \prod_{w=1}^{l+k} (2r_w+1)]^-)_{\Sp_{2n^*}}\right)^{\Sp_{2n}}\\
    &\qquad=\left((\prod_{i=1}^u p_1^2)[(\prod_{j=2}^v q_j^2) \prod_{w=1}^{l+k} (2r_w+1)q_1(q_1-1)]_{\Sp_{2n^*}}\right)^{\Sp_{2n}}. 
    \end{split}
\end{align}
Then $[(\prod_{j=2}^v q_j^2) \prod_{w=1}^{l+k} (2r_w+1)q_1(q_1-1)]_{\Sp_{2n^*}}$ can be obtained from 
$$[(\prod_{j=2}^v q_j^2) \prod_{w=1}^{l+k} (2r_w+1)q_1(q_1-1)]$$
via replacing $(2r_{2z+1}+1, 2r_{2z}+1)$ by $(2r_{2z+1}, 2r_{2z}+2)$ and $q_z^{j,2}$ by $(q_z^j+1,q_z^j-1)$ for $1 \leq z \leq \frac{l+s-1}{2}$ and $1 \leq j \leq y_z$, whenever $2r_{2z+1}+1> 2r_{2z}+1$; and then replacing $(2r_{1}+1, q_1-1)$ by $(2r_{1}, q_1)$ if $q_1$ is even and $(2r_{1}+1, q_1)$ by $(2r_{1}, q_1+1)$ if $q_1$ is odd, and $q_0^{j,2}$ by $(q_0^j+1,q_0^j-1)$ for $1 \leq j \leq y_0$.
And the partition 
\[
\left((\prod_{i=1}^u p_1^2)[(\prod_{j=2}^v q_j^2) \prod_{w=1}^{l+k} (2r_w+1)q_1(q_1-1)]_{\Sp_{2n^*}}\right)^{\Sp_{2n}}
\]
can be obtained from $[(\prod_{i=1}^u p_1^2)[(\prod_{j=2}^v q_j^2) \prod_{w=1}^{l+k} (2r_w+1)q_1(q_1-1)]_{Sp_{2n^*}}]$ via replacing $p_z^{i,2}$ by $(p_z^i+1,p_z^i-1)$ for $1 \leq z \leq \frac{l+s-1}{2}$ and $1 \leq i \leq x_z$, whenever $2r_{2z+1}+1> 2r_{2z}+1$; and then replacing $p_0^{i,2}$ by $(p_0^i+1,p_0^i-1)$ for  $1 \leq i \leq x_0$.
Hence \eqref{keylemma5equ4} holds in this case.

\quad

{\bf Case (2-b):} $q_1 < 2r_1+1$, $p_1 < q_1$. We have 
\begin{align*}
\begin{split}
&\left([(\prod_{i=1}^u p_1^2)(\prod_{j=1}^v q_j^2) \prod_{w=1}^{l+k} (2r_w+1)]^-\right)_{\Sp_{2n}}\\
&\qquad=\left[(\prod_{i=2}^u p_1^2)(\prod_{j=1}^v q_j^2) \prod_{w=1}^{l+k} (2r_w+1)p_1(p_1-1)\right]_{\Sp_{2n}}.
\end{split}
\end{align*}
To carry out the $\Sp_{2n}$-collapse, 
we also need to list all the different odd $p_i$'s, $q_j$'s between $2r_{1}+1$ and $p_1$ as 
\begin{align*}
   &\, 2r_{1}+1 > p_0^1 > p_0^2 > \cdots > p_0^{x_0} > p_1,\\
   &\, 2r_{1}+1 > q_0^1 > q_0^2 > \cdots > q_0^{y_0} > p_1.
\end{align*} 
Then $[(\prod_{i=2}^u p_1^2)(\prod_{j=1}^v q_j^2) \prod_{w=1}^{l+k} (2r_w+1)p_1(p_1-1)]_{\Sp_{2n}}$ can be obtained from 
\[
[(\prod_{i=2}^u p_1^2)(\prod_{j=1}^v q_j^2) \prod_{w=1}^{l+k} (2r_w+1)p_1(p_1-1)]
\]
via replacing $(2r_{2z+1}+1, 2r_{2z}+1)$ by $(2r_{2z+1}, 2r_{2z}+2)$, $p_z^{i,2}$ by $(p_z^i+1,p_z^i-1)$, and $q_z^{j,2}$ by $(q_z^j+1,q_z^j-1)$, for $1 \leq z \leq \frac{l+s-1}{2}$, $1 \leq i \leq x_z$, and $1 \leq j \leq y_z$, whenever $2r_{2z+1}+1> 2r_{2z}+1$; and then replacing $(2r_{1}+1, p_1-1)$ by $(2r_{1}, p_1)$ if $p_1$ is even, $(2r_{1}+1, p_1)$ by $(2r_{1}, p_1+1)$ if $p_1$ is odd, 
% $q_1^2$ by $(q_1+1,q_1-1)$ if $q_1$ is odd, 
and also $p_0^{i,2}$ by $(p_0^i+1,p_0^i-1)$ and $q_0^{j,2}$ by $(q_0^j+1,q_0^j-1)$ for  $1 \leq i \leq x_0$ and $1 \leq j \leq y_0$.

On the other hand, we still have \eqref{keylemma5equ5}.
Then the partition 
\[
\left[(\prod_{j=2}^v q_j^2) \prod_{w=1}^{l+k} (2r_w+1)q_1(q_1-1)\right]_{\Sp_{2n^*}}
\]
can be obtained from 
$$[(\prod_{j=2}^v q_j^2) \prod_{w=1}^{l+k} (2r_w+1)q_1(q_1-1)]$$
via replacing $(2r_{2z+1}+1, 2r_{2z}+1)$ by $(2r_{2z+1}, 2r_{2z}+2)$ and $q_z^{j,2}$ by $(q_z^j+1,q_z^j-1)$ for $1 \leq z \leq \frac{l+s-1}{2}$ and $1 \leq j \leq y_z$, whenever $2r_{2z+1}+1> 2r_{2z}+1$; and then replacing $(2r_{1}+1, q_1-1)$ by $(2r_{1}, q_1)$ if $q_1$ is even, $(2r_{1}+1, q_1)$ by $(2r_{1}, q_1+1)$ if $q_1$ is odd, and also $q_0^{j,2}$ by $(q_0^j+1,q_0^j-1)$ if $q_0^j \neq q_1$ and $1 \leq j \leq y_0$.
And the partition 
\[
\left((\prod_{i=1}^u p_1^2)[(\prod_{j=2}^v q_j^2) \prod_{w=1}^{l+k} (2r_w+1)q_1(q_1-1)]_{\Sp_{2n^*}}\right)^{\Sp_{2n}}
\]
can be obtained from $[(\prod_{i=1}^u p_1^2)[(\prod_{j=2}^v q_j^2) \prod_{w=1}^{l+k} (2r_w+1)q_1(q_1-1)]_{\Sp_{2n^*}}]$ via replacing $p_z^{i,2}$ by $(p_z^i+1,p_z^i-1)$ for $1 \leq z \leq \frac{l+s-1}{2}$ and $1 \leq i \leq x_z$, whenever $2r_{2z+1}+1> 2r_{2z}+1$; and then replacing 
$p_1^2$ by $(p_1+1,p_1-1)$ if $p_1$ is odd, 
$p_0^{i,2}$ by $(p_0^i+1,p_0^i-1)$ for $1 \leq i \leq x_0$.
Hence \eqref{keylemma5equ1} holds in this case.

The proof of Lemma \ref{keylemma5} has been completed for $G_n=\Sp_{2n}$.

\subsection{Proof of Lemma \ref{keylemma5}, $G_n=\SO_{2n+1}$}

By similar arguments as in the proof of the $\SO_{2n+1}$-case of Lemma \ref{keylemma2}, we only need to show that 
\begin{align}\label{keylemma5equ21}
&\left([(\prod_{(\chi,m,\alpha) \in \textbf{e}} m^2)(\prod_{j=1}^t n_j^2) (\prod_{i=1}^l (2m_i)\prod_{s=1}^k (2n_s))]^+\right)_{\SO_{2n+1}}\\
&\quad\qquad=\left((\prod_{(\chi, m, \alpha) \in \textbf{e}} m^2) ([(\prod_{j=1}^t n_j^2) (\prod_{i=1}^l (2m_i)\prod_{s=1}^k (2n_s))]^+)_{\SO_{2n^*+1}}\right)^{\SO_{2n+1}}.\nonumber
\end{align}
For any given partition $\ul{p}=[p_r \cdots p_1]$ with $p_r \geq \cdots \geq p_1$, recall that $\ul{p}^+=[(p_r+1) \cdots p_1]$.
Rewrite the partition $[\prod_{(\chi, m, \alpha) \in \textbf{e}} m^2]$ as
$[p_u^2p_{u-1}^2\cdots p_1^2]$ with $p_u \geq p_{u-1} \geq \cdots \geq p_1$; and the partition 
$[\prod_{i=1}^t n_i^2]$ as
$[q_{v}^2q_{v-1}^2\cdots q_1^2]$ with $q_{v} \geq q_{v-1} \geq \cdots \geq q_1$.
And rewrite the partition $[(\prod_{i=1}^l (2m_i)\prod_{s=1}^k (2n_s))]$ as $[\prod_{w=1}^{l+k} (2r_w)]$
with $r_{l+k} \geq r_{l+k-1} \geq \cdots \geq r_{1} \geq 0$. Then, \eqref{keylemma5equ21} becomes
\begin{align}\label{keylemma5equ22}
\begin{split}
&\left([(\prod_{i=1}^u p_1^2)(\prod_{j=1}^v q_j^2) \prod_{w=1}^{l+k} (2r_w)]^+\right)_{\SO_{2n+1}}\\
&\qquad= \left((\prod_{i=1}^u p_1^2)([(\prod_{j=1}^v q_j^2) \prod_{w=1}^{l+k} (2r_w)]^+)_{\SO_{2n^*+1}}\right)^{\SO_{2n+1}}.
\end{split}
\end{align}
To proceed, we separate into the  following cases:
\begin{enumerate}
    \item When $q_v \leq 2r_{l+k}$, we have (a) $p_u \leq 2r_{l+k}$ and (b) $p_u > 2r_{l+k}$. 
    \item When $q_v > 2r_{l+k}$, we have (a) $p_u \leq q_v$ and 
    (b) $p_u > q_v$.
\end{enumerate}
In each case, for $1 \leq z \leq \frac{l+k-2}{2}$, if $2r_{2z+1}> 2r_{2z}$, 
we list all the different even $p_i$'s, $q_j$'s between $2r_{2z+1}$ and $2r_{2z}$ as 
\begin{align*}
   &\, 2r_{2z+1} > p_z^1 > p_z^2 > \cdots > p_z^{x_z} > 2r_{2z},\\
   &\, 2r_{2z+1} > q_z^1 > q_z^2 > \cdots > q_z^{y_z} > 2r_{2z}.
\end{align*}
If $2r_1\neq0$, we also list all the different even $p_i$'s, $q_j$'s between $2r_{1}$ and $0$ as 
\begin{align*}
   &\, 2r_{1} > p_0^1 > p_0^2 > \cdots > p_0^{x_0} > 0,\\
   &\, 2r_{1} > q_0^1 > q_0^2 > \cdots > q_0^{y_0} > 0.
\end{align*}

\quad

{\bf Case (1-a):} $q_v \leq 2r_{l+k}$, $p_u \leq 2r_{l+k}$. We have 
\begin{align*}
\begin{split}
& \left([(\prod_{i=1}^u p_1^2)(\prod_{j=1}^v q_j^2) \prod_{w=1}^{l+k} (2r_w)]^+\right)_{\SO_{2n+1}}\\
&\qquad= \left[(\prod_{i=1}^u p_1^2)(\prod_{j=1}^v q_j^2) (2r_{l+k}+1)\prod_{w=1}^{l+k-1} (2r_w)\right]_{\SO_{2n+1}}.
\end{split}
\end{align*}
The collapse $[(\prod_{i=1}^u p_1^2)(\prod_{j=1}^v q_j^2) (2r_{l+k}+1)\prod_{w=1}^{l+k-1} (2r_w)]_{\SO_{2n+1}}$ can be obtained from 
\[
[(\prod_{i=1}^u p_1^2)(\prod_{j=1}^v q_j^2) (2r_{l+k}+1)\prod_{w=1}^{l+k-1} (2r_w)]
\]
via  replacing $(2r_{2z+1}, 2r_{2z})$ by $(2r_{2z+1}-1, 2r_{2z}+1)$, $p_z^{i,2}$ by $(p_z^i+1,p_z^i-1)$, and $q_z^{j,2}$ by $(q_z^j+1,q_z^j-1)$, for $1 \leq z \leq \frac{l+s-2}{2}$, $1 \leq i \leq x_z$, and $1 \leq j \leq y_z$, whenever $2r_{2z+1}> 2r_{2z}$; and then replacing $(2r_{1}, 0)$ by $(2r_{1}-1, 1)$, $p_0^{i,2}$ by $(p_0^i+1,p_0^i-1)$, and $q_0^{j,2}$ by $(q_0^j+1,q_0^j-1)$, for $1 \leq i \leq x_0$ and $1 \leq j \leq y_0$, if $2r_1\neq 0$.

On the other hand, we have 
\begin{align}\label{keylemma5equ23}
\begin{split}
&\left((\prod_{i=1}^u p_1^2)([(\prod_{j=1}^v q_j^2) \prod_{w=1}^{l+k} (2r_w)]^+)_{\SO_{2n^*+1}}\right)^{\SO_{2n+1}}\\
    &\qquad=\left((\prod_{i=1}^u p_1^2)[(\prod_{j=1}^v q_j^2) (2r_{l+k}+1)\prod_{w=1}^{l+k-1} (2r_w)]_{\SO_{2n^*+1}}\right)^{\SO_{2n+1}}. 
    \end{split}
\end{align}
Then $[(\prod_{j=1}^v q_j^2) (2r_{l+k}+1)\prod_{w=1}^{l+k-1} (2r_w)]_{\SO_{2n^*+1}}$ can be obtained from 
$$[(\prod_{j=1}^v q_j^2) (2r_{l+k}+1)\prod_{w=1}^{l+k-1} (2r_w)]$$
via replacing $(2r_{2z+1}, 2r_{2z})$ by $(2r_{2z+1}-1, 2r_{2z}+1)$ and $q_z^{j,2}$ by $(q_z^j+1,q_z^j-1)$, for $1 \leq z \leq \frac{l+s-2}{2}$ and $1 \leq j \leq y_z$, whenever $2r_{2z+1}> 2r_{2z}$; and then  replacing $(2r_{1}, 0)$ by $(2r_{1}-1, 1)$ and  $q_0^{j,2}$ by $(q_0^j+1,q_0^j-1)$ for $1 \leq j \leq y_0$, if $2r_1\neq 0$.
And the partition 
\[
\left((\prod_{i=1}^u p_1^2)[(\prod_{j=1}^v q_j^2) (2r_{l+k}+1)\prod_{w=1}^{l+k-1} (2r_w)]_{\SO_{2n^*+1}}\right)^{\SO_{2n+1}}
\]
can be obtained from $[(\prod_{i=1}^u p_1^2)[(\prod_{j=1}^v q_j^2) (2r_{l+k}+1)\prod_{w=1}^{l+k-1} (2r_w)]_{\SO_{2n^*+1}}]$ via replacing $p_z^{i,2}$ by $(p_z^i+1,p_z^i-1)$ for $1 \leq z \leq \frac{l+s-2}{2}$ and $1 \leq i \leq x_z$, whenever $2r_{2z+1}> 2r_{2z}$; and then  replacing $p_0^{i,2}$ by $(p_0^i+1,p_0^i-1)$ for $1 \leq i \leq x_0$, if $2r_1\neq 0$.
Hence \eqref{keylemma5equ22} holds in this case.

\quad

{\bf Case (1-b):} $q_v \leq 2r_{l+k}$, $p_u > 2r_{l+k}$. We have 
\begin{align*}
\begin{split}
&\left([(\prod_{i=1}^u p_1^2)(\prod_{j=1}^v q_j^2) \prod_{w=1}^{l+k} (2r_w)]^+\right)_{\SO_{2n+1}}\\
&\qquad=\left[(p_u+1)p_u(\prod_{i=1}^{u-1} p_1^2)(\prod_{j=1}^v q_j^2) \prod_{w=1}^{l+k} (2r_w)\right]_{\SO_{2n+1}}.
\end{split}
\end{align*}
To carry out the $\SO_{2n+1}$-collapse, we also need to list all the different even $p_i$'s between $p_u$ and $2r_{l+k}$ as 
\begin{align*}
   &\, p_u > p_{l+k}^1 > p_{l+k}^2 > \cdots > p_{l+k}^{x_{l+k}} > 2r_{l+k}.
\end{align*}
The collapse $[(\prod_{i=1}^u p_1^2)(\prod_{j=1}^v q_j^2) (2r_{l+k}+1)\prod_{w=1}^{l+k-1} (2r_w)]_{\SO_{2n+1}}$ can be obtained from 
\[
[(\prod_{i=1}^u p_1^2)(\prod_{j=1}^v q_j^2) (2r_{l+k}+1)\prod_{w=1}^{l+k-1} (2r_w)]
\]
via  replacing $(p_u, 2r_{l+k})$ by $(p_u-1, 2r_{l+k}+1)$ if $p_u$ is even, $(p_u+1, 2r_{l+k})$ by $(p_u, 2r_{l+k}+1)$ if $p_u$ is odd, and $p_{l+k}^{i,2}$ by $(p_{l+k}^i+1,p_{l+k}^i-1)$, for $1 \leq i \leq x_{l+k}$; and replacing $(2r_{2z+1}, 2r_{2z})$ by $(2r_{2z+1}-1, 2r_{2z}+1)$, $p_z^{i,2}$ by $(p_z^i+1,p_z^i-1)$, and $q_z^{j,2}$ by $(q_z^j+1,q_z^j-1)$, for $1 \leq z \leq \frac{l+s-2}{2}$, $1 \leq i \leq x_z$, and $1 \leq j \leq y_z$, whenever $2r_{2z+1}> 2r_{2z}$; and finally replacing $(2r_{1}, 0)$ by $(2r_{1}-1, 1)$, $p_0^{i,2}$ by $(p_0^i+1,p_0^i-1)$, and $q_0^{j,2}$ by $(q_0^j+1,q_0^j-1)$, for $1 \leq i \leq x_0$ and  $1 \leq j \leq y_0$, if $2r_1\neq 0$.

On the other hand, 
we still have \eqref{keylemma5equ23}.
We obtain the partition $[(\prod_{j=1}^v q_j^2) (2r_{l+k}+1)\prod_{w=1}^{l+k-1} (2r_w)]_{\SO_{2n^*+1}}$ from 
$$[(\prod_{j=1}^v q_j^2) (2r_{l+k}+1)\prod_{w=1}^{l+k-1} (2r_w)]$$
via replacing  
$(2r_{2z+1}, 2r_{2z})$ by $(2r_{2z+1}-1, 2r_{2z}+1)$ and $q_z^{j,2}$ by $(q_z^j+1,q_z^j-1)$, for $1 \leq z \leq \frac{l+s-2}{2}$ and $1 \leq j \leq y_z$, whenever $2r_{2z+1}> 2r_{2z}$; and replacing $(2r_{1}, 0)$ by $(2r_{1}-1, 1)$ and  $q_0^{j,2}$ by $(q_0^j+1,q_0^j-1)$, for $1 \leq j \leq y_0$, if $2r_1\neq 0$.
And the partition 
\[
\left((\prod_{i=1}^u p_1^2)[(\prod_{j=1}^v q_j^2) (2r_{l+k}+1)\prod_{w=1}^{l+k-1} (2r_w)]_{\SO_{2n^*+1}}\right)^{\SO_{2n+1}}
\]
can be obtained from $[(\prod_{i=1}^u p_1^2)[(\prod_{j=1}^v q_j^2) (2r_{l+k}+1)\prod_{w=1}^{l+k-1} (2r_w)]_{\SO_{2n^*+1}}]$ via replacing
$p_u^2$ by $(p_u+1, p_u-1)$ if $p_u$ is even, $p_{l+k}^{i,2}$ by $(p_{l+k}^i+1,p_{l+k}^i-1)$, for $1 \leq i \leq x_{l+k}$; and replacing $p_z^{i,2}$ by $(p_z^i+1,p_z^i-1)$, for $1 \leq z \leq \frac{l+s-2}{2}$, $1 \leq i \leq x_z$, whenever $2r_{2z+1}> 2r_{2z}$; and finally replacing $p_0^{i,2}$ by $(p_0^i+1,p_0^i-1)$, $1 \leq i \leq x_0$, if $2r_1\neq 0$.
Hence \eqref{keylemma5equ22} holds in this case.

\quad

{\bf Case (2-a):} $q_v > 2r_{l+k}$, $p_u \leq q_v$. We have 
\begin{align*}
\begin{split}
&\left([(\prod_{i=1}^u p_1^2)(\prod_{j=1}^v q_j^2) \prod_{w=1}^{l+k} (2r_w)]^+\right)_{\SO_{2n+1}}\\
&\qquad= \left[(\prod_{i=1}^u p_1^2)(q_v+1)q_v(\prod_{j=1}^{v-1} q_j^2) \prod_{w=1}^{l+k} (2r_w)\right]_{\SO_{2n+1}}.
\end{split}
\end{align*}
To carry out the $\SO_{2n+1}$-collapse, we also need to list all the different even $p_i$'s and $q_j$'s between $q_v$ and $2r_{l+k}$ as 
\begin{align*}
   &\, q_v > p_{l+k}^1 > p_{l+k}^2 > \cdots > p_{l+k}^{x_{l+k}} > 2r_{l+k},\\
      &\, q_v > q_{l+k}^1 > q_{l+k}^2 > \cdots > q_{l+k}^{y_{l+k}} > 2r_{l+k}.
\end{align*}

The collapse 
\[
\left[(\prod_{i=1}^u p_1^2)(q_v+1)q_v(\prod_{j=1}^{v-1} q_j^2) \prod_{w=1}^{l+k} (2r_w)\right]_{\SO_{2n+1}}
\]
can be obtained from 
\[
[(\prod_{i=1}^u p_1^2)(q_v+1)q_v(\prod_{j=1}^{v-1} q_j^2) \prod_{w=1}^{l+k} (2r_w)]
\]
via  replacing $(q_v, 2r_{l+k})$ by $(q_v-1, 2r_{l+k}+1)$ if $q_v$ is even, $(q_v+1, 2r_{l+k})$ by $(q_v, 2r_{l+k}+1)$ if $q_v$ is odd, $p_{l+k}^{i,2}$ by $(p_{l+k}^i+1,p_{l+k}^i-1)$, and $q_{l+k}^{j,2}$ by $(q_{l+k}^j+1,q_{l+k}^j-1)$, for $1 \leq i \leq x_{l+k}$ and $1 \leq j \leq y_{l+k}$; and replacing $(2r_{2z+1}, 2r_{2z})$ by $(2r_{2z+1}-1, 2r_{2z}+1)$, $p_z^{i,2}$ by $(p_z^i+1,p_z^i-1)$, and $q_z^{j,2}$ by $(q_z^j+1,q_z^j-1)$, for $1 \leq z \leq \frac{l+s-2}{2}$, $1 \leq i \leq x_z$, and $1 \leq j \leq y_z$, whenever $2r_{2z+1}> 2r_{2z}$; and finally replacing $(2r_{1}, 0)$ by $(2r_{1}-1, 1)$, $p_0^{i,2}$ by $(p_0^i+1,p_0^i-1)$, and $q_0^{j,2}$ by $(q_0^j+1,q_0^j-1)$, for $1 \leq i \leq x_0$ and $1 \leq j \leq y_0$, if $2r_1\neq 0$.

On the other hand, we have 
\begin{align}\label{keylemma5equ24}
\begin{split}
&\left((\prod_{i=1}^u p_1^2)([(\prod_{j=1}^v q_j^2) \prod_{w=1}^{l+k} (2r_w)]^+)_{\SO_{2n^*+1}}\right)^{\SO_{2n+1}}\\
    &\qquad=\left((\prod_{i=1}^u p_1^2)[(q_v+1)q_v(\prod_{j=1}^{v-1} q_j^2) \prod_{w=1}^{l+k} (2r_w)]_{\SO_{2n^*+1}}\right)^{\SO_{2n+1}}. 
    \end{split}
\end{align}
Then $[(q_v+1)q_v(\prod_{j=1}^{v-1} q_j^2) \prod_{w=1}^{l+k} (2r_w)]_{\SO_{2n^*+1}}$ can be obtained from 
$$[(q_v+1)q_v(\prod_{j=1}^{v-1} q_j^2) \prod_{w=1}^{l+k} (2r_w)]$$
via replacing $(q_v, 2r_{l+k})$ by $(q_v-1, 2r_{l+k}+1)$ if $q_v$ is even, $(q_v+1, 2r_{l+k})$ by $(q_v, 2r_{l+k}+1)$ if $q_v$ is odd, and $q_{l+k}^{j,2}$ by $(q_{l+k}^j+1,q_{l+k}^j-1)$, for $1 \leq j \leq y_{l+k}$; and  replacing 
$(2r_{2z+1}, 2r_{2z})$ by $(2r_{2z+1}-1, 2r_{2z}+1)$ and $q_z^{j,2}$ by $(q_z^j+1,q_z^j-1)$, for $1 \leq z \leq \frac{l+s-2}{2}$ and $1 \leq j \leq y_z$, whenever $2r_{2z+1}> 2r_{2z}$; and finally replacing $(2r_{1}, 0)$ by $(2r_{1}-1, 1)$ and  $q_0^{j,2}$ by $(q_0^j+1,q_0^j-1)$ for $1 \leq j \leq y_0$, if $2r_1\neq 0$.
And the partition 
\[
\left((\prod_{i=1}^u p_1^2)[(q_v+1)q_v(\prod_{j=1}^{v-1} q_j^2) \prod_{w=1}^{l+k} (2r_w)]_{\SO_{2n^*+1}}\right)^{\SO_{2n+1}}
\]
can be obtained from $(\prod_{i=1}^u p_1^2)[(q_v+1)q_v(\prod_{j=1}^{v-1} q_j^2) \prod_{w=1}^{l+k} (2r_w)]_{\SO_{2n^*+1}}$ via replacing
$p_u^2$ by $(p_u+1, p_u-1)$ if $p_u$ is even, and $p_{l+k}^{i,2}$ by $(p_{l+k}^i+1,p_{l+k}^i-1)$, for $1 \leq i \leq x_{l+k}$; and  replacing  $p_z^{i,2}$ by $(p_z^i+1,p_z^i-1)$ for $1 \leq z \leq \frac{l+s-2}{2}$ and $1 \leq i \leq x_z$, whenever $2r_{2z+1}> 2r_{2z}$; and finally replacing $p_0^{i,2}$ by $(p_0^i+1,p_0^i-1)$ for $1 \leq i \leq x_0$, if $2r_1\neq 0$.
Hence \eqref{keylemma5equ22} still holds in this case.

\quad

{\bf Case (2-b):} $q_v > 2r_{l+k}$, $p_u > q_v$. We have 
\begin{align*}
\begin{split}
&\left([(\prod_{i=1}^u p_1^2)(\prod_{j=1}^v q_j^2) \prod_{w=1}^{l+k} (2r_w)]^+\right)_{\SO_{2n+1}}\\
&\qquad \qquad = \left[(p_u+1)p_u(\prod_{i=1}^{u-1} p_1^2)(\prod_{j=1}^{v} q_j^2) \prod_{w=1}^{l+k} (2r_w)\right]_{\SO_{2n+1}}.
\end{split}
\end{align*}
To carry out the $\SO_{2n+1}$-collapse, we also need to list all the different even $p_i$'s and $q_j$'s between $p_u$ and $2r_{l+k}$ as 
\begin{align*}
   &\, p_u > p_{l+k}^1 > p_{l+k}^2 > \cdots > p_{l+k}^{x_{l+k}} > 2r_{l+k},\\
      &\, p_u > q_{l+k}^1 > q_{l+k}^2 > \cdots > q_{l+k}^{y_{l+k}} > 2r_{l+k}.
\end{align*}

The collapse $[(p_u+1)p_u(\prod_{i=1}^{u-1} p_1^2)(\prod_{j=1}^{v} q_j^2) \prod_{w=1}^{l+k} (2r_w)]_{\SO_{2n+1}}$ can be obtained from 
\[
[(p_u+1)p_u(\prod_{i=1}^{u-1} p_1^2)(\prod_{j=1}^{v} q_j^2) \prod_{w=1}^{l+k} (2r_w)]
\]
via replacing $(p_u, 2r_{l+k})$ by $(p_u-1, 2r_{l+k}+1)$ if $p_u$ is even, $(p_u+1, 2r_{l+k})$ by $(p_u, 2r_{l+k}+1)$ if $p_u$ is odd, $p_{l+k}^{i,2}$ by $(p_{l+k}^i+1,p_{l+k}^i-1)$, and  $q_{l+k}^{j,2}$ by $(q_{l+k}^j+1,q_{l+k}^j-1)$, for $1 \leq i \leq x_{l+k}$ and $1 \leq j \leq y_{l+k}$; and replacing $(2r_{2z+1}, 2r_{2z})$ by $(2r_{2z+1}-1, 2r_{2z}+1)$, $p_z^{i,2}$ by $(p_z^i+1,p_z^i-1)$, and $q_z^{j,2}$ by $(q_z^j+1,q_z^j-1)$, for $1 \leq z \leq \frac{l+s-2}{2}$, $1 \leq i \leq x_z$, and $1 \leq j \leq y_z$, whenever $2r_{2z+1}> 2r_{2z}$; and finally replacing $(2r_{1}, 0)$ by $(2r_{1}-1, 1)$, $p_0^{i,2}$ by $(p_0^i+1,p_0^i-1)$, and $q_0^{j,2}$ by $(q_0^j+1,q_0^j-1)$, for  $1 \leq i \leq x_0$ and $1 \leq j \leq y_0$, if $2r_1\neq 0$.

On the other hand, 
we still have \eqref{keylemma5equ24}. 
We obtain the partition 
$$
\left[(q_v+1)q_v(\prod_{j=1}^{v-1} q_j^2) \prod_{w=1}^{l+k} (2r_w)\right]_{\SO_{2n^*+1}}
$$ 
from 
$[(q_v+1)q_v(\prod_{j=1}^{v-1} q_j^2) \prod_{w=1}^{l+k} (2r_w)]$
via replacing $(q_v, 2r_{l+k})$ by $(q_v-1, 2r_{l+k}+1)$ if $q_v$ is even, $(q_v+1, 2r_{l+k})$ by $(q_v, 2r_{l+k}+1)$ if $q_v$ is odd, and $q_{l+k}^{j,2}$ by $(q_{l+k}^j+1,q_{l+k}^j-1)$ if $q_{l+k}^j\neq q_v$, for $1 \leq j \leq y_{l+k}$; and replacing 
$(2r_{2z+1}, 2r_{2z})$ by $(2r_{2z+1}-1, 2r_{2z}+1)$ and $q_z^{j,2}$ by $(q_z^j+1,q_z^j-1)$, for $1 \leq z \leq \frac{l+s-2}{2}$ and $1 \leq j \leq y_z$, whenever $2r_{2z+1}> 2r_{2z}$; and finally replacing $(2r_{1}, 0)$ by $(2r_{1}-1, 1)$ and  $q_0^{j,2}$ by $(q_0^j+1,q_0^j-1)$, for $1 \leq j \leq y_0$, if $2r_1\neq 0$.
And the partition 
\[
\left((\prod_{i=1}^u p_1^2)[(q_v+1)q_v(\prod_{j=1}^{v-1} q_j^2) \prod_{w=1}^{l+k} (2r_w)]_{\SO_{2n^*+1}}\right)^{\SO_{2n+1}}
\]
can be obtained from $(\prod_{i=1}^u p_1^2)[(q_v+1)q_v(\prod_{j=1}^{v-1} q_j^2) \prod_{w=1}^{l+k} (2r_w)]_{\SO_{2n^*+1}}$ via replacing
$p_u^2$ by $(p_u+1, p_u-1)$ if $p_u$ is even and $p_{l+k}^{i,2}$ by $(p_{l+k}^i+1,p_{l+k}^i-1)$, for $1 \leq i \leq x_{l+k}$; and  replacing  $p_z^{i,2}$ by $(p_z^i+1,p_z^i-1)$ for $1 \leq z \leq \frac{l+s-2}{2}$, $1 \leq i \leq x_z$, whenever $2r_{2z+1}> 2r_{2z}$; and finally replacing $p_0^{i,2}$ by $(p_0^i+1,p_0^i-1)$, for $1 \leq i \leq x_0$, if $2r_1\neq 0$.
Hence \eqref{keylemma5equ22} holds in this case. 

The proof of Lemma \ref{keylemma5} has been completed for $G_n=\SO_{2n+1}$.

\subsection{Proof of Lemma \ref{keylemma5}, $G_n=\RO_{2n}$}

By similar arguments as in the proof of the $\RO_{2n}$-case of Lemma \ref{keylemma2}, we only need to show that 
\begin{align}\label{keylemma5equ31}
\begin{split}
&\left([(\prod_{(\chi,m,\alpha) \in \textbf{e}} m^2)(\prod_{j=1}^t n_j^2) (\prod_{i=1}^l (2m_i+1)\prod_{s=1}^k (2n_s+1))]^{t}\right)_{\RO_{2n}}\\
&\quad=\left((\prod_{(\chi, m, \alpha) \in \textbf{e}} m^2)^t+ ([(\prod_{j=1}^t n_j^2) (\prod_{i=1}^l (2m_i+1)\prod_{s=1}^k (2n_s+1))]^{t})_{\RO_{2n^*}}\right)_{\RO_{2n}}.
\end{split}
\end{align}
For any given partition $\ul{p}=[p_r \cdots p_1]$ with $p_r \geq \cdots \geq p_1$, recall that $\ul{p}^+=[(p_r+1) \cdots p_1]$ and  $\ul{p}^-=[p_r \cdots (p_1-1)]$.
Rewrite the partition $[\prod_{(\chi, m, \alpha) \in \textbf{e}} m^2]$ as
$[p_u^2p_{u-1}^2\cdots p_1^2]$ with $p_u \geq p_{u-1} \geq \cdots \geq p_1$; and the partition 
$[\prod_{i=1}^t n_i^2]$ as
$[q_{v}^2q_{v-1}^2\cdots q_1^2]$ with $q_{v} \geq q_{v-1} \geq \cdots \geq q_1$.
And rewrite the partition $[(\prod_{i=1}^l (2m_i+1)\prod_{s=1}^k (2n_s+1))]$ as $[\prod_{w=1}^{l+k} (2r_w+1)]$
with $r_{l+k} \geq r_{l+k-1} \geq \cdots \geq r_{1} > 0$. Then, \eqref{keylemma5equ31} becomes
\begin{align}\label{keylemma5equ32}
\begin{split}
&\left([(\prod_{i=1}^u p_i^2)(\prod_{j=1}^v q_j^2) \prod_{w=1}^{l+k} (2r_w+1)]^{t}\right)_{\RO_{2n}}\\
&\qquad= \left((\prod_{i=1}^u p_i^2)^t+([(\prod_{j=1}^v q_j^2) \prod_{w=1}^{l+k} (2r_w+1)]^t)_{\RO_{2n^*}}\right)_{\RO_{2n}}.
\end{split}
\end{align}

As in the proof of the $\RO_{2n}$-case of Lemma \ref{keylemma2}, it is easy to see that
\begin{align*}
[(\prod_{i=1}^u p_i^2)(\prod_{j=1}^v q_j^2) \prod_{w=1}^{l+k} (2r_w+1)]^{t}
   = [(\prod_{i=1}^u p_i^2)(\prod_{j=1}^v q_j^2)]^t + ([\prod_{w=1}^{l+k}(2r_w+1)])^t 
\end{align*}
is a partition of the following form
$$[p_{l+k}^1 \cdots p_{l+k}^{2r_1+1} (\prod_{j=1}^{{l+k}-1} p_{j}^1\cdots p_{j}^{2r_{{l+k}+1-j}-2r_{{l+k}-j}})p_0^1 \cdots p_0^{r_0}],$$
where $p_{l+k}^{i}$ with $1\leq i \leq 2r_1+1$, $p_{2j}^i$ with  $1\leq i \leq 2r_{{l+k}+1-2j}-2r_{{l+k}-2j}$ and $1\leq j \leq \frac{{l+k}-2}{2}$, and $p_0^k$ with $1 \leq k \leq r_0$, 
are all even; and $p_{2j+1}^i$ with $1\leq i \leq 2r_{{l+k}-2j}-2r_{{l+k}-2j-1}$ and $0\leq j \leq \frac{{l+k}-2}{2}$ are all odd; and finally 
$p_{l+k}^1 \geq \cdots \geq p_{l+k}^{2r_1+1}$, $p_j^1 \geq \cdots \geq p_j^{2r_{{l+k}+1-j}-2r_{{l+k}-j}} \geq p_{j-1}^1$
with $1 \leq j \leq {l+k}-1$, and $p_0^1 \geq \cdots \geq p_0^{r_0}>0$. 

Following the recipe on carrying out the $\RO_{2n}$-collapse (\cite[Lemma 6.3.8]{CM93}), we obtain that 
$([(\prod_{i=1}^u p_i^2)(\prod_{j=1}^v q_j^2) \prod_{w=1}^{l+k} (2r_w+1)]^{t})_{\RO_{2n}}$ is equal to 
\begin{align}\label{eventotal}
\begin{split}
    &[(p_{l+k}^1 \cdots p_{l+k}^{2r_1})_O (p_{l+k}^{2r_1+1}-1)\prod_{j=0}^{\frac{{l+k}-2}{2}} p_{2j+1}^1\cdots p_{2j+1}^{2r_{{l+k}-2j}-2r_{{l+k}-2j-1}}\\
    &\quad\prod_{j=1}^{\frac{{l+k}-2}{2}}(p_{2j}^1+1)(p_{2j}^2\cdots p_{2j}^{2r_{{l+k}+1-2j}-2r_{{l+k}-2j}-1})_O(p_{2j}^{2r_{{l+k}+1-2j}-2r_{{l+k}-2j}}-1)\\
    &\quad\quad(p_0^1+1)(p_0^2 \cdots p_0^{r_0})_O].
    \end{split}
\end{align}
Similarly, we have 
\begin{align*}
   [(\prod_{j=1}^v q_j^2) \prod_{w=1}^{l+k} (2r_w+1)]^{t}
   =  [(\prod_{j=1}^v q_j^2)]^t + [\prod_{w=1}^{l+k}(2r_w+1)]^t \end{align*}
is a partition of the following form
$$[q_{l+k}^1 \cdots q_{l+k}^{2r_1+1} (\prod_{j=1}^{{l+k}-1} q_{j}^1\cdots q_{j}^{2r_{{l+k}+1-j}-2r_{{l+k}-j}})q_0^1 \cdots q_0^{r_0}],$$
where $q_{l+k}^{i}$ with $1\leq i \leq 2r_1+1$, $q_{2j}^i$ with  $1\leq i \leq 2r_{{l+k}+1-2j}-2r_{{l+k}-2j}$ and $1\leq j \leq \frac{{l+k}-2}{2}$, and $q_0^k$ with  $1 \leq k \leq r_0$, 
are all even; and $q_{2j+1}^i$ with $1\leq i \leq 2r_{{l+k}-2j}-2r_{{l+k}-2j-1}$ and $0\leq j \leq \frac{{l+k}-2}{2}$ 
are all odd; and finally $q_{l+k}^1 \geq \cdots \geq q_{l+k}^{2r_1+1}$, $q_j^1 \geq \cdots \geq q_j^{2r_{{l+k}+1-j}-2r_{{l+k}-j}} \geq q_{j-1}^1$ with $1 \leq j \leq {l+k}-1$, and $q_0^1 \geq \cdots \geq q_0^{r_0} \geq 0$. (Note that we are adding $0$'s at the end of the partition if necessary.)

Following the recipe on carrying out the $\RO_{2n^*}$-collapse (\cite[Lemma 6.3.8]{CM93}), we obtain that the partition 
$([(\prod_{j=1}^v q_j^2) \prod_{w=1}^{l+k} (2r_w+1)]^{t})_{\RO_{2n^*}}$ is equal to 
\begin{align*}
    &[(q_{l+k}^1 \cdots q_{l+k}^{2r_1})_\RO (q_{l+k}^{2r_1+1}-1)\prod_{j=0}^{\frac{{l+k}-2}{2}} q_{2j+1}^1\cdots q_{2j+1}^{2r_{{l+k}-2j}-2r_{{l+k}-2j-1}}\\
    &\quad\prod_{j=1}^{\frac{{l+k}-2}{2}}(q_{2j}^1+1)(q_{2j}^2\cdots q_{2j}^{2r_{{l+k}+1-2j}-2r_{{l+k}-2j}-1})_\RO(q_{2j}^{2r_{{l+k}+1-2j}-2r_{{l+k}-2j}}-1)\\
    &\quad\quad(q_0^1+1)(q_0^2 \cdots q_0^{r_0})_\RO].
\end{align*}
Without loss of generality and adding $0$'s if necessary, we can assume that $(\prod_{i=1}^u p_i^2)^t$ is an even partition having the following form
$$[s_{l+k}^1 \cdots s_{l+k}^{2r_1+1} (\prod_{j=1}^{{l+k}-1} s_{j}^1\cdots s_{j}^{2r_{{l+k}+1-j}-2r_{{l+k}-j}})s_0^1 \cdots s_0^{r_0}].$$
Then the partition 
$(\prod_{i=1}^u p_i^2)^t+([(\prod_{j=1}^v q_j^2) \prod_{w=1}^{l+k} (2r_w+1)]^t)_{\RO_{2n^*}}$ is equal to 
\begin{align*}
   &[((s_{l+k}^1 \cdots s_{l+k}^{2r_1})+(q_{l+k}^1 \cdots q_{l+k}^{2r_1})_\RO) (t_{l+k}^{2r_1+1}-1)\prod_{j=0}^{\frac{{l+k}-2}{2}} t_{2j+1}^1\cdots t_{2j+1}^{2r_{{l+k}-2j}-2r_{{l+k}-2j-1}}\\
    &\quad\prod_{j=1}^{\frac{{l+k}-2}{2}}(t_{2j}^1+1)((s_{2j}^2\cdots s_{2j}^{2r_{{l+k}+1-2j}-2r_{{l+k}-2j}-1})+(q_{2j}^2\cdots q_{2j}^{2r_{{l+k}+1-2j}-2r_{{l+k}-2j}-1})_\RO)\\
    &\quad\quad (t_{2j}^{2r_{{l+k}+1-2j}-2r_{{l+k}-2j}}-1)(t_0^1+1)((s_0^2 \cdots s_0^{r_0})+(q_0^2 \cdots q_0^{r_0})_\RO)]，
\end{align*}
where all the $t$-term's are the summation of the corresponding $q$-terms and $s$-terms. It is clear that the $t$-terms are exactly the corresponding $p$-terms
in \eqref{eventotal}. 

Now, following the recipe on carrying out the $\RO_{2n^*}$-collapse (\cite[Lemma 6.3.8]{CM93}), we obtain that
the partition 
\[
\left((\prod_{i=1}^u p_i^2)^t+([(\prod_{j=1}^v q_j^2) \prod_{w=1}^{l+k} (2r_w+1)]^t)_{\RO_{2n^*}}\right)_{\RO_{2n}}
\]
is equal to 
\begin{align*}
  [((s_{l+k}^1 \cdots s_{l+k}^{2r_1})+(q_{l+k}^1 \cdots q_{l+k}^{2r_1})_\RO)_\RO (t_{l+k}^{2r_1+1}-1)\prod_{j=0}^{\frac{{l+k}-2}{2}} t_{2j+1}^1\cdots t_{2j+1}^{2r_{{l+k}-2j}-2r_{{l+k}-2j-1}}\\
\prod_{j=1}^{\frac{{l+k}-2}{2}}(t_{2j}^1+1)((s_{2j}^2\cdots s_{2j}^{2r_{{l+k}+1-2j}-2r_{{l+k}-2j}-1})+(q_{2j}^2\cdots q_{2j}^{2r_{{l+k}+1-2j}-2r_{{l+k}-2j}-1})_\RO)_\RO\\
(t_{2j}^{2r_{{l+k}+1-2j}-2r_{{l+k}-2j}}-1)(t_0^1+1)((s_0^2 \cdots s_0^{r_0})+(q_0^2 \cdots q_0^{r_0})_\RO)_\RO].
\end{align*}
Since $s$-terms are all even, by \cite[Lemma 3.1]{Ac03}, we have 
\begin{align*}
    ((s_{l+k}^1 \cdots s_{l+k}^{2r_1})+(q_{l+k}^1 \cdots q_{l+k}^{2r_1})_\RO)_\RO
    =&\, (s_{l+k}^1 \cdots s_{l+k}^{2r_1})_O+(q_{l+k}^1 \cdots q_{l+k}^{2r_1})_\RO\\
    =&\,((s_{l+k}^1 \cdots s_{l+k}^{2r_1})+(q_{l+k}^1 \cdots q_{l+k}^{2r_1}))_\RO\\
    =&\,(p_{l+k}^1 \cdots p_{l+k}^{2r_1})_\RO,
\end{align*}
\begin{align*}
    &((s_{2j}^2\cdots s_{2j}^{2r_{{l+k}+1-2j}-2r_{{l+k}-2j}-1})+(q_{2j}^2\cdots q_{2j}^{2r_{{l+k}+1-2j}-2r_{{l+k}-2j}-1})_\RO)_\RO\\
    &\qquad= (s_{2j}^2\cdots s_{2j}^{2r_{{l+k}+1-2j}-2r_{{l+k}-2j}-1})_\RO+(q_{2j}^2\cdots q_{2j}^{2r_{{l+k}+1-2j}-2r_{{l+k}-2j}-1})_\RO\\
    &\qquad\qquad=((s_{2j}^2\cdots s_{2j}^{2r_{{l+k}+1-2j}-2r_{{l+k}-2j}-1})+(q_{2j}^2\cdots q_{2j}^{2r_{{l+k}+1-2j}-2r_{{l+k}-2j}-1}))_\RO\\
    &\qquad\qquad\qquad=(p_{2j}^2\cdots p_{2j}^{2r_{{l+k}+1-2j}-2r_{{l+k}-2j}-1})_\RO,
\end{align*}
and
\begin{align*}
    ((s_0^2 \cdots s_0^{r_0})+(q_0^2 \cdots q_0^{r_0})_\RO)_\RO
    =&\, (s_0^2 \cdots s_0^{r_0})_O+(q_0^2 \cdots q_0^{r_0})_\RO\\
    =&\, ((s_0^2 \cdots s_0^{r_0})+(q_0^2 \cdots q_0^{r_0}))_\RO\\
    =&\, (p_0^2 \cdots p_0^{r_0})_\RO.
\end{align*}
Hence the partition 
$((\prod_{i=1}^u p_i^2)^t+([(\prod_{j=1}^v q_j^2) \prod_{w=1}^{l+k} (2r_w+1)]^t)_{O_{2n^*}})_{\RO_{2n}}$ is equal to 
\begin{align*}
  &[(p_{l+k}^1 \cdots p_{l+k}^{2r_1})_\RO (p_{l+k}^{2r_1+1}-1)\prod_{j=0}^{\frac{{l+k}-2}{2}} p_{2j+1}^1\cdots p_{2j+1}^{2r_{{l+k}-2j}-2r_{{l+k}-2j-1}}\\
    &\qquad\prod_{j=1}^{\frac{{l+k}-2}{2}}(p_{2j}^1+1)(p_{2j}^2\cdots p_{2j}^{2r_{{l+k}+1-2j}-2r_{{l+k}-2j}-1})_\RO\\
    &\qquad\qquad(p_{2j}^{2r_{{l+k}+1-2j}-2r_{{l+k}-2j}}-1)(p_0^1+1)(p_0^2 \cdots p_0^{r_0})_\RO],
\end{align*}
which is exactly equal to $([(\prod_{i=1}^u p_i^2)(\prod_{j=1}^v q_j^2) \prod_{w=1}^{l+k} (2r_w+1)]^{t})_{\RO_{2n}}$ by \eqref{eventotal}. 
Therefore, we have shown \eqref{keylemma5equ31}, hence complete the proof of $G_n=\RO_{2n}$-case of Lemma \ref{keylemma5}.

%%%%%%%%%%%%%%%%%%%%%%%%%%%%%%%%%%%%%%
%%%%%%%%%%%%%%%%%%%%%%%%%%%%%%%%%%%%%%


\begin{thebibliography}{dihuajiang}
\bibitem[Ac03]{Ac03}
P. Achar,
{\it An order-reversing duality map for conjugacy classes in Lusztig's canonical quotient.}
Transform. Groups \textbf{8} (2003), no. 2, 107--145.

\bibitem[Ar13]{Ar13}
J. Arthur,
{\it The endoscopic classification of representations: Orthogonal and Symplectic groups.}
Colloquium Publication Vol. \textbf{61}, 2013,
American Mathematical Society.

\bibitem[Bar10]{Bar10}
D. Barbasch,
{\it The unitary spherical spectrum for split classical groups.}
J. Inst. Math. Jussieu \textbf{9} (2010), no. 2, 265--356.

\bibitem[BMSZ20]{BMSZ20}
D. Barbasch, J. Ma, B. Sun, and C. Zhu
{\it On the notion of metaplectic Barbasch-Vogan duality.}
arXiv:2010.16089, 2020.

\bibitem[BV85]{BV85}
D. Barbasch and D. Vogan,
{\it Unipotent representations of complex semisimple groups.}
Ann. of Math. (2) \textbf{121} (1985), no. 1, 41--110.



%\bibitem[BZ76]{BZ76}
%Bernstein, I.N., Zelevinski, A.V.
%{\it Representations of {$GL(n,F)$}, where {$F$} is a non-archimedean local field}.
%Russ. Math. Surv. 31(3), 1--68 (1976).

%\bibitem[Bre09]{Bre09}
%Brenner, Eliot.
%{\it Analytic properties of residual Eisenstein series. I.}
%J. Ramanujan Math. Soc. 24 (2009), no. 4, 359--413.


\bibitem[CZ21a]{CZ21a}
R. Chen and J. Zou,
{\it Arthur multiplicity formula for even orthogonal and unitary groups}, Preprint 2021,  arXiv:2103.07956.

\bibitem[CZ21b]{CZ21b}
R. Chen and J. Zou,
{\it Theta correspondence and Arthur packets}, Preprint 2021, arXiv:2104.12354.

\bibitem[CMO21]{CMO21}
D. Ciubotaru, L. Mason-Brown, and E. Okada,
{\it Some unipotent Arthur packets for redutive $p$-adic groups I.}
Preprint. 2021. arXiv:2112.14354. 

%\bibitem[CKPSS04] {CKPSS04}
%J. Cogdell, H. Kim, I. Piatetski-Shapiro and F. Shahidi
%{\it Functoriality for the classical groups.}
%Publ. Math. Inst. Hautes Etudes Aci. No. 99 (2004),
%163--233.

\bibitem[CM93]{CM93}
D. Collingwood and W. McGovern,
{\it Nilpotent orbits in semisimple Lie algebras.}
Van Nostrand Reinhold Mathematics Series. Van Nostrand Reinhold Co., New York, 1993. xiv+186 pp.

\bibitem[GGP12]{GGP12}
W. Gan, B. Gross and D. Prasad,
{\it Symplectic local root numbers, central critical $L$-values, and restriction problems in the representation theory of classical groups}.
Ast\'erisque vol. \textbf{346}, 1--109, (2012).

%\bibitem[G03]{G03}
%Ginzburg, D.
%{\it A construction of CAP representations in classical groups.} Int. Math. Res. Not. 2003, no. 20, 1123--1140.
%
%\bibitem[G06]{G06}
%D. Ginzburg,
%{\it Certain conjectures relating unipotent orbits to automorphic representations.}
%Israel J. Math. 151 (2006), 323--355.

\bibitem[G12]{G12}
D. Ginzburg,
{\it Constructing automorphic representations in split classical groups}. Electron. Res. Announc. Math. Sci. \textbf{19} (2012), 18--32.

\bibitem[GJR04]{GJR04}
D. Ginzburg, D. Jiang and S. Rallis,
{\it On the nonvanishing of the central value of the Rankin-Selberg L-functions.} J. Amer. Math. Soc. \textbf{17} (2004), no. 3, 679--722 (electronic).

\bibitem[GJRS11]{GJRS11}
D. Ginzburg, D. Jiang, S. Rallis and D. Soudry,
{\it L-functions for symplectic groups using Fourier-Jacobi models.}
Arithmetic geometry and automorphic forms, 183--207, Adv. Lect. Math. (ALM), \textbf{19}, Int. Press, Somerville, MA, 2011.

%\bibitem[GJS12]{GJS12}
%D. Ginzburg, D. Jiang and D. Soudry,
%{\it On correspondences between certain automorphic forms on {$Sp_{4n}$} and {$\widetilde{Sp}_{2n}$}.}
%Israel J. of Math. 192 (2012), 951--1008.

\bibitem[GPSR97]{GPSR97}
D. Ginzburg, I. Piatetski-Shapiro, and S. Rallis, 
{\it $L$-functions for the orthogonal group}. 
Mem. Amer. Math. Soc. 128 (1997), no. 611.

% \bibitem[GRS03]{GRS03}
% D. Ginzburg, S. Rallis and D. Soudry,
% {\it On Fourier coefficients of automorphic forms of symplectic groups.}
% Manuscripta Math. \textbf{111} (2003), no. 1, 1--16.

% \bibitem[GRS05]{GRS05}
% D. Ginzburg, S. Rallis and D. Soudry,
% {\it Contruction of CAP representations for Symplectic groups using the descent method.}
% Automorphic representations, L-functions and applications: progress and prospects, 193--224, Ohio State Univ. Math. Res. Inst. Publ., \textbf{11}, de Gruyter, Berlin, 2005.

\bibitem[GRS11]{GRS11}
D. Ginzburg, S. Rallis and D. Soudry,
{\it The descent map from automorphic representations of {${\rm GL}(n)$} to classical groups.} World Scientific, Singapore, 2011. v+339 pp.

\bibitem[GGS17]{GGS17}
R. Gomez, D. Gourevitch and S. Sahi,
{\it Generalized and degenerate Whittaker models}.
Compositio Math. \textbf{153} (2017) 223–-256.

% \bibitem[HL20]{HL20}
% J. Hundley and B. Liu,
% {\it On automorphic descent from $GL(7)$ to $G_2$.}
% Preprint. 2020. 

%\bibitem[Jan96]{Jan96}
%C. Jantzen,
%{\it Reducibility of certain representations for symplectic and odd-orthogonal groups.} Compositio Math. 104 (1996), no. 1, 55--63.

\bibitem[J14]{J14}
D. Jiang,
{\it Automorphic Integral transforms for classical groups I: endoscopy correspondences}.
Automorphic Forms: L-functions and related geometry: assessing the legacy of I.I. Piatetski-Shapiro, 179--242,
Comtemp. Math. \textbf{614}, 2014, AMS.

\bibitem[JL13]{JL13}
D. Jiang and B. Liu,
{\it On Fourier coefficients of automorphic forms of ${\rm GL}(n)$}.
Int. Math. Res. Not. 2013 (17): 4029--4071.

% \bibitem[JL15]{JL15}
% D. Jiang and B. Liu,
% {\it On special unipotent orbits and Fourier coefficients for automorphic forms on symplectic groups}.
% J. Number Theory \textbf{146} (2015), 343--389.

\bibitem[JL16]{JL16}
D. Jiang and B. Liu,
{\it Arthur parameters and Fourier coefficients for automorphic forms on symplectic groups.}
Ann. Inst. Fourier (Grenoble) 66 (2016), no. 2, 477--519.

\bibitem[JL16a]{JL16a}
D. Jiang and B. Liu,
{\it Fourier coefficients for automorphic forms on quasisplit classical groups}. Advances in the theory of automorphic forms and their L-functions, 187--208, Contemp. Math., 664, Amer. Math. Soc., Providence, RI, 2016.

\bibitem[JLS16]{JLS16}
D. Jiang, B. Liu, and G. Savin, 
{\it Raising nilpotent orbits in wave-front sets}. Represent. Theory 20 (2016), 419--450.

% \bibitem[JLZ13]{JLZ13}
% D. Jiang, B. Liu and L. Zhang,
% {\it Poles of certain residual Eisenstein series of classical groups}.
% Pacific J. of Math. Vol. \textbf{264} (2013), No. 1, 83--123.


\bibitem[JZ14]{JZ14}
D. Jiang and L. Zhang,
{\it A product of tensor product $L$-functions for classical groups of hermitian type.}
Geom. Funct. Anal. \textbf{24} (2014), no. 2, 552--609.

\bibitem[JZ20]{JZ20}
D. Jiang and L. Zhang,
{\it Arthur parameters and cuspidal automorphic modules of classical groups.} Ann. of Math. (2) 191 (2020), no. 3, 739--827. 

\bibitem[KMSW14]{KMSW14}
T. Kaletha, A. Minguez, S. W. Shin, and P.-J. White
{\it Endoscopic classification of representations: Inner forms of unitary groups}. Preprint. 2014. 

%\bibitem[Ka87]{Ka87}
%N. Kawanaka,
%{\it Shintani lifting and Gelʹfand-Graev representations.}
%The Arcata Conference on Representations of Finite Groups (Arcata, Calif., 1986), 147--163, Proc. Sympos. Pure Math., 47, Part 1, Amer. Math. Soc., Providence, RI, 1987.

% \bibitem[Kud96]{Kud96}
% S. Kudla,
% {\it Note on the local theta correspondence}. Preprint, 1996.

%\bibitem[JNQ08]{JNQ08}
%Jiang, Dihua; Nien, Chufeng; Qin, Yujun
%{\it Local Shalika models and functoriality}.
%Manuscripta Math. 127 (2008), no. 2, 187–217.

%\bibitem[KS88]{KS88}
%Keys, C. David; Shahidi, Freydoon.
%{\it Artin $L$-functions and normalization of intertwining operators.}
%Ann. Sci. \'Ecole Norm. Sup. (4) 21 (1988), no. 1, 67--89.

%\bibitem[Kim05]{Kim05}
%Kim, Henry H.
%{\it On local L-functions and normalized intertwining operators.}
%Canad. J. Math. 57 (2005), no. 3, 535--597.

% \bibitem[Lang94]{Lang94}
% S. Lang,
% {\it Algebraic number theory.}
% Second edition. Graduate Texts in Mathematics, \textbf{110}. Springer-Verlag, New York, 1994. xiv+357 pp. ISBN: 0-387-94225-4

%\bibitem[L76]{L76}
%R. Langlands,
%{\it On the functional equations satisfied by Eisenstein series}. Springer Lecture Notes in Math. 544, 1976.

% \bibitem[Liu13]{Liu13}
% B. Liu,
% {\it On extension of Ginzburg-Jiang-Soudry correspondence to certain automorphic forms on $\Sp_{4mn}(\BA)$ and $\tilsp_{4mn+2n}(\BA)$}.
% Submitted 2013. arXiv:1309.6240.

\bibitem[LX21]{LX21}
B. Liu and B. Xu,
{\it On top Fourier coefficients of certain automorphic representations of $GL_n$.} Manuscripta Math. 164 (2021), no. 1--2, 1--22. 

%\bibitem[M98]{M98}
%C. M{\oe}glin,
%{\it Correspondance de Howe et front d'onde.}
%Adv. Math. 133 (1998), no. 2, 224--285.

\bibitem[MW87]{MW87}
C. M{\oe}glin and J.-P. Waldspurger,
{\it Modèles de Whittaker dégénérés pour des groupes p-adiques.}
Math. Z. \textbf{196} (1987), no. 3, 427--452.

%\bibitem[M08]{M08}
%Moeglin, C.
%{\it Formes automorphes de carr\'e int\'egrable noncuspidales}.
%Manuscripta Math. (2008).

%\bibitem[M11]{M11}
%Moeglin, C.
%{\it Image des op\'erateurs d'entrelacements normalis\'es et p\^oles des s\'eries d'Eisenstein}. 2011.

%\bibitem[MW89]{MW89}
%Moeglin, C.; Waldspurger, J.-P.
%{\it  Le spectre residuel de {${\rm GL}(n)$.}} Ann. Sci. \'Ecole Norm. Sup. (4) 22 (1989), no. 4, 605--674.

%\bibitem[MW95]{MW95}
%C. Moeglin and J.-P. Waldspurger,
%{\it Spectral decomposition and Eisenstein series.} Cambridge Tracts in Mathematics, 113. Cambridge University Press, Cambridge, 1995.

\bibitem[Mok15]{Mok15}
C. Mok,
{\it Endoscopic classification of representations of quasi-split unitary groups}. 
Mem. Amer. Math. Soc. 235 (2015), no. 1108, vi+248 pp.

%\bibitem[M01]{M01}
%G. Muic,
%{\it A proof of Casselman-Shahidi's conjecture for quasi-split classical groups.}
% Canad. Math. Bull. 44 (2001), no. 3, 298--312.


\bibitem[M06]{M06}
G. Muic,
{\it On the non-unitary unramified dual for classical p-adic groups.}
Trans. Amer. Math. Soc. \textbf{358} (2006), no. 10, 4653--4687 (electronic).

\bibitem[M07]{M07}
G. Muic,
{\it On certain classes of unitary representations for split classical groups.}
 Canad. J. Math. \textbf{59} (2007), no. 1, 148--185.

\bibitem[MT11]{MT11}
G. Muic and M. Tadic,
{\it Unramified unitary duals for split classical p-adic groups; the topology and isolated representations.}
On certain L-functions, 375--438, Clay Math. Proc., \textbf{13}, Amer. Math. Soc., Providence, RI, 2011.

\bibitem[N11]{N11}
M. Nevins,
{\it On nilpotent orbits of {${\rm SL}(n)$} and {${\rm Sp}(2n)$} over a local non-Archimedean field.} Algebr. Represent. Theory \textbf{14} (2011), no. 1, 161--190.

\bibitem[Oka21]{Oka21}
E. T. Okada,
{\it The wavefront set of spherical Arthur
representations}. Preprint. 2021. arXiv:2107.10591. 

%\bibitem[OS08]{OS08}
%Offen, Omer; Sayag,  Eitan.
%{\it Global mixed periods and local Klyachko models for the general linear group.} Int. Math. Res. Not. IMRN, (1): Art. ID rnm 136, 25, 2008.

% \bibitem[O71]{O71}
% O. T. O'Meara,
% {\it Introduction to quadratic forms.} Second printing, corrected. Die Grundlehren der mathematischen Wissenschaften, Band \textbf{117}. Springer-Verlag, New York-Heidelberg, 1971. xi+342 pp.

\bibitem[PS79]{PS79}
I. Piatetski-Shapiro,
{\it Multiplicity one theorems}. Automorphic forms, rep-
resentations and L-functions, Part 1, pp. 209--212, Proc. Sympos. Pure
Math., XXXIII, Amer. Math. Soc., Providence, R.I., 1979.

\bibitem[Sh90]{Sh90}
F. Shahidi,
{\it A proof of Langlands' conjecture on Plancherel measures; complementary series for $p$-adic groups}. Ann. of Math. (2) 132 (1990), no. 2, 273--330.

\bibitem[Sh10]{Sh10}
F. Shahidi,
{\it Eisenstein series and automorphic L-functions,}
volume 58 of American Mathematical Society Colloquium Publications. American Mathematical Society, Providence, RI, 2010. ISBN 978-0-8218- 4989-7.

\bibitem[S74]{S74}
J. Shalika,
{\it The multiplicity one theorem for $\GL_n$}. Ann. of Math. (2) \textbf{100}
(1974), 171--193.

\bibitem[Sp82]{Sp82}
N. Spaltensein, 
{\it Classes Unipotentes et Sous-Groupes de Borel}, Lecture Notes in Math.
 946, Springer, Berlin-Heidelberg-New York, 1982.

\bibitem[Tad86]{Tad86}
M. Tadic,
{\it Spherical unitary dual of general linear group over non-Archimedean local field.}
Ann. Inst. Fourier (Grenoble) 36 (1986), no. 2, 47--55.

\bibitem[W01]{W01}
J.-L. Waldspurger,
{\it Int\'egrales orbitales nilpotentes et endoscopie pour les groupes classiques non ramifi\'es}. Ast\'erisque \textbf{269}, 2001.
\end{thebibliography}
\end{document}